\newtheorem{thmintro}{Theorem}
\newtheorem{corintro}[thmintro]{Corollary}
\theoremstyle{definition}
\theoremstyle{plain}
\newtheorem{theorem}{Theorem}[section]
\newtheorem{proposition}[theorem]{Proposition}
\newtheorem{lemma}[theorem]{Lemma}
\theoremstyle{definition}
\newtheorem{definition}[theorem]{Definition}
\newtheorem{convention}[theorem]{Convention}
\newtheorem{example}[theorem]{Example}
\theoremstyle{remark}
\newtheorem{remark}[theorem]{Remark}
\renewcommand{\phi}{\varphi}
\renewcommand{\epsilon}{\varepsilon}
\newcommand{\FF}{\mathbb{F}}
\newcommand{\Cp}{\mathcal{C}_{\epsilon}}
\newcommand{\Cm}{\mathcal{C}_{-\epsilon}}
\newcommand{\C}{\mathcal{C}}
\renewcommand{\P}{\mathcal{P}}
\DeclareMathOperator{\Aut}{Aut}
\DeclareMathOperator{\proj}{proj}
\DeclareMathOperator{\Fix}{Fix}
\DeclareMathOperator{\Stab}{Stab}
\DeclareMathOperator{\equal}{=}
\numberwithin{equation}{section} 
\begin{document}
	
\title{Isomorphisms of Groups of Kac-Moody Type Over $\FF_2$}

\author{Sebastian Bischof}

\thanks{email: sebastian.bischof@math.uni-paderborn.de}

\thanks{UCLouvain, IRMP, Chemin du Cyclotron 2, 1348 Louvain-la-Neuve, Belgium}

\thanks{Keywords: Groups of Kac-Moody type, Isomorphism problem, Buildings}

\thanks{Mathematics Subject Classification 2020: 20E36, 20E42, 22E65, 51E24}

\begin{abstract}
	In \cite{CM06} Caprace and M\"uhlherr solved the isomorphism problem for Kac-Moody groups of non-spherical type over finite fields of cardinality at least $4$. In this paper we solve the isomorphism problem for RGD-systems (e.g.\ Kac-Moody groups) over $\FF_2$ whose type is $2$-complete and $\tilde{A}_2$-free.
\end{abstract}

\maketitle

\section{Introduction}

Chevalley groups of type $\Phi$ (over fields) are abstract groups which come equipped with a family of subgroups $(U_{\alpha})_{\alpha \in \Phi}$ indexed by the root system $\Phi$ and satisfying the axioms of an \emph{RGD-system}. It is known that each automorphism of a Chevalley group (of irreducible type and over a perfect field) can be written as a product of an inner, a diagonal, a graph and a field automorphism (cf.\ \cite[Theorem $30$]{St16}). As a consequence of this result, any automorphism of a Chevalley group induces an automorphism of its underlying RGD-system.

Kac-Moody groups are infinite-dimensional generalizations of Chevalley groups. While in Chevalley groups opposite Borel subgroups are conjugate, this is no longer true in Kac-Moody groups of non-spherical type and a new class of automorphisms arises, the so-called \emph{sign automorphisms}. It was conjectured in \cite{KP87} that any automorphism of a Kac-Moody group over an algebraically closed field of characteristic $0$ can be written as a product of an inner, a diagonal, a graph, a field and a sign automorphism. This result was shown to be true in \cite{CM05b} by Caprace and M\"uhlherr for algebraically closed fields of any characteristic (and generalized by Caprace in \cite{Caprace_Diss}). They deduced this result from the solution of the isomorphism problem for Kac-Moody groups or, more generally, for RGD-systems.

\subsection*{Isomorphism problem for RGD-systems}

Given a Coxeter system $(W, S)$, an \emph{RGD-system of type $(W, S)$} is a pair $\mathcal{D} = \left(G, \left( U_{\alpha} \right)_{\alpha \in \Phi} \right)$ consisting of a group $G$ and a family of subgroups indexed by the set of roots $\Phi$ of $(W, S)$ satisfying some axioms (we refer to Section \ref{Section: Isomorphisms of RGD-systems} for the precise definition). The group $G$ is also denoted by $G^{\mathcal{D}}$ and the \emph{torus} of $\mathcal{D}$ is defined to be the subgroup $H := \bigcap_{\alpha \in \Phi} N_G(U_{\alpha})$.

The isomorphism problem for RGD-systems is the question whether, given two RGD-systems $\mathcal{D}$ of type $(W, S)$ and $\mathcal{D}'$ of type $(W', S')$, a group isomorphism $\phi: G^{\mathcal{D}} \to G^{\mathcal{D}'}$ induces an isomorphism from $\mathcal{D}$ to $\mathcal{D}'$. Caprace and M\"uhlherr have shown in \cite[Theorem $2.2$]{CM05b} that this is the case as soon as there exists $x\in G^{\mathcal{D}'}$ such that
\[ \{ \phi( U_{\alpha} ) \mid \alpha \in \Phi(W, S) \} = \{ x U_{\alpha}' x^{-1} \mid \alpha \in \Phi(W', S') \}. \]
This reduction is also used in \cite{CM06} to solve the isomorphism problem for Kac-Moody groups over finite fields of cardinality at least $4$. The proof makes a crucial use of the action of the torus on the associated twin building, and more precisely of the fact that the torus fixes no chamber outside the fundamental twin apartment. If we consider an RGD-system over $\FF_2$ (i.e.\ any root group has cardinality $2$), then the torus is trivial and fixes the whole twin building. In particular, we cannot follow the strategy of \cite{CM06}. We note that without any further condition the isomorphism problem has a negative answer for example witnessed by the two exceptional isomorphisms $\mathrm{PSL}_2(7) \cong \mathrm{PSL}_3(2)$ and $^2A_3(2) \cong C_2(3)$. Caprace and M\"uhlherr excluded these cases by assuming that the root groups are large enough. We will exclude these isomorphisms by assuming that all root groups have cardinality $2$.

In \cite{BiDiss} we have constructed uncountably many new examples of RGD-systems over $\FF_2$ of type $(4, 4, 4)$ and it would be most desirable to know that all these groups are pairwise non-isomorphic. This was originally our main motivation to solve the isomorphism problem for such RGD-systems. It turned out that our arguments work not only for the type $(4, 4, 4)$, but for Coxeter system $(W, S)$ which are \emph{$2$-complete} (i.e.\ $2 < o(st) < \infty$ for all $s\neq t \in S$) and $\tilde{A}_2$-free (for any $J\subseteq S$ the Coxeter system $(\langle J \rangle, J)$ is not of type $\tilde{A}_2$). Following \cite{Ca06}, we call an RGD-system $\left(G, \left( U_{\alpha} \right)_{\alpha \in \Phi} \right)$ \emph{centered} if $G = \langle U_{\alpha} \mid \alpha \in \Phi \rangle$.

\begin{thmintro}\label{Main theorem: isomorphism theorem}
	Suppose that $(W, S)$ and $(W', S')$ are $2$-complete and $\tilde{A}_2$-free Coxeter systems of finite rank at least $3$ and let $\mathcal{D}$ and $\mathcal{D}'$ be two centered RGD-systems over $\FF_2$ of type $(W, S)$ and $(W', S')$. Then any isomorphism $G^{\mathcal{D}} \to G^{\mathcal{D}'}$ induces an isomorphism of $\mathcal{D}$ to $\mathcal{D}'$.
\end{thmintro}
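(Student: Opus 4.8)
\emph{Strategy.} The plan is to reduce to the criterion of Caprace and M\"uhlherr recalled above: by \cite[Theorem~2.2]{CM05b} it suffices to produce an element $x \in G^{\mathcal{D}'}$ with
$\{ \phi(U_{\alpha}) \mid \alpha \in \Phi(W,S) \} = \{ x U'_{\beta} x^{-1} \mid \beta \in \Phi(W',S') \}$,
and then their reduction finishes the argument. Thus the whole problem is to show that $\phi$ carries the set of root groups of $\mathcal{D}$ onto a single conjugate of the set of root groups of $\mathcal{D}'$. As the introduction stresses, the obstruction is that over $\FF_2$ the torus $H$ is trivial, so the torus-action arguments of \cite{CM06} are unavailable and the root groups must be pinned down by purely group-theoretic means.

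\emph{Step 1: an intrinsic characterization of root groups.} First I would seek an isomorphism-invariant property $P$ of a subgroup $V \le G^{\mathcal{D}}$ that holds if and only if $V$ is a root group. Over $\FF_2$ each $U_{\alpha} = \{1, u_{\alpha}\}$ is generated by an involution, $G$ is generated by these involutions, and---because $H = 1$---the stabilizer of a pair of opposite chambers of the associated twin building is $B_+ \cap B_- = H = 1$, so $G$ acts \emph{simply transitively} on such pairs. I would exploit this rigidity to phrase $P$ entirely in terms of $G$: the commutation pattern among the $u_{\alpha}$, the isomorphism type of the subgroups $\langle u_{\alpha}, g u_{\alpha} g^{-1} \rangle$ generated by a root involution together with its conjugates, and the nilpotency of products over prenilpotent pairs. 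The hypotheses enter decisively here. Being $2$-complete forces every rank-$2$ residue to be a finite generalized $m$-gon over $\FF_2$ with $3 \le m < \infty$, so the rank-$2$ subgroups generated by the root groups of a rank-$2$ subsystem have a controlled finite structure in which the root groups are recognizable; and $\tilde{A}_2$-freeness removes precisely the one rank-$3$ configuration whose local geometry fails to determine its root groups.

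\emph{Step 2: localize and globalize, then transport.} I would establish $P$ locally and then glue. In each rank-$2$ residue the root groups are recognized from the finite polygon; rank-$3$ residues, being $\tilde{A}_2$-free, are rigid enough to force the rank-$2$ recognitions to agree on shared panels, and the hypothesis that the rank is at least $3$ guarantees that these residues cover the building, yielding a consistent global notion. Since $P$ is invariant under any group isomorphism, $\phi$ then maps $\{U_{\alpha}\}$ bijectively onto $\{U'_{\beta}\}$, inducing a bijection $\Phi(W,S) \to \Phi(W',S')$.

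\emph{Step 3: upgrade to a conjugacy.} Finally I would verify that the induced map respects the combinatorial root-system data---opposition, prenilpotent pairs, and the commutator relations---so that it corresponds to an isometry of twin buildings. Fixing a pair of opposite chambers and invoking the simple transitivity of $G^{\mathcal{D}'}$ on such pairs then supplies the conjugator $x$ aligning $\{\phi(U_{\alpha})\}$ with $\{x U'_{\beta} x^{-1}\}$, after which \cite[Theorem~2.2]{CM05b} applies. The main obstacle is Step 1: deprived of the torus, the recognition of root groups must be extracted from the involution geometry alone, and it is exactly here that $2$-completeness, $\tilde{A}_2$-freeness and rank at least $3$ are indispensable---the same hypotheses that rule out the sporadic coincidences (such as $\mathrm{PSL}_2(7) \cong \mathrm{PSL}_3(2)$) which make the unrestricted isomorphism problem false.
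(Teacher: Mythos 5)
Your overall frame (reduce to \cite[Theorem~2.2]{CM05b} by showing $\phi$ carries the root groups onto a conjugate of the root-group system of $\mathcal{D}'$) is the same as the paper's, but your Step~1 is where the proof has to happen, and it is not carried out: you state that you ``would seek'' an isomorphism-invariant property $P$ characterizing root groups in terms of the involution geometry, but you never exhibit such a property, and the paper does not proceed this way either. What the paper actually uses as its isomorphism-invariant anchor is not the individual root groups but the \emph{maximal finite subgroups} $M_{st}=\langle U_{\pm\alpha_s},U_{\pm\alpha_t}\rangle$: maximal finiteness is automatically preserved by $\phi$, and by the Caprace--M\"uhlherr correspondence each such subgroup is the stabilizer of a unique pair of opposite rank-$2$ residues $R_{\pm}^{\{s,t\}}$ in $\Delta'$. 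The rank-$2$ pieces are then handled by Steinberg's theorem (the groups $A_2(2),B_2(2),G_2(2)$ are pairwise non-isomorphic and their automorphisms are known), which recovers the images $\phi(U_{\pm\alpha_s})$ as simple root groups of a twin apartment of each residue $R_{\pm}^{\{s,t\}}$ separately.

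The genuine gap is in your Step~2. You assert that the local recognitions can be made ``to agree on shared panels'' because rank-$3$ residues are rigid, but a priori the residues $R_{\epsilon}^{\{r,s\}},R_{\epsilon}^{\{r,t\}},R_{\epsilon}^{\{s,t\}}$ attached to three generators need not share any chamber at all --- there are no shared panels to agree on until one proves there are. Establishing that these three residues form a \emph{triangle} and that a triangle in a $2$-complete, $\tilde{A}_2$-free building intersects in exactly one chamber is Theorem~\ref{Theorem: 2-complete and A_2 tilde free triangle contains unique chamber} (Theorem~B), and it occupies Sections~\ref{Section: Coxeter buildings} and~\ref{Section: Triangles in buildings} of the paper; it is precisely here that $2$-completeness and $\tilde{A}_2$-freeness are consumed, via the classification of reflection triangles and a delicate analysis of compatible paths of panels. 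Only after this does one obtain the opposite chambers $d_+,d_-$, hence a single twin apartment $\Sigma(d_+,d_-)$ in which all the $\phi(U_{\pm\alpha_s})$ are simultaneously simple root groups, and the propagation to all roots via the elements $m(u)$ and strong transitivity then yields the conjugator $x$. Your sketch correctly identifies where the hypotheses must enter, but it replaces the paper's central theorem with an unproved rigidity claim, so as written it does not constitute a proof.
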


\begin{corintro}\label{Main Corollary: automorphism group}
	Suppose that $(W, S)$ is a $2$-complete and $\tilde{A}_2$-free Coxeter system of finite rank at least $3$ and let $\mathcal{D}$ be a centered RGD-system of type $(W, S)$ over $\FF_2$. Then any automorphism of $G$ is a product of an inner, a graph and a sign automorphism.
\end{corintro}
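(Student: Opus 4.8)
The plan is to deduce the statement from Theorem~\ref{Main theorem: isomorphism theorem} in the same way that the decomposition of automorphisms of a Kac--Moody group was deduced from the isomorphism problem in \cite{CM05b}; the feature special to $\FF_2$ is that over this field there are neither nontrivial field nor nontrivial diagonal automorphisms, which is exactly what removes those two factors. So let $\phi$ be an automorphism of $G = G^{\mathcal{D}}$. Applying Theorem~\ref{Main theorem: isomorphism theorem} with $\mathcal{D}' = \mathcal{D}$, the automorphism $\phi$ induces an automorphism of the RGD-system $\mathcal{D}$; by the reduction recalled in the introduction this means that there is some $x \in G$ with
\[ \{ \phi(U_\alpha) \mid \alpha \in \Phi \} = \{ x U_\beta x^{-1} \mid \beta \in \Phi \}. \]
Writing $c_x$ for conjugation by $x$ and replacing $\phi$ by $c_x^{-1} \circ \phi$, I may assume that $\phi$ itself stabilises the family $\{ U_\alpha \mid \alpha \in \Phi \}$ of root groups, so that there is a permutation $\pi$ of $\Phi$ with $\phi(U_\alpha) = U_{\pi(\alpha)}$ for every $\alpha$.

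Next I would analyse $\pi$ geometrically. Since $\phi$ permutes the root groups it permutes the parabolic subgroups generated by them, hence induces an automorphism of the twin building associated with $\mathcal{D}$. After composing $\phi$ with a further inner automorphism realising a suitable element of $W$ inside $G$ --- which keeps us inside $\mathrm{Inn}(G)$, this being a group --- I may assume that this automorphism stabilises the fundamental twin apartment together with the pair of fundamental chambers. Such an automorphism is then type-preserving up to a symmetry $\sigma$ of the Coxeter diagram, and it either preserves or interchanges the two halves $\Phi_+$ and $\Phi_-$; the diagram symmetry is realised by a graph automorphism $\gamma$, and a swap of the halves by the sign automorphism $\tau$ (taking $\tau = \id$ if no swap occurs). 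I expect this combinatorial classification to be the main obstacle: one must match the group-theoretic commutation relations among the $U_\alpha$ with the geometry of $\Phi$ in order to see that the only permutations $\pi$ arising this way are the graph-times-sign permutations, and in particular that $\sigma$ really respects the orders $o(st)$.

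Granting this, composing $\phi$ with the inverses of $\gamma$ and of $\tau$ yields an automorphism $\psi$ with $\psi(U_\alpha) = U_\alpha$ for every $\alpha \in \Phi$. Here the hypothesis $\FF_2$ enters decisively: each root group has order $2$, say $U_\alpha = \{ 1, u_\alpha \}$, so stabilising $U_\alpha$ setwise forces $\psi(u_\alpha) = u_\alpha$. As $\mathcal{D}$ is centered, $G = \langle U_\alpha \mid \alpha \in \Phi \rangle = \langle u_\alpha \mid \alpha \in \Phi \rangle$, so $\psi$ fixes a generating set of $G$ and is therefore the identity. Unwinding the substitutions --- and using that a product of inner automorphisms is again inner --- gives $\phi = c_g \circ \gamma \circ \tau$ for some $g \in G$, a product of an inner, a graph and a sign automorphism.

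The same order-$2$ argument explains the absence of the remaining factors: a field automorphism is forced to be trivial because $\FF_2$ has no nontrivial automorphism, and a diagonal automorphism fixes every $U_\alpha$ setwise because the torus $H = \bigcap_{\alpha \in \Phi} N_G(U_\alpha)$ is trivial over $\FF_2$; in both cases the order-$2$ computation collapses them into the identity factor $\psi$. Thus the only surviving types are the inner, graph and sign automorphisms appearing above, which is the assertion.
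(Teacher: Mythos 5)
The paper never writes out a proof of this corollary; it is left as the standard deduction from Theorem~\ref{Main theorem: isomorphism theorem}, and your overall strategy is exactly that deduction. The decisive step --- over $\FF_2$ every root group has order $2$, so an automorphism stabilising each $U_\alpha$ setwise fixes each generator $u_\alpha$ and hence, by centredness, fixes a generating set and is the identity --- is correct and is precisely where the hypothesis on the field enters. However, you have manufactured a gap that is not actually there. You weaken the conclusion of Theorem~\ref{Main theorem: isomorphism theorem} to the set-level statement $\{\phi(U_\alpha) \mid \alpha\in\Phi\} = \{xU_\beta x^{-1}\mid \beta\in\Phi\}$ and then declare that recovering the structure of the induced permutation of $\Phi$ (type-preservation up to a diagram symmetry, compatibility with the orders $o(st)$, the dichotomy between preserving and swapping $\Phi_+$ and $\Phi_-$) is ``the main obstacle.'' It is not: by the definition of \emph{induces an isomorphism} given in Section~\ref{Section: Isomorphisms of RGD-systems}, the conclusion of Theorem~\ref{Main theorem: isomorphism theorem} (applied with $\mathcal{D}'=\mathcal{D}$) already provides an isomorphism $\pi\colon W\to W$ with $\pi(S)=S$ --- hence a diagram automorphism, since $o(\pi(s)\pi(t))=o(st)$ automatically --- together with a sign $\epsilon$ and an element $x\in G$ such that $\phi(U_\alpha)=xU_{\epsilon\pi(\alpha)}x^{-1}$ for \emph{every} $\alpha$. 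So the entire geometric analysis of the twin building that you sketch (and leave unfinished) can be deleted: set $\psi:=c_x^{-1}\circ\phi$, observe $\psi(u_\alpha)=u_{\epsilon\pi(\alpha)}$ because the root groups have order $2$, and conclude by your centredness argument that $\psi$ is the graph--sign automorphism determined by $(\pi,\epsilon)$, whence $\phi$ is inner times graph times sign. With that correction your proof is complete and is the intended one.
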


The proof of Theorem \ref{Main theorem: isomorphism theorem} is based on the following fact: Any reflection triangle (see Section \ref{Section: Coxeter buildings} for the precise definition) of a $2$-complete and $\tilde{A}_2$-free Coxeter system of rank $3$ is a chamber (cf.\ classification in \cite[Fig.\ $8$ in $\S 5.1$]{Fe98}). We first show that an analogous result holds in higher rank (cf.\ Theorem \ref{Theorem: combinatorial triangle is chamber}). Then we introduce the notion of \emph{triangles} in general buildings which can be seen as generalization of reflection triangles from apartments to buildings and we obtained the following generalization of Theorem \ref{Theorem: combinatorial triangle is chamber} (cf.\ Theorem \ref{Theorem: 2-complete and A_2 tilde free triangle contains unique chamber}):

\begin{thmintro}\label{Main Theorem: triangle}
	Let $(W, S)$ be a $2$-complete and $\tilde{A}_2$-free Coxeter system of finite rank at least $3$, let $\Delta = (\C, \delta)$ be a building of type $(W, S)$ and let $\{ R_1, R_2, R_3 \}$ be a triangle. Then $R_1 \cap R_2 \cap R_3$ is non-empty and contains a unique chamber.
\end{thmintro}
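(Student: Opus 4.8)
The plan is to prove Theorem \ref{Main Theorem: triangle} by reducing the statement about triangles in a general building $\Delta$ to the corresponding statement about reflection triangles inside a single apartment, where the rank-3 classification result (Theorem \ref{Theorem: combinatorial triangle is chamber}, extended to higher rank) applies. The three objects $R_1, R_2, R_3$ in a triangle should be ``half-apartment-like'' sets (roots, or intersections of roots with $\C$), so the first step is to unwind the definition of triangle from Section \ref{Section: Coxeter buildings} and record that each $R_i$ is convex and that its boundary is a wall/residue of the expected kind. I would then argue that $R_1 \cap R_2 \cap R_3$ is nonempty: since each $R_i$ is convex and the building is a gated/convex geometry, one can use the Helly-type property for convex subsets of buildings (any pairwise-intersecting family of convex sets whose pairwise intersections are themselves suitably positioned has a common point), or alternatively produce a point directly by projecting chambers and using the gate property of residues.

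Once nonemptiness is established, the core of the argument is uniqueness together with the claim that the common intersection is a single chamber. The key reduction is to find an apartment $\Sigma$ containing a chamber of $R_1 \cap R_2 \cap R_3$ (or containing enough of the configuration) so that, inside $\Sigma$, the three sets $R_i \cap \Sigma$ become genuine roots bounding a reflection triangle in the Coxeter complex. First I would show that the intersection, being convex, is a residue or contains a chamber, and then choose an apartment $\Sigma$ meeting all three boundary walls appropriately. Having transported the configuration into $\Sigma$, I apply the higher-rank analogue of the rank-3 fact, namely Theorem \ref{Theorem: combinatorial triangle is chamber}, which asserts that a reflection triangle in a $2$-complete and $\tilde{A}_2$-free Coxeter system is a single chamber. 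This forces $R_1 \cap R_2 \cap R_3 \cap \Sigma$ to be exactly one chamber of $\Sigma$.

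The remaining step is to upgrade ``one chamber in one apartment'' to ``a unique chamber in the whole building.'' Here I would use that any two chambers in $R_1 \cap R_2 \cap R_3$ are contained in a common apartment (the standard building axiom), and that within that apartment the reflection-triangle result again yields a single chamber, so the two chambers must coincide. For nonemptiness of the chamber (as opposed to a lower-dimensional residue), I would rule out the possibility that the intersection is a proper residue by invoking the $2$-completeness hypothesis, which guarantees that all finite rank-2 parabolics are large enough that the three walls cannot share a panel or higher-dimensional residue; this is precisely where the hypotheses on $(W,S)$ are used.

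I expect the main obstacle to be the transfer step: ensuring that the three sets $R_i$ simultaneously restrict to honest roots in a single apartment $\Sigma$, since a priori the walls bounding the $R_i$ may only be defined combinatorially in the building and need not lie in one common apartment. Controlling this will require a careful choice of $\Sigma$ using the projections $\proj_{R_i}$ and the gate property, and showing that the three boundary walls, being pairwise non-parallel and in ``triangle position,'' can be realized simultaneously; the $\tilde{A}_2$-free hypothesis should be what prevents a degenerate configuration where the three walls bound an unbounded region rather than a triangle, and hence what ultimately guarantees a single chamber rather than an infinite intersection.
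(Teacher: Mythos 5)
Your plan starts from a misreading of the definition. In this paper a \emph{triangle} $\{R_1,R_2,R_3\}$ is a set of three \emph{spherical residues of rank $2$} satisfying the projection conditions (T1) (each $\proj_{R}Q$ is a panel) and (T2) (the two projection panels inside each $R$ are not parallel); the $R_i$ are not roots or half-apartment-like convex sets, and nothing in the definition guarantees that they pairwise intersect, let alone that they lie near a common wall. Because of this, your two main tools do not apply: a Helly-type argument for convex sets presupposes pairwise intersections that are not given, and in any case convex subsets of buildings do not satisfy a Helly property --- the paper never uses one, and the nonemptiness of $R_1\cap R_2\cap R_3$ is precisely the hard content of the theorem, not a soft consequence of convexity.

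The second, and decisive, gap is the transfer step you flag as ``the main obstacle'' but do not carry out. The paper splits into two cases according to whether some apartment $\Sigma$ meets all three residues (a ``$\Sigma$-triangle''). In that case the configuration restricts to a combinatorial triangle of roots in $\Sigma$ (Lemma \ref{Lemma: triangle yields reflection triangle}) and Theorem \ref{Theorem: combinatorial triangle is chamber} applies --- this is the part your sketch captures. The other case, where no apartment meets all three residues, occupies Proposition \ref{Proposition: auxiliary result}, Theorem \ref{Theorem: intersection of two residues is a chamber} and most of the proof of Theorem \ref{Theorem: 2-complete and A_2 tilde free triangle contains unique chamber}: one builds compatible paths of panels between the projection panels, takes a minimal gallery that leaves every apartment containing the starting configuration, extracts from the failure point an auxiliary $\Sigma$-triangle $\{R_P,R_Q,R_2\}$, and then derives a contradiction via the length estimates of Lemmas \ref{Lemma: no combinatorial triangle} and \ref{Lemma: parallel panels - word become longer}. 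None of this is replaced by anything in your proposal; ``a careful choice of $\Sigma$ using the gate property'' is not an argument, and there is no reason a priori that the three residues of a triangle can be simultaneously realized in one apartment --- showing that they effectively can (or that the alternative is contradictory) is the theorem. Your uniqueness step also does not close: two chambers of $R_1\cap R_2\cap R_3$ lie in a common apartment, but that apartment need not meet the residues in a way that reproduces a reflection triangle; the paper instead notes that the intersection, once nonempty, is a residue contained in the panel $\proj_{R_1}R_2$, and rules out its being a whole panel using (T2).
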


\subsection*{Overview}

Section \ref{Section: Premilinaries} is devoted to fixing notation, to recalling some known facts and to proving some useful and elementary results about buildings. In Section \ref{Section: Coxeter buildings} we study reflection and combinatorial triangles in Coxeter buildings. The goal of this section is to prove Theorem \ref{Theorem: combinatorial triangle is chamber}. In Section \ref{Section: Triangles in buildings} we introduce the notion of \emph{triangles} and prove some result about them. We end the section by proving Theorem \ref{Main Theorem: triangle}. In Section \ref{Section: Isomorphisms of RGD-systems} we recall the definitions of twin buildings and RGD-systems and we recall or prove some results about them. In Section \ref{Section: Main result} we put everything together and solve the isomorphism problem for centered RGD-systems over $\FF_2$ of $2$-complete and $\tilde{A}_2$-free type.

\subsection*{Acknowledgement}

I am grateful to Bernhard M\"uhlherr for drawing my attention to this problem. I thank him and Fran\c{c}ois Thilmany for many helpful discussions on the topic. I also thank Timothée Marquis for valuable remarks on an earlier draft. This work was supported by a fellowship of the German Academic Exchange Service (DAAD) via the grant 57664192.

\section{Preliminaries}\label{Section: Premilinaries}

\subsection*{Coxeter systems}

Let $(W, S)$ be a Coxeter system and let $\ell$ denote the corresponding length function. For $s, t \in S$ we denote the order of $st$ in $W$ by $m_{st}$. The \emph{Coxeter diagram} corresponding to $(W, S)$ is the labeled graph $(S, E(S))$, where $E(S) = \{ \{s, t \} \mid m_{st}>2 \}$ and where each edge $\{s,t\}$ is labeled by $m_{st}$ for all $s, t \in S$. The \emph{rank} of the Coxeter system is the cardinality of the set $S$.

It is well-known that for each $J \subseteq S$ the pair $(\langle J \rangle, J)$ is a Coxeter system (cf.\ \cite[Ch.\ IV, §$1$ Theorem $2$]{Bo68}). A subset $J \subseteq S$ is called \emph{spherical} if $\langle J \rangle$ is finite. Given a spherical subset $J$ of $S$, there exists a unique element of maximal length in $\langle J \rangle$, which we denote by $r_J$ (cf.\ \cite[Corollary $2.19$]{AB08}). The Coxeter system $(W, S)$ is called \emph{spherical} if $S$ is spherical; it is called \emph{$2$-spherical} if $\langle J \rangle$ is finite for each $J \subseteq S$ with $\vert J \vert \leq 2$ (i.e.\ $m_{st} < \infty$ for all $s, t \in S$).

\begin{definition}
	Let $(W, S)$ be a Coxeter system.
	\begin{enumerate}[label=(\alph*)]
		\item $(W, S)$ is called \emph{$2$-complete}, if it is $2$-spherical and if the underlying Coxeter diagram is the complete graph.
		
		\item $(W, S)$ is called \emph{$\tilde{A}_2$-free}, if for each $J \subseteq S$ the Coxeter system $(\langle J \rangle, J)$ is not of type $\tilde{A}_2$.
	\end{enumerate}
\end{definition}

\begin{lemma}\label{Lemma: BiCoxGrowth2.8}
	Let $(W, S)$ be a $2$-complete Coxeter system of finite rank. Suppose $w\in W$ and $s\neq t \in S$ with $\ell(ws) = \ell(w) +1 = \ell(wt)$ and suppose $w' \in \langle s, t \rangle$ with $\ell(w') \geq 2$. Then we have $\ell(ww'r) = \ell(w) + \ell(w') +1$ for each $r\in S \backslash \{s, t\}$.
\end{lemma}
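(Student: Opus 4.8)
The plan is to reduce the statement to a positivity assertion about roots and then isolate the single hard case. Since $\ell(ws)=\ell(wt)=\ell(w)+1$, the element $w$ is the minimal-length representative of the coset $w\langle s,t\rangle$, so $\ell(ww'')=\ell(w)+\ell(w'')$ for every $w''\in\langle s,t\rangle$; in particular $\ell(ww')=\ell(w)+\ell(w')$. As $\ell(ww'r)\le\ell(ww')+1$ always, the claim is equivalent to saying that $r$ is a right ascent of $ww'$, and by the usual root criterion this means $ww'(\alpha_r)\in\Phi^+$, where $\alpha_r$ denotes the simple root of $r$ in the geometric representation and $B$ the associated bilinear form (with $B(\alpha_x,\alpha_y)=-\cos(\pi/m_{xy})$). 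Writing $\gamma:=w'(\alpha_r)$ and noting that the reflections $s,t$ only alter the $\alpha_s$- and $\alpha_t$-coordinates, I get $\gamma=\alpha_r+c_s\alpha_s+c_t\alpha_t$ with $c_s,c_t\ge 0$; since $\ell(w')\ge 2$ both letters $s,t$ occur in $w'$ and one checks $c_s,c_t>0$. Thus the goal becomes $\gamma\notin N(w)$, where $N(w):=\{\beta\in\Phi^+\mid w\beta\in\Phi^-\}$; by hypothesis $\alpha_s,\alpha_t\notin N(w)$, and in fact $N(w)\cap\Phi_{\{s,t\}}=\varnothing$.

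Next I would run an induction on $\ell(w')$ that reduces everything to the case $\ell(w')=2$. Here the key elementary fact is that $N(w)$ is biclosed: if $\beta,\beta'\in\Phi^+$ both lie in $N(w)$ (resp. both in $\Phi^+\setminus N(w)$) and $\lambda\beta+\mu\beta'\in\Phi$ for some $\lambda,\mu>0$, then $\lambda\beta+\mu\beta'\in N(w)$ (resp. in $\Phi^+\setminus N(w)$); this is immediate from the linearity of $w$. For the inductive step write $w'=b\,w''$ with $b\in\{s,t\}$ its first letter and $\ell(w'')=\ell(w')-1\ge 2$, so by induction $\gamma'':=w''(\alpha_r)\in\Phi^+\setminus N(w)$. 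Then $\gamma=b(\gamma'')=\gamma''-2B(\gamma'',\alpha_b)\,\alpha_b$, and I claim the coefficient $-2B(\gamma'',\alpha_b)$ is strictly positive: indeed $B(\gamma'',\alpha_b)=B(\alpha_r,(w'')^{-1}\alpha_b)$, and since $b$ is a right ascent of $(w'')^{-1}$ (because $\ell((w'')^{-1}b)=\ell(w')=\ell(w'')+1$) the root $(w'')^{-1}\alpha_b$ is a nonzero nonnegative combination of $\alpha_s,\alpha_t$, whence $B(\alpha_r,(w'')^{-1}\alpha_b)<0$ thanks to $m_{sr},m_{rt}\ge 3$. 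Therefore $\gamma$ is a positive combination of the two roots $\gamma'',\alpha_b\in\Phi^+\setminus N(w)$, and biclosedness gives $\gamma\in\Phi^+\setminus N(w)$, as wanted.

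This leaves the base case $\ell(w')=2$, i.e. $w'\in\{st,ts\}$, which I expect to be the main obstacle: the biclosedness argument now has no root known a priori to avoid $N(w)$ to start from — the length-one root $s(\alpha_r)$ or $t(\alpha_r)$ genuinely can meet $N(w)$, which is precisely why the hypothesis $\ell(w')\ge 2$ is needed — so a quantitative input is unavoidable. I would first reduce to rank $3$: projecting the chamber $w^{-1}C_0$ onto the residue of type $J=\{s,t,r\}$ through the fundamental chamber, the gate $g\in\langle s,t,r\rangle$ still has $s,t$ as right ascents, and since $\gamma\in\Phi_J$ one has $\gamma\in N(w)\iff\gamma\in N(g)$; so it suffices to argue inside $\langle s,t,r\rangle$. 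There $\alpha_s,\alpha_t\notin N(g)$ says exactly that $g^{-1}C_0$ lies in the fundamental sector $\Sigma$ of the dihedral group $\langle s,t\rangle$ (the intersection of the $C_0$-sides of the walls of $s$ and $t$), and the assertion becomes that the wall $w'H_r$ does not separate $C_0$ from $g^{-1}C_0$; for this it is enough to show that $w'H_r$ avoids the interior of $\Sigma$. I would establish this last planar statement by a direct computation in the two-dimensional geometric realization of $\langle s,t,r\rangle$ — Euclidean when this group is of type $\tilde A_2$ and hyperbolic otherwise — using that all three labels $m_{st},m_{sr},m_{rt}$ are at least $3$. I expect the Euclidean ($\tilde A_2$) case to be extremal, with $w'H_r$ tangent to $\Sigma$ at the vertex fixed by $\langle s,t\rangle$, the hyperbolic cases following a fortiori.
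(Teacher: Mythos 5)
The paper does not actually prove this lemma internally --- it is quoted verbatim from \cite[Corollary~$2.8$]{BiCoxGrowth} --- so there is no in-paper argument to compare against; what matters is whether your proposal stands on its own. Its skeleton is sound: the reduction to showing $ww'(\alpha_r)\in\Phi^+$ is correct (since $w$ is the minimal coset representative of $w\langle s,t\rangle$), the biclosedness of $\Phi^+\setminus N(w)$ is correct, and the inductive step is complete and genuinely uses $2$-completeness in the right place (namely $B(\alpha_r,\alpha_s),B(\alpha_r,\alpha_t)<0$ because $m_{rs},m_{rt}>2$, which forces the coefficient $-2B(w''\alpha_r,\alpha_b)$ to be strictly positive). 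The reduction of the base case to the rank-$3$ parabolic $\langle r,s,t\rangle$ via the gate/projection decomposition $N(w)\cap\Phi_J=N(g)$ is also fine.

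The genuine gap is the base case $\ell(w')=2$, which you yourself identify as carrying all the content of the lemma, and which you do not prove: you reduce it to the planar assertion that the geodesic $w'H_r$ avoids the open sector $\Sigma$ bounded by $H_s$ and $H_t$, and then write ``I would establish this by a direct computation'' and ``the hyperbolic cases following a fortiori.'' That ``a fortiori'' is not automatic. A comparison of angles alone settles the matter only when the relevant label is at least $m_{st}$: the continuation of $H_r$ beyond $P_{rs}$ (say) leaves $\Sigma$ at exterior angle $\pi-\pi/m_{rs}$, and the nonexistence of a hyperbolic triangle with angle sum $\geq\pi$ rules out a crossing of the rotated sector only when $\pi/m_{st}+(\pi-\pi/m_{rs})\geq\pi$, i.e.\ $m_{rs}\geq m_{st}$. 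In a configuration such as $m_{st}=7$, $m_{rs}=m_{rt}=3$ the angle count permits the forbidden triangle, so one must invoke the actual side lengths (e.g.\ the hyperbolic dual law of cosines for the triangle with angles $\pi/m_{st},\pi/m_{rs},\pi/m_{rt}$) to rule it out. Equivalently, on the algebraic side: for $\ell(w')=2$ you have no second root known to lie in $\Phi^+\setminus N(w)$ from which to bootstrap via biclosedness (as you note, $s(\alpha_r)$ may lie in $N(w)$), so a quantitative input such as the estimate $B\bigl(ts(\alpha_r),\alpha_t\bigr)=\cos(\pi/m_{rt})+2\cos(\pi/m_{rs})\cos(\pi/m_{st})\geq 1$ together with a dominance argument in the sense of Brink--Howlett is needed. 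Until the $\ell(w')=2$ case is carried out in one of these ways, the proof is incomplete precisely where the hypothesis $\ell(w')\geq 2$ does its work.
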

\begin{proof}
	This is \cite[Corollary $2.8$]{BiCoxGrowth}.
\end{proof}

\subsection*{Buildings}

Let $(W, S)$ be a Coxeter system. A \emph{building of type $(W, S)$} is a pair $\Delta = (\C, \delta)$ where $\C$ is a non-empty set and where $\delta: \C \times \C \to W$ is a \emph{distance function} satisfying the following axioms, where $x, y\in \C$ and $w = \delta(x, y)$:
\begin{enumerate}[label=(Bu\arabic*)]
	\item $w = 1_W$ if and only if $x=y$;
	
	\item if $z\in \C$ satisfies $s := \delta(y, z) \in S$, then $\delta(x, z) \in \{w, ws\}$, and if, furthermore, $\ell(ws) = \ell(w) +1$, then $\delta(x, z) = ws$;
	
	\item if $s\in S$, there exists $z\in \C$ such that $\delta(y, z) = s$ and $\delta(x, z) = ws$.
\end{enumerate}
The \emph{rank} of $\Delta$ is the rank of the underlying Coxeter system. The elements of $\C$ are called \emph{chambers}. Given $s\in S$ and $x, y \in \C$, then $x$ is called \emph{$s$-adjacent} to $y$, if $\delta(x, y) = s$. The chambers $x, y$ are called \emph{adjacent}, if they are $s$-adjacent for some $s\in S$. A \emph{gallery} from $x$ to $y$ is a sequence $(x = x_0, \ldots, x_k = y)$ such that $x_{l-1}$ and $x_l$ are adjacent for all $1 \leq l \leq k$; the number $k$ is called the \emph{length} of the gallery. A gallery from $x$ to $y$ of length $k$ is called \emph{minimal} if there is no gallery from $x$ to $y$ of length $<k$. For two chambers $x$ and $y$ we define $\ell(x, y) := \ell(\delta(x, y))$.

Given a subset $J \subseteq S$ and $x\in \C$, the \emph{$J$-residue} of $x$ is the set $R_J(x) := \{y \in \C \mid \delta(x, y) \in \langle J \rangle \}$. Each $J$-residue is a building of type $(\langle J \rangle, J)$ with the distance function induced by $\delta$ (cf.\ \cite[Corollary $5.30$]{AB08}). A \emph{residue} is a subset $R$ of $\C$ such that there exist $J \subseteq S$ and $x\in \C$ with $R = R_J(x)$. Since the subset $J$ is uniquely determined by $R$, the set $J$ is called the \emph{type} of $R$ and the \emph{rank} of $R$ is defined to be the cardinality of $J$. Given $x\in \C$ and a $J$-residue $R$, then there exists a unique chamber $z\in R$ such that $\ell(x, y) = \ell(x, z) + \ell(z, y)$ holds for every $y\in R$ (cf.\ \cite[Proposition $5.34$]{AB08}). The chamber $z$ is called the \textit{projection of $x$ onto $R$} and is denoted by $\proj_R x$. Moreover, if $z = \proj_R x$ we have $\delta(x, y) = \delta(x, z) \delta(z, y)$ for each $y\in R$. A residue is called \emph{spherical} if its type is a spherical subset of $S$. Let $R$ be a spherical $J$-residue. Then $x, y \in R$ are called \emph{opposite in $R$} if $\delta(x, y) = r_J$. Two residues $P, Q \subseteq R$ are called \emph{opposite in $R$} if for each $p\in P$ there exists $q\in Q$ such that $p, q$ are opposite in $R$. A \emph{panel} is a residue of rank $1$ and we define $\P_s(c) := R_{\{s\}}(c)$ for all $(c, s) \in \C \times S$. The building $\Delta$ is called \emph{thick}, if each panel of $\Delta$ contains at least three chambers; it is called \emph{spherical} if its type is spherical.

\begin{lemma}\label{Lemma: Equal projection chamber on residues}
	Let $\Delta = (\C, \delta)$ be a building of type $(W, S)$. Let $R$ and $Q$ be two residues of $\Delta$ with $Q \subseteq R$, and let $c\in \C$ with $\proj_R c \in Q$. Then $\proj_R c = \proj_Q c$.
\end{lemma}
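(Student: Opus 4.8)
The plan is to reduce everything to the gate (minimality) property that defines a projection, together with its uniqueness. Recall from the text that for a residue $R$ and a chamber $c$, the projection $\proj_R c$ is the unique chamber $z \in R$ satisfying $\ell(c, y) = \ell(c, z) + \ell(z, y)$ for every $y \in R$. Note also that the projection onto $Q$ exists and is a well-defined chamber, since $Q$ is itself a residue and projections are defined for every chamber onto every residue.

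The key observation is that this defining property is inherited by passing to a smaller residue. Concretely, I would set $z := \proj_R c$ and record that $z \in Q$ by hypothesis. Since $z$ is the projection of $c$ onto $R$, the identity $\ell(c, y) = \ell(c, z) + \ell(z, y)$ holds for every $y \in R$; restricting to the subset $Q \subseteq R$, it holds in particular for every $y \in Q$.

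Now $z$ lies in $Q$ and satisfies $\ell(c, y) = \ell(c, z) + \ell(z, y)$ for all $y \in Q$, which is exactly the characterizing property of $\proj_Q c$. By the uniqueness of the projection chamber onto $Q$, I conclude $z = \proj_Q c$, as claimed. I expect essentially no obstacle here: the only point worth stating explicitly is that the gate property carries over from $R$ to the sub-residue $Q$ and that uniqueness then forces the two projections to agree. If one prefers, the same argument can be run with the multiplicativity $\delta(c, y) = \delta(c, z)\delta(z, y)$ in place of the length additivity, since both encode the gate property.
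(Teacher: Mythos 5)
Your proof is correct and rests on the same ingredient as the paper's, namely the gate property $\ell(c,y)=\ell(c,\proj_R c)+\ell(\proj_R c,y)$; you invoke the uniqueness of the gate of $Q$ directly, whereas the paper applies the gate property of both $R$ and $Q$ to obtain $\ell(c,\proj_Q c)=\ell(c,\proj_Q c)+2\,\ell(\proj_Q c,\proj_R c)$ and concludes that the two projections coincide. This is only a cosmetic difference; both arguments are complete.
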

\begin{proof}
	This follows from the following easy computation (the first equation follows from the fact $Q \subseteq R$; the second equation follows from the fact $\proj_R c \in Q$):
	\allowdisplaybreaks
	\begin{align*}
		\ell(c, \proj_Q c) &= \ell(c, \proj_R c) + \ell(\proj_R c, \proj_Q c) \\
		&= \ell(c, \proj_Q c) + \ell(\proj_Q c, \proj_R c) + \ell(\proj_R c, \proj_Q c) \\
		&= \ell(c, \proj_Q c) + 2 \cdot \ell(\proj_Q c, \proj_R c) \qedhere
	\end{align*}
\end{proof}

Let $\Delta = (\C, \delta)$ and $\Delta' = (\C', \delta')$ be two buildings of type $(W, S)$. An \emph{isometry} between $\mathcal{X} \subseteq \C$ and $\mathcal{X}' \subseteq \C'$ is a bijection $\phi: \mathcal{X} \to \mathcal{X}'$ such that $\delta'(\phi(x), \phi(y)) = \delta(x, y)$ holds for all $x, y \in \mathcal{X}$. In this case $\mathcal{X}$ and $\mathcal{X}'$ are called \emph{isometric}. An \emph{(type-preserving) automorphism} of a building $\Delta = (\C, \delta)$ is an isometry $\phi:\C \to \C$. We remark that some authors distinguish between automorphisms and type-preserving automorphisms. An automorphism in our sense is type-preserving. We denote the set of all automorphisms of the building $\Delta$ by $\Aut(\Delta)$.

\begin{example}
	We define $\delta: W \times W \to W, (x, y) \mapsto x^{-1}y$. Then $\Sigma(W, S) := (W, \delta)$ is a building of type $(W, S)$. The group $W$ acts faithfully on $\Sigma(W, S)$ by multiplication from the left, i.e.\ $W \leq \Aut(\Sigma(W, S))$.
\end{example}

\subsection*{Apartments}

Let $(W, S)$ be a Coxeter system and let $\Delta = (\C, \delta)$ be a building of type $(W, S)$. A subset $\Sigma \subseteq \C$ is called \emph{convex} if $\proj_P c \in \Sigma$ for all $c\in \Sigma$ and each panel $P \subseteq \C$ which meets $\Sigma$. We note that if $\Sigma \subseteq \C$ is a convex subset and if $R$ is a residue which meets $\Sigma$, then $\proj_R c \in \Sigma$ for all $c\in \Sigma$ (cf.\ \cite[Lemma $5.45$]{AB08}). A subset $\Sigma \subseteq \C$ is called \emph{thin} if $P \cap \Sigma$ contains exactly two chambers for each panel $P \subseteq \C$ which meets $\Sigma$. An \emph{apartment} is a non-empty subset $\Sigma \subseteq \C$, which is convex and thin. Moreover, a subset $\Sigma \subseteq \C$ is an apartment if and only if it is isometric to $\Sigma(W, S)$ (cf.\ \cite[Corollary $5.67$]{AB08}). Furthermore, any subset of $\C$ that is isometric to a subset of $W$ is contained in an apartment (cf.\ \cite[Theorem $5.73$]{AB08}). As a consequence we note that for any two chambers there exists an apartment containing both (cf.\ \cite[Corollary $5.74$]{AB08}).

\subsection*{Roots}

Let $(W, S)$ be a Coxeter system. A \emph{reflection} is an element of $W$ that is conjugate to an element of $S$. For $s\in S$ we let $\alpha_s := \{ w\in W \mid \ell(sw) > \ell(w) \}$ be the \emph{simple root} corresponding to $s$. A \emph{root} is a subset $\alpha \subseteq W$ such that $\alpha = v\alpha_s$ for some $v\in W$ and $s\in S$. We denote the set of all roots by $\Phi(W, S)$. The set $\Phi(W, S)_+ := \{ \alpha \in \Phi(W, S) \mid 1_W \in \alpha \}$ is the set of all \emph{positive roots} and $\Phi(W, S)_- := \{ \alpha \in \Phi(W, S) \mid 1_W \notin \alpha \}$ is the set of all \emph{negative roots}. For each root $\alpha \in \Phi(W, S)$, the complement $-\alpha := W \backslash \alpha$ is again a root; it is called the root \emph{opposite} to $\alpha$. We denote the unique reflection which interchanges these two roots by $r_{\alpha} \in W \leq \Aut(\Sigma(W, S))$. We note that roots are convex (cf.\ \cite[Lemma $3.44$]{AB08}). A pair $\{ \alpha, \beta \}$ of roots is called \emph{prenilpotent} if both $\alpha \cap \beta$ and $(-\alpha) \cap (-\beta)$ are non-empty sets. For such a pair we will write $\left[ \alpha, \beta \right] := \{ \gamma \in \Phi(W, S) \mid \alpha \cap \beta \subseteq \gamma \text{ and } (-\alpha) \cap (-\beta) \subseteq -\gamma \}$ and $(\alpha, \beta) := \left[ \alpha, \beta \right] \backslash \{ \alpha, \beta \}$.

Let $\Delta = (\C, \delta)$ be a building of type $(W, S)$. A subset $\alpha \subseteq \C$ is called \emph{root}, if it is isometric to $\alpha_s$ for some $s\in S$. We denote the set of roots contained in some apartment $\Sigma$ by $\Phi^{\Sigma}$. We note that we can identify $\Phi^{\Sigma}$ with $\Phi(W, S)$.

\begin{convention}
	For the rest of this paper we let $(W, S)$ be a Coxeter system of finite rank and we define $\Phi := \Phi(W, S)$ (resp.\ $\Phi_+, \Phi_-$). 
\end{convention}

\begin{lemma}\label{Lemma: no apartment contains chambers}
	Let $\Delta = (\C, \delta)$ be a building of type $(W, S)$. Let $(d_0, \ldots, d_k)$ be a minimal gallery and let $c\in \C$. Let $\Sigma$ be an apartment containing $c, d_0, \ldots, d_{k-1}$ and suppose that no apartment contains $c, d_0, \ldots, d_k$. Suppose $e\in \Sigma$ with $\delta(d_{k-1}, e) = \delta(d_{k-1}, d_k)$ and let $\alpha \in \Phi^{\Sigma}$ be the unique root containing $d_{k-1}$ but not $e$. Then $c\notin \alpha$.
\end{lemma}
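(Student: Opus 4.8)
The plan is to argue by contradiction: assume $c \in \alpha$ and then build an apartment containing all of $c, d_0, \ldots, d_k$, contradicting the hypothesis. First I would record the setup. Since $\Sigma$ already contains $c, d_0, \ldots, d_{k-1}$, the assumption that no apartment contains $c, d_0, \ldots, d_k$ forces $d_k \notin \Sigma$. Writing $s := \delta(d_{k-1}, d_k) \in S$ and $P := \P_s(d_{k-1})$, thinness of $\Sigma$ gives $\Sigma \cap P = \{d_{k-1}, e\}$ with $e \neq d_k$, and $\alpha$ is by definition the root of $\Sigma$ with $d_{k-1} \in \alpha$ and $e \in -\alpha$.

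The first substantial step is to show that the entire initial segment $d_0, \ldots, d_{k-1}$ lies in $\alpha$. Since $(d_0, \ldots, d_k)$ is minimal we have $\ell(d_0, d_k) = \ell(d_0, d_{k-1}) + 1$; comparing lengths over the panel $P$ forces $\proj_P d_0 = d_{k-1}$ (any other choice would make $\ell(d_0, d_{k-1})$ equal to or larger than $\ell(d_0, d_k)$). Hence $\ell(d_0, e) = \ell(d_0, d_{k-1}) + 1 = k$, so $(d_0, \ldots, d_{k-1}, e)$ is a minimal gallery inside $\Sigma$. A minimal gallery in a Coxeter complex crosses the wall $\partial\alpha$ at most once (cf.\ \cite{AB08}); it does cross it at the final step, as $d_{k-1} \in \alpha$ and $e \in -\alpha$, so it crosses nowhere else and $d_0, \ldots, d_{k-1}$ all lie on the $d_{k-1}$-side, i.e.\ in $\alpha$. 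Together with the assumption $c \in \alpha$ this gives $\{c, d_0, \ldots, d_{k-1}\} \subseteq \alpha$.

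The second step is to show that $\alpha \cup \{d_k\}$ is isometric to a subset of $W$. Define $\phi : \alpha \cup \{d_k\} \to \Sigma$ by $\phi|_{\alpha} = \id$ and $\phi(d_k) = e$; since $d_k \notin \Sigma$ while $\alpha \cup \{e\} \subseteq \Sigma$, this is a bijection onto $\alpha \cup \{e\}$. The only pairs to check for the isometry property are $(y, d_k)$ with $y \in \alpha$. Here convexity of $\Sigma$ places $\proj_P y$ in $\Sigma \cap P$, and since $y$ lies in the convex root $\alpha$ the projection is the same-side chamber $\proj_P y = d_{k-1}$; the projection identity then yields $\delta(y, d_k) = \delta(y, d_{k-1})\, s = \delta(y, e) = \delta(\phi(y), \phi(d_k))$. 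Thus $\phi$ is an isometry onto the subset $\alpha \cup \{e\}$ of $\Sigma \cong \Sigma(W, S)$, so $\alpha \cup \{d_k\}$ is isometric to a subset of $W$.

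Finally, since any subset of $\C$ isometric to a subset of $W$ is contained in an apartment (Theorem $5.73$ of \cite{AB08}), there is an apartment $\Sigma'$ with $\alpha \cup \{d_k\} \subseteq \Sigma'$. Then $c, d_0, \ldots, d_{k-1} \in \alpha \subseteq \Sigma'$ and $d_k \in \Sigma'$, so $\Sigma'$ contains $c, d_0, \ldots, d_k$, contradicting the hypothesis; hence $c \notin \alpha$. I expect the main obstacle to be the bookkeeping in the first step — deducing $\proj_P d_0 = d_{k-1}$ from minimality and concluding that the minimal gallery $(d_0, \ldots, d_{k-1}, e)$ stays on one side of $\partial\alpha$ — whereas the isometry computation and the final appeal to \cite{AB08} are routine. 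One should also take care to use convexity of the root $\alpha$ itself, not merely of $\Sigma$, when locating $\proj_P y$.
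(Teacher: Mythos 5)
Your proof is correct and follows essentially the same strategy as the paper: assume $c\in\alpha$, use convexity of the root $\alpha$ to force $\proj_P c = d_{k-1}$, deduce an isometric embedding into $W$, and invoke \cite[Theorem $5.73$]{AB08} to produce a forbidden apartment. The only difference is that you embed the larger set $\alpha\cup\{d_k\}$ (after first checking $d_0,\dots,d_{k-1}\in\alpha$ via the wall-crossing argument), whereas the paper embeds only $\{c,d_0,d_k\}$ and implicitly recovers $d_1,\dots,d_{k-1}$ from convexity of the resulting apartment; your variant is marginally more self-contained but not a different method.
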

\begin{proof}
	By contrary we assume $c\in \alpha$. Define $s := \delta(d_{k-1}, d_k)$ and $P := \P_s(d_k)$. As $(d_0, \ldots, d_k)$ is a minimal gallery, we have $\proj_P d_0 = d_{k-1}$. Note that $P \cap \alpha = \{d_{k-1}\}$. As $c, d_{k-1} \in \alpha$ and roots are convex, we deduce $\proj_P c \in \alpha \cap P = \{ d_{k-1} \}$. As apartments are isometric to $\Sigma(W, S)$ we have the following:
	\[ \delta(c, d_k) = \delta(c, d_{k-1}) s = \delta(c, d_0) \delta(d_0, d_{k-1})s = \delta(c, d_0) \delta(d_0, d_k). \]
	But this implies that $\{ c, d_0, d_k \} \subseteq \C$ is isometric to a subset of $W$. We conclude that there exists an apartment containing $c, d_0, d_k$ (cf.\ also \cite[Exercise $5.77(a)$]{AB08}). This is a contradiction and we infer $c \notin \alpha$.
\end{proof}

\subsection*{Parallel residues in buildings}

In this subsection we let $\Delta = (\C, \delta)$ be a building of type $(W, S)$. For two residues $R, T$ we define the mapping $\proj^R_T: R \to T, x \mapsto \proj_T x$ and we define $\proj_T R := \{ \proj_T r \mid r\in R \}$. We note that $\proj_T R$ is a residue contained in $T$ (cf.\ \cite[Lemma $5.36(2)$]{AB08}). The residues $R, T$ are called \textit{parallel} if $\proj_R T = R$ and $\proj_T R = T$. We note that for parallel residues $R$ and $T$ the element $\delta(c, \proj_T c)$ is independent of the choice of $c\in R$ (cf.\ \cite[Proposition $21.10$]{MPW15}).

\begin{lemma}\label{Lemma: results about parallel residues}
	Let $R$ and $T$ be two residues. Then the following hold:
	\begin{enumerate}[label=(\alph*)]
		\item The residues $\proj_R T$ and $\proj_T R$ are parallel.
		
		\item $R$ and $T$ are parallel if and only if $\proj_T^R$ and $\proj_R^T$ are bijections inverse to each other.
		
		\item Let $\Sigma$ be an apartment containing chambers of $R$ and $T$. Then $R$ and $T$ are parallel if and only if $R\cap \Sigma$ and $T\cap \Sigma$ are parallel residues of $\Sigma$.
	\end{enumerate}
\end{lemma}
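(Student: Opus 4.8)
The plan is to prove the three parts of Lemma~\ref{Lemma: results about parallel residues} in the order (a), (b), (c), using the projection-composition machinery that is standard for buildings and whose basic instances are already quoted in the excerpt (in particular \cite[Lemma $5.36(2)$]{AB08}, which tells us that $\proj_T R$ is a residue contained in $T$, and Lemma~\ref{Lemma: Equal projection chamber on residues}). Throughout I would write $R' := \proj_R T$ and $T' := \proj_T R$, so that $R' \subseteq R$ and $T' \subseteq T$ are residues, and the task in (a) is to show $\proj_{R'} T' = R'$ and $\proj_{T'} R' = T'$.

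For part (a), the key identity is that projecting twice does not lose anything: I would first argue $\proj_{T'} R' = T'$ by showing both inclusions. The inclusion $\proj_{T'} R' \subseteq T'$ is immediate since projections into $T'$ land in $T'$. For the reverse, take $t' \in T'$; by definition $t' = \proj_T r$ for some $r \in R$, and I want to realize $t'$ as $\proj_{T'} r'$ for some $r' \in R'$. The natural candidate is $r' := \proj_R t'$, which lies in $R' = \proj_R T$ because $t' \in T$. Then I would use the transitivity/compatibility of projections onto a residue and a subresidue (Lemma~\ref{Lemma: Equal projection chamber on residues}, applied to the pair $T' \subseteq T$, together with the gate property) to check that $\proj_{T'} r' = t'$. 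The same argument with the roles of $R,T$ interchanged gives $\proj_{R'} T' = R'$, establishing that $R'$ and $T'$ are parallel.

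For part (b), the forward direction is the substantive one: assuming $R$ and $T$ parallel, i.e.\ $\proj_R T = R$ and $\proj_T R = T$, I would show $\proj_T^R$ and $\proj_R^T$ are mutually inverse bijections. Since $\proj_R T = R$, the map $\proj_R^T$ is surjective onto $R$, and symmetrically $\proj_T^R$ is surjective onto $T$; the core point is that for $x \in R$ one has $\proj_R(\proj_T x) = x$. This I would extract from the gate property: the composite $\proj_R \circ \proj_T$ restricted to $R$ fixes $R$ pointwise because it cannot decrease any relevant length, exactly in the style of the telescoping length computation in the proof of Lemma~\ref{Lemma: Equal projection chamber on residues}. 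Being mutually inverse surjections, both maps are bijections. The converse is formal: if $\proj_T^R$ is a bijection $R \to T$ then its image is all of $T$, which is precisely $\proj_T R = T$, and symmetrically $\proj_R T = R$, so $R$ and $T$ are parallel.

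For part (c), I would exploit that $\Sigma$ is isometric to $\Sigma(W,S)$ and that $R \cap \Sigma$, $T \cap \Sigma$ are the residues of $\Sigma$ cut out by $R,T$. The essential input is that projection commutes with intersecting by an apartment: for a residue $R$ meeting $\Sigma$ and a chamber $c \in \Sigma$, one has $\proj_R c \in \Sigma$, so $\proj_R c = \proj_{R \cap \Sigma} c$ (convexity of $\Sigma$, as recalled just before Lemma~\ref{Lemma: no apartment contains chambers}). Consequently $\proj_R T = R$ forces $\proj_{R \cap \Sigma}(T \cap \Sigma) = R \cap \Sigma$ and conversely, matching the parallelism condition inside $\Sigma$ with the one in $\Delta$ once we know both $R$ and $T$ meet $\Sigma$. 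I expect the main obstacle to be bookkeeping in part (a): one must be careful that the candidate preimages genuinely lie in the smaller residues $R'$ and $T'$ and that the gate property is applied to the correct nested pair, rather than any conceptual difficulty. Parts (b) and (c) are then largely formal consequences of the gate property and of convexity of apartments.
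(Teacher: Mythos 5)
Your proposal is correct in outline but takes a genuinely different route from the paper: the paper proves nothing here, it simply cites \cite[Propositions $21.8(i)$, $21.10(i)$ and $21.17$]{MPW15} for parts (a), (b) and (c) respectively, whereas you reconstruct all three statements from the gate property. Your reduction of everything to the single identity $\proj_T(\proj_R(\proj_T r)) = \proj_T r$ for $r\in R$ is the right move, and it is also the one place where your sketch still owes an actual computation: this identity does not follow from Lemma~\ref{Lemma: Equal projection chamber on residues} alone, but needs a short length argument (writing $t:=\proj_T r$, $r':=\proj_R t$, $t'':=\proj_T r'$, combine the gate property of $T$ at $r$ and at $r'$, the gate property of $R$ at $t$, and the triangle inequality for $\ell$ to obtain $2\,\ell(t,t'')\le 0$). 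Once that is in place, (a) follows exactly as you describe; in (b) the clean way to get $\proj_R\circ\proj_T=\id_R$ is to use the surjectivity $\proj_R T=R$ you already noted, writing each $x\in R$ as $\proj_R t$ and applying the identity, rather than the vaguer ``cannot decrease any relevant length''; and in (c) the forward direction needs (b) to see that every chamber of $R\cap\Sigma$ is actually hit by $\proj_{R\cap\Sigma}(T\cap\Sigma)$, while the converse needs the additional (easy) observation that $R\cap\Sigma$, being isometric to the full Coxeter group of the type of $R$, lies in no proper subresidue of $R$, so that $\proj_R T\supseteq\proj_R(T\cap\Sigma)=R\cap\Sigma$ already forces $\proj_R T=R$. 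What your approach buys is a self-contained proof from the gate property; what the paper's citation buys is brevity together with the neighbouring facts from \cite{MPW15} (e.g.\ the constancy of $\delta(c,\proj_T c)$ on parallel residues) that are used elsewhere in the text.
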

\begin{proof}
	Part $(a)$ is \cite[Proposition $21.8(i)$]{MPW15}. One implication of part $(b)$ is easy; the other is \cite[Proposition $21.10(i)$]{MPW15}. Part $(c)$ is \cite[Proposition $21.17$]{MPW15}.
\end{proof}

\begin{lemma}\label{Lemma: triangle projection}
	Let $R$ and $T$ be two residues and let $c\in \proj_R T$. Then we have $\proj_R ( \proj_T c ) = c$.
\end{lemma}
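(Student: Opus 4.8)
The goal is to show that for residues $R, T$ and a chamber $c \in \proj_R T$, we have $\proj_R(\proj_T c) = c$. The plan is to use the machinery of parallel residues established in Lemma \ref{Lemma: results about parallel residues}, since the key observation is that $\proj_R T$ and $\proj_T R$ are parallel by part $(a)$, and the projection maps restrict to mutually inverse bijections between them by part $(b)$.

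First I would set $R' := \proj_R T$ and $T' := \proj_T R$, so that $R'$ and $T'$ are parallel residues by Lemma \ref{Lemma: results about parallel residues}$(a)$. By Lemma \ref{Lemma: results about parallel residues}$(b)$, the maps $\proj^{R'}_{T'}: R' \to T'$ and $\proj^{T'}_{R'}: T' \to R'$ are bijections inverse to each other. The chamber $c$ lies in $R'$ by hypothesis, so applying these two maps in succession should return $c$, giving $\proj_{R'}(\proj_{T'} c) = c$.

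The remaining work, and the step I expect to be the main obstacle, is to reconcile the projections onto the full residues $R, T$ with the projections onto the sub-residues $R' = \proj_R T$ and $T' = \proj_T R$. Concretely, I must show that $\proj_T c = \proj_{T'} c$ for $c \in R'$, and that $\proj_R(\proj_T c) = \proj_{R'}(\proj_T c)$. This is exactly the content of Lemma \ref{Lemma: Equal projection chamber on residues}: since $T' \subseteq T$, it suffices to check that $\proj_T c$ already lies in $T'$, which holds because $\proj_T c \in \proj_T R = T'$ (as $c \in R$); and similarly $\proj_R(\proj_T c)$ lies in $R' = \proj_R T$ because $\proj_T c \in T$, so $\proj_R(\proj_T c) = \proj_{R'}(\proj_T c)$.

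Putting these identifications together, I would compute
\[ \proj_R(\proj_T c) = \proj_{R'}(\proj_T c) = \proj_{R'}(\proj_{T'} c) = c, \]
where the first two equalities come from Lemma \ref{Lemma: Equal projection chamber on residues} applied to the nested residues $T' \subseteq T$ and $R' \subseteq R$, and the final equality is the inverse-bijection property from Lemma \ref{Lemma: results about parallel residues}$(b)$. The delicate point throughout is keeping careful track of which residue each projection is taken onto, since the statement mixes projections onto $R$ and $T$ while the parallelism structure naturally lives on the sub-residues $R'$ and $T'$.
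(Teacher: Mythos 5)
Your proposal is correct and follows essentially the same route as the paper's proof: both pass to the parallel residues $R' = \proj_R T$ and $T' = \proj_T R$, use the inverse-bijection property from Lemma \ref{Lemma: results about parallel residues}$(b)$ to get $\proj_{R'}(\proj_{T'}c) = c$, and then apply Lemma \ref{Lemma: Equal projection chamber on residues} to identify the projections onto the sub-residues with those onto $R$ and $T$. You simply spell out the verification of the hypotheses of Lemma \ref{Lemma: Equal projection chamber on residues} in more detail than the paper does.
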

\begin{proof}
	Note that by Lemma \ref{Lemma: results about parallel residues} the residues $R' := \proj_R T$ and $T' :=\proj_T R$ are parallel and we have $\proj_{R'} ( \proj_{T'} c ) = c$. It follows now from Lemma \ref{Lemma: Equal projection chamber on residues} that $\proj_{T'} c = \proj_T c$ and $\proj_{R'} (\proj_T c) = \proj_R (\proj_T c)$.
\end{proof}

\begin{lemma}\label{Lemma: residues and apartments}
	Let $R$ and $T$ be two residues and let $\Sigma$ be an apartment with $\Sigma \cap R \neq \emptyset \neq \Sigma \cap T$. Then the following hold:
	\begin{enumerate}[label=(\alph*)]
		\item $\Sigma \cap \proj_R T = \proj_{\Sigma \cap R} ( \Sigma \cap T )$;
		
		\item $\Sigma \cap \proj_R T$ is a residue of $\Sigma$ having the same type as $\proj_R T$.
	\end{enumerate}
\end{lemma}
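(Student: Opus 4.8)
The plan is to reduce both parts to two elementary consequences of the convexity of $\Sigma$, together with the back-projection identity of Lemma \ref{Lemma: triangle projection}. First I would record the following two observations, valid for any residue $A$ of $\Delta$ of type $K$ with $\Sigma \cap A \neq \emptyset$. Choosing a chamber $c \in \Sigma \cap A$ and transporting along an isometry $\Sigma \cong \Sigma(W, S)$, one sees that $\Sigma \cap A = \{ d \in \Sigma \mid \delta(c, d) \in \langle K \rangle \}$ is a residue of $\Sigma$ of type $K$; I will call this the \emph{intersection fact}. Moreover, by convexity of $\Sigma$ we have $\proj_A x \in \Sigma$ for every $x \in \Sigma$, and since $\proj_A x$ is the gate of $A$ seen from $x$ and lies in $\Sigma \cap A \subseteq A$, it is in particular the gate of $\Sigma \cap A$ seen from $x$. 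By uniqueness of projections computed inside the building $\Sigma$ this yields the key identity
\[ \proj_{\Sigma \cap A} x = \proj_A x \quad \text{for all } x \in \Sigma. \]
Applying the intersection fact to $A = R$ and $A = T$ shows that $\Sigma \cap R$ and $\Sigma \cap T$ are residues of $\Sigma$, so the right-hand side $\proj_{\Sigma \cap R}(\Sigma \cap T)$ appearing in $(a)$ is well-defined.

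For part $(a)$ I would prove the two inclusions separately. The inclusion $\proj_{\Sigma \cap R}(\Sigma \cap T) \subseteq \Sigma \cap \proj_R T$ is routine: for $x \in \Sigma \cap T$ the key identity gives $\proj_{\Sigma \cap R} x = \proj_R x$, which lies in $\proj_R T$ by definition and in $\Sigma$ by convexity. The reverse inclusion $\Sigma \cap \proj_R T \subseteq \proj_{\Sigma \cap R}(\Sigma \cap T)$ is the crux. Given $c \in \Sigma \cap \proj_R T$, Lemma \ref{Lemma: triangle projection} (applicable since $c \in \proj_R T$) gives $\proj_R(\proj_T c) = c$; convexity of $\Sigma$, together with $c \in \Sigma$ and $\Sigma \cap T \neq \emptyset$, forces $\proj_T c \in \Sigma \cap T$; and then the key identity turns this into $c = \proj_R(\proj_T c) = \proj_{\Sigma \cap R}(\proj_T c)$, exhibiting $c$ as the image of the chamber $\proj_T c \in \Sigma \cap T$. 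This completes $(a)$.

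Part $(b)$ then follows quickly. By $(a)$ we have $\Sigma \cap \proj_R T = \proj_{\Sigma \cap R}(\Sigma \cap T)$, which is non-empty because $\Sigma \cap T \neq \emptyset$; hence $\proj_R T$ meets $\Sigma$, and the intersection fact applied to $A = \proj_R T$ shows that $\Sigma \cap \proj_R T$ is a residue of $\Sigma$ of the same type as $\proj_R T$. I expect the only genuinely delicate step to be the reverse inclusion in $(a)$: it is exactly there that one must produce a preimage of $c$ lying inside $\Sigma \cap T$, which is where Lemma \ref{Lemma: triangle projection} and the convexity of $\Sigma$ combine. Everything else is bookkeeping with the gate property and the isometry $\Sigma \cong \Sigma(W, S)$.
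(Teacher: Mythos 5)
Your proof is correct and follows essentially the same route as the paper's: the identity $\proj_{\Sigma\cap A}x=\proj_A x$ for $x\in\Sigma$ via convexity, the reverse inclusion in $(a)$ via Lemma \ref{Lemma: triangle projection} combined with $\proj_T c\in\Sigma\cap T$, and $(b)$ deduced from $(a)$ and non-emptiness. Your explicit ``intersection fact'' merely spells out what the paper leaves implicit in its one-line justification of $(b)$.
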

\begin{proof}
	We first note that for $c\in \Sigma$ we have $\proj_{\Sigma \cap R} c = \proj_R c$, as $\Sigma$ is convex. This implies $\proj_{\Sigma \cap R} ( \Sigma \cap T ) \subseteq \Sigma \cap \proj_R T$. For the other inclusion let $c\in \Sigma \cap \proj_R T$ be a chamber. By Lemma \ref{Lemma: triangle projection} we have $\proj_R ( \proj_T c ) = c$. As $\Sigma$ is convex and $c\in \Sigma$, we have $c' := \proj_T c \in \Sigma \cap T$. But then $c = \proj_R c' = \proj_{\Sigma \cap R} c' \in \proj_{\Sigma \cap R} ( \Sigma \cap T )$. This shows $(a)$. Part $(b)$ follows from $(a)$, as $\Sigma \cap \proj_R T \neq \emptyset$.
\end{proof}

\subsection*{Compatible paths and parallel panels}

Compatible paths were introduced in \cite{DMVM11}. Let $\Delta = (\C, \delta)$ be a building of type $(W, S)$ and let $\Gamma$ be the graph whose vertices are the panels of $\Delta$ and in which two panels form an edge if and only if there exists a rank $2$ residue in which the two panels are opposite. For two adjacent panels $P, Q$, there exists a unique rank $2$ residue containing $P$ and $Q$, which will be denoted by $R(P, Q)$. A path $\gamma = (P_0, \ldots, P_k)$ in $\Gamma$ is called \textit{compatible} if $\proj_{R(P_{i-1}, P_i)} P_0 = P_{i-1}$ holds for all $1 \leq i \leq k$. Let $(P_0, \ldots, P_k)$ be a compatible path. By definition, $(P_0, \ldots, P_i)$ is a compatible path for all $0 \leq i \leq k$. Moreover, $(P_k, \ldots, P_0)$ is a compatible path as well (cf.\ \cite[Proposition $(4.2)(d)$]{BM23}).

\begin{lemma}\label{Lemma: Projection on compatible paths}
	Let $m>1$ and let $(P_0, \ldots, P_m)$ be a compatible path. Then we have $\proj_{R(P_{m-1}, P_m)} R(P_i, P_{i+1}) = \proj_{R(P_{m-1}, P_m)} R(P_0, P_1)$ for all $0 \leq i \leq m-2$.
\end{lemma}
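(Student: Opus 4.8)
The claim is that for a compatible path $(P_0, \ldots, P_m)$ with $m > 1$, the projection of every residue $R(P_i, P_{i+1})$ (for $0 \leq i \leq m-2$) onto the terminal residue $R(P_{m-1}, P_m)$ coincides with the projection of the very first residue $R(P_0, P_1)$. The plan is to reduce everything to projections of \emph{panels}, since the residues in question are rank $2$ residues built from consecutive panels, and then to exploit compatibility together with the transitivity-type properties of projections recorded in Lemmas \ref{Lemma: Equal projection chamber on residues}, \ref{Lemma: triangle projection}, and the parallelism results of Lemma \ref{Lemma: results about parallel residues}.

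**Main strategy: induction on $m$ and a key parallelism observation.**
First I would argue by induction on $m$. The heart of the matter is to understand how compatibility propagates projections. Write $T := R(P_{m-1}, P_m)$. Compatibility of the path $(P_0, \ldots, P_m)$ gives $\proj_T P_0 = P_{m-1}$, and more generally compatibility of every initial subpath controls where the earlier panels project. The plan is to show that $\proj_T R(P_i, P_{i+1})$ is the \emph{same} residue as $\proj_T R(P_{i-1}, P_i)$ for each relevant $i$, which by a telescoping argument down to $i = 0$ yields the statement. To compare $\proj_T R(P_i, P_{i+1})$ with $\proj_T R(P_{i-1}, P_i)$, observe that these two rank $2$ residues share the panel $P_i$. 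I would then use the fact that projection of a rank $2$ residue onto another residue is determined by the projections of the panels it contains (since $\proj_T R$ is itself a residue by Lemma \ref{Lemma: results about parallel residues}(a), being $\proj_{\proj_T R} \cdots$ parallel to $\proj_R T$), and reduce to showing that $\proj_T P_{i+1}$ lies in the residue $\proj_T R(P_{i-1}, P_i)$, i.e.\ that the ``new'' panel does not enlarge the image.

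**The propagation step.**
The technical core is the single-step claim: if $(P_0, \ldots, P_m)$ is compatible then $\proj_T R(P_{i-1}, P_i) = \proj_T R(P_i, P_{i+1})$. To prove this I would pass to an apartment $\Sigma$ containing representatives of all the panels involved (guaranteed since any finite isometric configuration lies in an apartment), and invoke Lemma \ref{Lemma: residues and apartments} to transport the projection computation into $\Sigma$, where $\Sigma \cap \proj_T R$ is a residue of the same type as $\proj_T R$. Inside the thin apartment $\Sigma \cong \Sigma(W,S)$ the compatibility condition $\proj_{R(P_{j-1},P_j)} P_0 = P_{j-1}$ becomes a concrete statement about roots and reflections, and I expect to use the convexity of roots (as in the proof of Lemma \ref{Lemma: no apartment contains chambers}) to show the projection stabilizes. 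Alternatively, and perhaps more cleanly, I would apply Lemma \ref{Lemma: triangle projection}: since $\proj_T P_0 = P_{m-1}$, the chamber $\proj_T c$ for $c \in R(P_0,P_1)$ projects back correctly, forcing the image residues to agree.

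**Expected main obstacle.**
The hardest step will be the single-step propagation claim, specifically controlling that adding the panel $P_{i+1}$ does not change the projected residue. The difficulty is that projection onto $T$ need not commute naively with the passage from a panel to the rank $2$ residue containing it, and parallelism is only guaranteed for the extremal residues $\proj_T R$ and $\proj_R T$ by Lemma \ref{Lemma: results about parallel residues}(a). I would handle this by carefully tracking everything inside a single apartment via Lemma \ref{Lemma: residues and apartments}, where the combinatorics of roots makes the compatibility hypothesis transparent and the thinness of $\Sigma$ rigidly determines all projections; the global statement then follows since $\Sigma \cap \proj_T R$ determines $\proj_T R$.
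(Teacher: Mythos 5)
Your high-level reduction is reasonable, but the proof has a genuine gap exactly where you predict it: the single-step propagation claim is never actually proved, and the tools you propose for it do not suffice. Passing to an apartment $\Sigma$ meeting $R(P_{i-1},P_i)$, $R(P_i,P_{i+1})$ and $T := R(P_{m-1},P_m)$ is possible (the first two share the panel $P_i$, so it is enough to find an apartment containing a chamber of $P_i$ and a chamber of $T$), but your stated justification --- that any finite configuration lies in an apartment --- is false in general: only sets isometric to subsets of $W$ are guaranteed to lie in apartments. More importantly, the compatibility hypothesis is anchored at $P_0$ and at the intermediate residues $R(P_{j-1},P_j)$, none of which need meet your chosen apartment, so the hypothesis cannot be ``transported into $\Sigma$'' and made ``transparent'' there. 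The appeal to Lemma \ref{Lemma: triangle projection} is likewise only a gesture: knowing $\proj_T(\proj_{R(P_0,P_1)} c) = c$ for $c \in \proj_T R(P_0,P_1)$ does not by itself force the image residues to agree.

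The missing idea is the following dichotomy, which is how the paper argues. Since every suffix $(P_i,\ldots,P_m)$ is again a compatible path, the whole lemma reduces to showing $\proj_T R(P_0,P_1) = P_{m-1}$ for an arbitrary compatible path with $m>1$ (then every $\proj_T R(P_i,P_{i+1})$ equals $P_{m-1}$ and no telescoping over consecutive residues is needed). Compatibility gives $P_{m-1} = \proj_T P_0 \subseteq \proj_T R(P_0,P_1)$, and since $\proj_T R(P_0,P_1)$ is a residue inside the rank-$2$ residue $T$ containing the panel $P_{m-1}$, it is either $P_{m-1}$ or all of $T$. In the latter case $T$ and $\proj_{R(P_0,P_1)} T$ are parallel with bijective projections (Lemma \ref{Lemma: results about parallel residues}), but the \emph{reversed} compatible paths $(P_m,\ldots,P_0)$ and $(P_{m-1},\ldots,P_0)$ give $\proj_{R(P_0,P_1)} P_m = P_1 = \proj_{R(P_0,P_1)} P_{m-1}$, and since $P_m$ and $P_{m-1}$ are disjoint (opposite in $T$) this contradicts injectivity. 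Your write-up contains neither the ``panel or everything'' dichotomy nor the use of the reversed compatible path, and these are precisely what make the argument go through.
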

\begin{proof}
	The claim is trivial for $m=2$. Thus we can assume $m>2$. As $(P_i, \ldots, P_m)$ is again a compatible path, it suffices to show $\proj_{R(P_{m-1}, P_m)} R(P_0, P_1) = P_{m-1}$. We define $R_j := R(P_{j-1}, P_j)$ for all $1 \leq j \leq m$. Clearly, we have $P_{m-1} = \proj_{R_m} P_0 \subseteq \proj_{R_m} R_1$. Assume that $\proj_{R_m} R_1 \supsetneq P_{m-1}$. Then, as $\proj_{R_m} R_1$ is a residue, we would have $\proj_{R_m} R_1 = R_m$. By Lemma \ref{Lemma: results about parallel residues}, the residues $\proj_{R_m} R_1 = R_m$ and $R:= \proj_{R_1} R_m$ are parallel and $\proj_R^{R_m}$ is a bijection. But $(P_m, \ldots, P_0)$ and $(P_{m-1}, \ldots, P_0)$ are compatible paths and hence, as $m>1$, $\proj_{R(P_0, P_1)} P_m = P_1 = \proj_{R(P_0, P_1)} P_{m-1}$. This is a contradiction.
\end{proof}

\begin{lemma}\label{Lemma: results about parallel panels}
	Let $P$ and $Q$ be two panels. Then the following hold:
	\begin{enumerate}[label=(\alph*)]
		\item The following are equivalent:
		\begin{enumerate}[label=(\roman*)]
			\item $P$ and $Q$ are parallel.
			
			\item $\vert \proj_Q P \vert \geq 2$;
			
			\item There exists a compatible path from $P$ to $Q$.
		\end{enumerate}
		
		\item If $P$ and $Q$ are parallel and if $R$ is a residue containing $Q$, then $\proj_R P$ is a panel parallel to both $P$ and $Q$.
	\end{enumerate}
\end{lemma}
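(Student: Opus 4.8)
The plan is to prove the two parts of Lemma \ref{Lemma: results about parallel panels} by combining the general theory of parallel residues (Lemma \ref{Lemma: results about parallel residues}) with the characterization of parallelism via compatible paths, reducing most of the work to the rank $2$ situation inside a single apartment.

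For part (a), I would establish the cycle of implications $(i) \Rightarrow (ii) \Rightarrow (iii) \Rightarrow (i)$. The implication $(i) \Rightarrow (ii)$ is immediate: if $P$ and $Q$ are parallel then $\proj_Q P = Q$ by definition, and since $Q$ is a panel it contains at least two chambers, so $\vert \proj_Q P \vert \geq 2$. For $(ii) \Rightarrow (iii)$, the key observation is that $\proj_Q P$ is always a residue contained in $Q$; since $Q$ is a panel, the hypothesis $\vert \proj_Q P \vert \geq 2$ forces $\proj_Q P = Q$, and then I would show how to build a compatible path from $P$ to $Q$. Here I expect to pass to an apartment $\Sigma$ meeting both $P$ and $Q$ and use Lemma \ref{Lemma: residues and apartments} together with Lemma \ref{Lemma: results about parallel residues}(c) to translate the condition into the thin building $\Sigma(W, S)$, where a minimal gallery $(P = R_0 \cap \Sigma, \ldots)$ connecting the two panels can be used to read off a sequence of rank $2$ residues, each yielding an edge of $\Gamma$; the compatibility condition $\proj_{R(P_{i-1}, P_i)} P_0 = P_{i-1}$ should follow from convexity of apartments and the fact that projections in $\Sigma$ agree with those in $\Delta$. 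Finally $(iii) \Rightarrow (i)$ is the hardest implication: given a compatible path $(P_0, \ldots, P_k)$ from $P = P_0$ to $Q = P_k$, I would prove by induction on $k$ that $\proj_{P_k} P_0 = P_k$ and $\proj_{P_0} P_k = P_0$, so that the two panels are parallel. The inductive step is exactly where Lemma \ref{Lemma: Projection on compatible paths} does the work: it lets me compare the projection of $R(P_0, P_1)$ onto the last rank $2$ residue $R(P_{k-1}, P_k)$ with the projection of the earlier residues, and the compatibility hypothesis prevents the projection from collapsing to a single chamber.

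For part (b), assume $P$ and $Q$ are parallel and let $R$ be a residue containing $Q$. The residue $\proj_R P$ is a residue of $R$ of the same type as $\proj_P R$ by Lemma \ref{Lemma: results about parallel residues}(a), and I would argue it has rank $1$, i.e.\ is a panel, by tracking types: since $P$ and $Q$ are parallel panels, $\delta(c, \proj_Q c)$ is a fixed element $w$ conjugating the type of $P$ to the type of $Q$, and the projection to the larger residue $R \supseteq Q$ cannot increase the rank. To see that $\proj_R P$ is parallel to both $P$ and $Q$, I would first invoke Lemma \ref{Lemma: results about parallel residues}(a), which gives that $\proj_R P$ and $\proj_P R$ are parallel; since $\proj_P R$ is a panel contained in $P$ and $P$ is itself a panel, $\proj_P R = P$, so $\proj_R P$ is parallel to $P$. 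Parallelism being transitive on panels (which again follows from part (a) by concatenating compatible paths, or directly from the bijection criterion in Lemma \ref{Lemma: results about parallel residues}(b)), parallelism of $\proj_R P$ with $P$ and of $P$ with $Q$ yields parallelism of $\proj_R P$ with $Q$.

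The main obstacle I anticipate is the implication $(iii) \Rightarrow (i)$ and, relatedly, guaranteeing that the projections appearing along a compatible path never drop to a single chamber; this is precisely the phenomenon controlled by Lemma \ref{Lemma: Projection on compatible paths}, so the technical heart of the argument is setting up the induction so that its hypothesis applies cleanly at each step. A secondary subtlety is the type-tracking in part (b): I must ensure the equality of types provided by Lemma \ref{Lemma: results about parallel residues}(a) genuinely forces rank $1$, which I would handle by noting that parallel residues always have types that are conjugate under the fixed element $\delta(c, \proj_Q c)$, hence of equal cardinality, so $\proj_R P$ inherits rank $1$ from $P$.
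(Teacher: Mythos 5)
The paper does not prove this lemma at all: both parts are quoted from \cite{DMVM11} (Lemmas $13$, $17$ and $19$ there), so any self-contained argument is necessarily a different route. Your overall plan --- reduce $(ii)$ to $\proj_Q P = Q$, build the compatible path inside an apartment, and get $(iii)\Rightarrow(i)$ from the compatibility condition at the last step together with the gate property --- is the standard one and can be made to work. Note in passing that $(iii)\Rightarrow(i)$ needs no induction and no appeal to Lemma \ref{Lemma: Projection on compatible paths}: from $\proj_{R(P_{k-1},P_k)}P_0=P_{k-1}$ and the fact that opposite panels of a spherical rank~$2$ residue are parallel, one gets $\proj_{P_k}P_0=\proj_{P_k}(\proj_{R(P_{k-1},P_k)}P_0)=\proj_{P_k}P_{k-1}=P_k$, and the reverse inclusion follows by applying $(ii)\Rightarrow(i)$.

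Two steps of your sketch are genuinely incomplete as written. First, in $(ii)\Rightarrow(iii)$ you propose to ``read off'' the rank~$2$ residues from a minimal gallery joining the two panels; a minimal gallery does not by itself produce residues in which consecutive panels are \emph{opposite} with the required projection condition. The honest source of this chain is the equivalence $(i)\Leftrightarrow(iii)$ of Lemma \ref{Lemma: CM06Prop2.7} applied to $P\cap\Sigma$ and $Q\cap\Sigma$ in an apartment $\Sigma$ meeting both panels, after which Lemma \ref{Lemma: results about parallel residues}$(c)$ and Lemma \ref{Lemma: residues and apartments} lift the data to $\Delta$; you should cite that rather than a gallery argument. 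Second, and more seriously, in part $(b)$ you invoke ``transitivity of parallelism on panels'' and justify it either by concatenating compatible paths or by the bijection criterion. Neither works as stated: concatenation of compatible paths is \emph{not} automatic (compare the hypotheses of Lemma \ref{Lemma: concatenation of compatible paths}, which needs $2$-completeness and an extra projection condition), and the composite of the two projection bijections $\proj^P_Q\circ\proj^{\,\proj_R P}_P$ need not equal $\proj^{\,\proj_R P}_Q$. The gap is easily closed without any transitivity: since $Q\subseteq R$, for every $c\in P$ one has $\proj_Q c=\proj_Q(\proj_R c)$, hence $\proj_Q(\proj_R P)=\proj_Q P=Q$, so $\vert\proj_Q(\proj_R P)\vert\geq 2$ and part $(a)$ gives that $\proj_R P$ and $Q$ are parallel; parallelism of $\proj_R P$ with $P$ then follows from Lemma \ref{Lemma: results about parallel residues}$(a)$ together with $\proj_P R\supseteq\proj_P Q=P$.
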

\begin{proof}
	Part $(a)$ follows from \cite[Lemma $13$ and Lemma $19$]{DMVM11}; part $(b)$ is \cite[Lemma $17$]{DMVM11}.
\end{proof}

\begin{lemma}\label{Lemma: no combinatorial triangle}
	Suppose that $(W, S)$ is $2$-complete. Let $m>0$, let $(P_0, \ldots, P_m)$ be a compatible path and let $c\in \C$. Suppose that the following two conditions hold:
	\begin{enumerate}[label=(\roman*)]
		\item $\ell( c, \proj_{P_0} c ) < \ell( c, \proj_{P_1} c )$;
		
		\item $\ell( \proj_{R(P_0, P_1)} c, \proj_{P_1} c ) \geq 2$.
	\end{enumerate}
	Then $\ell( c, \proj_{P_0} c ) < \ell( c, \proj_{P_m} c )$ and $\ell( \proj_{R(P_{m-1}, P_m)} c, \proj_{P_m} c ) \geq 2$ hold.
\end{lemma}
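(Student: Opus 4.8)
The plan is to prove, by induction on $i$, the following statement for every $1 \le i \le m$:
\[ \ell(c, \proj_{P_{i-1}} c) < \ell(c, \proj_{P_i} c) \quad\text{and}\quad \ell(\proj_{R(P_{i-1},P_i)} c, \proj_{P_i} c) \ge 2. \]
For $i = 1$ these are exactly hypotheses (i) and (ii). The case $i = m$ of the second inequality is the desired bound on $R(P_{m-1}, P_m)$, and chaining the first inequalities yields $\ell(c, \proj_{P_0} c) < \ell(c, \proj_{P_m} c)$; so it suffices to carry out the inductive step $i-1 \to i$ for $2 \le i \le m$. Throughout I write $R := R(P_{i-1}, P_i)$ and $R^- := R(P_{i-2}, P_{i-1})$, I let $J, J^-$ be their types, and I set $x := \proj_{P_{i-1}} c$ and $e := \proj_{R^-} c$. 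Since $P_{i-1}$ is a panel contained in both $R^-$ and $R$, I first record that compatibility forces $R^- \ne R$: were $R^- = R$, the compatibility conditions $\proj_R P_0 = P_{i-1}$ and $\proj_{R^-} P_0 = P_{i-2}$ would give $P_{i-1} = P_{i-2}$, which is absurd. As $R^-$ and $R$ are then distinct rank-$2$ residues sharing the panel $P_{i-1}$, their types satisfy $J^- \ne J$ and hence $J^- \cap J = \{p\}$, where $p$ is the type of $P_{i-1}$; writing $J = \{p,q\}$, we obtain $q \notin J^-$.

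The heart of the argument is the claim that $\proj_R c = x$. I would prove it through the local characterization of the gate: a chamber of $R$ equals $\proj_R c$ as soon as passing to any $R$-adjacent chamber strictly increases the distance to $c$ (otherwise the first step of a minimal $R$-gallery toward the true gate would be $R$-adjacent to $x$ and strictly closer to $c$). As $R$ has type $\{p,q\}$, the chambers of $R$ adjacent to $x$ lie in $\P_p(x) = P_{i-1}$ or in $\P_q(x)$. In the $p$-direction the distance increases because $x = \proj_{P_{i-1}} c$ is the gate of the panel $P_{i-1}$. For the $q$-direction I would invoke Lemma \ref{Lemma: BiCoxGrowth2.8} with $w := \delta(c,e)$, with the pair $\{s,t\} := J^-$, with $w' := \delta(e, x) \in \langle J^- \rangle$, and with $r := q$: the gate property of $e = \proj_{R^-} c$ gives $\ell(ws) = \ell(w)+1 = \ell(wt)$, the induction hypothesis (ii) at $i-1$ gives $\ell(w') = \ell(e,x) \ge 2$, and $q \notin J^-$ furnishes the admissible exit direction $r \notin \{s,t\}$. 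Since $\delta(c,x) = \delta(c,e)\,\delta(e,x)$ (again by the gate property of $e$, as $x \in P_{i-1} \subseteq R^-$), the lemma yields $\ell(\delta(c,x)\,q) = \ell(c,x) + 1$, and (Bu2) upgrades this to $\ell(c,z) = \ell(c,x)+1$ for every $z \in \P_q(x)$. Thus $x$ is a local minimum of $\ell(c,\cdot)$ on $R$, whence $\proj_R c = x$.

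Granting the claim, both conclusions for $i$ follow from rank-$2$ geometry in $R$. Because $P_{i-1}$ and $P_i$ are opposite panels of the generalized $m_{pq}$-gon $R$ and $x \in P_{i-1}$, some chamber of $P_i$ is at distance $m_{pq}$ from $x$, so the gate $\proj_{P_i} x$ is at distance $m_{pq}-1$; as $R$ is $2$-spherical with complete diagram, $m_{pq} \ge 3$ and therefore $\ell(x, \proj_{P_i} x) = m_{pq}-1 \ge 2$. Using $x = \proj_R c$ together with $\proj_{P_i} c = \proj_{P_i}(\proj_R c) = \proj_{P_i} x$, this gives $\ell(\proj_R c, \proj_{P_i} c) \ge 2$, the second inequality, and also $\ell(c, \proj_{P_i} c) = \ell(c,x) + \ell(x, \proj_{P_i} x) > \ell(c, \proj_{P_{i-1}} c)$, the first. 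I expect the main obstacle to be the verification of the hypotheses of Lemma \ref{Lemma: BiCoxGrowth2.8}, and in particular the clean identification of the role of compatibility: it is exactly the input that prevents the path from doubling back inside a single rank-$2$ residue, thereby guaranteeing the genuinely new exit direction $q \notin J^-$ that the $2$-complete growth estimate requires.
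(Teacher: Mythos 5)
Your proof is correct and follows essentially the same route as the paper: induction along the path, using the gate property of $R(P_{i-2},P_{i-1})$ together with Lemma \ref{Lemma: BiCoxGrowth2.8} to conclude $\proj_{R(P_{i-1},P_i)}c=\proj_{P_{i-1}}c$, and then reading off both inequalities from the fact that $P_{i-1}$ and $P_i$ are opposite in a residue whose gonality is at least $3$ by $2$-completeness. You merely spell out the details the paper compresses into a one-line invocation of that lemma (the distinctness of consecutive rank-$2$ residues forced by compatibility, and the local characterization of the gate), which is a faithful expansion rather than a different argument.
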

\begin{proof}
	We will show the claim by induction on $m$. For $m=1$ this is exactly the assumption. Thus we can assume $m>1$. Using induction the following hold:
	\begin{itemize}
		\item $\ell( c, \proj_{P_0} c ) < \ell( c, \proj_{P_{m-1}} c )$;
		\item $\ell( \proj_{R(P_{m-2}, P_{m-1})} c, \proj_{P_{m-1}} c ) \geq 2$.
	\end{itemize}
	Now Lemma \ref{Lemma: BiCoxGrowth2.8} yields $\proj_{R(P_{m-1}, P_m)} c = \proj_{P_{m-1}} c$ and we infer
	\allowdisplaybreaks
	\begin{align*}
		\ell( c, \proj_{P_m} c ) &= \ell( c, \proj_{R(P_{m-1}, P_m)} c ) + \ell( \proj_{R(P_{m-1}, P_m)} c, \proj_{P_m} c ) \\
		&\geq \ell( c, \proj_{P_{m-1}} c ) \\
		&> \ell( c, \proj_{P_0} c )
	\end{align*}
	Note that $P_{m-1}$ and $P_m$ are opposite in $R(P_{m-1}, P_m)$. As $(W, S)$ is $2$-complete, we conclude $\ell( \proj_{R(P_{m-1}, P_m)} c, \proj_{P_m} c ) \geq 2$. This finishes the proof.
\end{proof}

\begin{lemma}\label{Lemma: concatenation of compatible paths}
	Suppose that $(W, S)$ is $2$-complete. Let $(P_0, \ldots, P_m)$ and $(Q_0, \ldots, Q_n)$ be two compatible paths such that $P_m$ and $Q_0$ are opposite in a rank $2$ residue. If $\proj_{R(P_m, Q_0)} P_0 = P_m$, then $(P_0, \ldots, P_m, Q_0, \ldots, Q_n)$ is a compatible path.
\end{lemma}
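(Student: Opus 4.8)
The plan is to check the definition of a compatible path termwise for the concatenation. That consecutive panels are edges of $\Gamma$ is clear: the edges inside $(P_0, \ldots, P_m)$ and inside $(Q_0, \ldots, Q_n)$ are given, and the edge $\{P_m, Q_0\}$ holds because these two panels are opposite in a rank $2$ residue. It remains to verify $\proj_{R} P_0 = (\text{near panel})$ for every consecutive rank $2$ residue $R$ of the combined path. For residues inside the $P$-part this is the compatibility of $(P_0, \ldots, P_m)$, and for $R(P_m, Q_0)$ it is exactly the hypothesis $\proj_{R(P_m, Q_0)} P_0 = P_m$. Hence the whole content is to prove, for all $1 \le j \le n$,
\[ \proj_{R(Q_{j-1}, Q_j)} P_0 = Q_{j-1}. \]

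I would prove this by induction on $j$. Writing $S_j := R(Q_{j-1}, Q_j)$, it suffices to show for each chamber $c \in P_0$ that $\proj_{S_j} c = \proj_{Q_{j-1}} c$ and that this chamber lies in $Q_{j-1}$; as $c$ ranges over $P_0$ this gives $\proj_{S_j} P_0 = \proj_{Q_{j-1}} P_0 = Q_{j-1}$, the last equality because $P_0$ and $Q_{j-1}$ are parallel (the prefix $(P_0, \ldots, Q_{j-1})$ is a compatible path by the inductive hypothesis, so Lemma \ref{Lemma: results about parallel panels} applies). The inductive input at stage $j$ is the compatibility at the previous edge: with $R' := R(Q_{j-2}, Q_{j-1})$ (read as $R(P_m, Q_0)$ when $j = 1$), the panel opposite to $Q_{j-1}$ in $R'$ is $Q_{j-2}$ (resp.\ $P_m$), and $\proj_{R'} P_0$ equals that opposite panel, which is the statement for $j-1$ (resp.\ the hypothesis).

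The heart of the proof is the following chamber-wise computation, and it is exactly here that $2$-completeness enters through Lemma \ref{Lemma: BiCoxGrowth2.8}. Fix $c \in P_0$ and put $p := \proj_{R'} c$, which lies in the panel of $R'$ opposite $Q_{j-1}$, and $q := \proj_{Q_{j-1}} c = \proj_{Q_{j-1}} p$ (using $Q_{j-1} \subseteq R'$). Let $u$ denote the type of $Q_{j-1}$, let $a$ be the second type of $R'$ and $b$ the second type of $S_j$. The Weyl distance $w := \delta(c, q)$ factors as $w = w_0 v_1$ with $w_0 := \delta(c, p)$ and $v_1 := \delta(p, q) \in \langle u, a\rangle$, the lengths adding since $p = \proj_{R'} c$; moreover $w_0$ has both $u$ and $a$ as ascents (it is the minimal coset representative, $p$ being the gate of $R'$), and $\ell(v_1) = m_{ua} - 1 \ge 2$ because $p$ is opposite $Q_{j-1}$ in $R'$ and $2$-completeness forces $m_{ua} \ge 3$. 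Applying Lemma \ref{Lemma: BiCoxGrowth2.8} with $(w_0, u, a, v_1, b)$ in the roles of $(w, s, t, w', r)$ yields $\ell(wb) = \ell(w_0 v_1 b) = \ell(w_0) + \ell(v_1) + 1 = \ell(w) + 1$. Since also $\ell(wu) = \ell(w) + 1$ (as $q = \proj_{Q_{j-1}} c$ and $Q_{j-1}$ has type $u$), both generators of the type $\{u, b\}$ of $S_j$ are ascents of $\delta(c, q)$; by the minimal-coset-representative description of projections this forces $q = \proj_{S_j} c$, which is what is needed.

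The main obstacle is precisely this last step, namely showing that the gate $q$ from $c$ to the panel $Q_{j-1}$ is already the gate to the whole residue $S_j$, i.e.\ that the ``new'' direction $b$ increases the distance; this is the content of Lemma \ref{Lemma: BiCoxGrowth2.8} and is where $2$-completeness is indispensable (it gives $m_{ua} \ge 3$, hence $\ell(v_1) \ge 2$). The only point demanding care is the type bookkeeping: I must know that $R'$ and $S_j$ are distinct rank $2$ residues, so that their types $\{u, a\}$ and $\{u, b\}$ meet only in $u$, whence $u, a, b$ are pairwise distinct and $b \in S \setminus \{u, a\}$ as Lemma \ref{Lemma: BiCoxGrowth2.8} requires. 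For $j \ge 2$ this distinctness follows from the compatibility of $(Q_0, \ldots, Q_n)$ (a compatible path never backtracks, since $\proj_{R(Q_{j-2},Q_{j-1})}Q_0 = Q_{j-2}$ and $\proj_{R(Q_{j-1},Q_j)}Q_0 = Q_{j-1}$ would collapse if the two residues coincided), and at the junction $j = 1$ it amounts to $P_m \ne Q_1$, i.e.\ $R(P_m, Q_0) \ne R(Q_0, Q_1)$, which I would record as the one place where the combined sequence being a genuine (non-backtracking) path is used.
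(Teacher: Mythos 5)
Your proof is correct and follows essentially the same route as the paper's: induction along the $Q$-part, with the key step being the chamber-wise identity $\proj_{R(Q_{j-1},Q_j)}c=\proj_{Q_{j-1}}c$ obtained from Lemma \ref{Lemma: BiCoxGrowth2.8} (using that opposite panels in a rank-$2$ residue of a $2$-complete system are at gate-distance $\geq 2$), followed by parallelism of $P_0$ and $Q_{j-1}$ via Lemma \ref{Lemma: results about parallel panels}$(a)$. Your explicit verification that consecutive rank-$2$ residues are distinct (so that the third type really lies in $S\setminus\{s,t\}$) is a detail the paper leaves implicit, but it does not change the argument.
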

\begin{proof}
	We show the claim by induction on $n$. For $n=0$ the claim follows by definition. Thus we can assume $n>0$. Using induction, $(P_0, \ldots, P_m, Q_0, \ldots, Q_{n-1})$ is a compatible path. We define $P_{m+i+1} := Q_i$ for $0 \leq i \leq n$. We have to show that $\proj_{R(P_{m+n}, P_{m+n+1})} P_0 = P_{m+n}$. As $(P_0, \ldots, P_{m+n})$ is a compatible path, we have $\proj_{R(P_{m+n-1}, P_{m+n})} P_0 = P_{m+n-1}$. As $P_{m+n-1}$ and $P_{m+n}$ are opposite in $R(P_{m+n-1}, P_{m+n})$, we deduce from Lemma \ref{Lemma: BiCoxGrowth2.8} that $\proj_{R(P_{m+n}, P_{m+n+1})} p = \proj_{P_{m+n}} p$ holds for all $p \in P_0$. As $P_0$ and $P_{m+n}$ are parallel by Lemma \ref{Lemma: results about parallel panels}$(a)$, we infer $\proj_{R(P_{m+n}, P_{m+n+1})} P_0 = \proj_{P_{m+n}} P_0 = P_{m+n}$. This finishes the proof.
\end{proof}

\begin{lemma}\label{Lemma: parallel panels - word become longer}
	Suppose that $(W, S)$ is $2$-complete. Let $m>0$, let $(P_0, \ldots, P_m)$ be a compatible path, let $c\in P_0$ and let $d \in R(P_{m-1}, P_m) \backslash P_{m-1}$. Let $\{s, t\}$ be the type of $R(P_{m-1}, P_m)$ for $s\neq t \in S$. Then $\ell(\delta(c, d)r) = \ell(c, d) +1$ holds for each $r\in S \backslash \{s, t\}$.
\end{lemma}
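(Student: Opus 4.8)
The plan is to rephrase the conclusion as a statement about right descents of $\delta(c,d)$, to dispose of all but one chamber $d$ by a direct application of Lemma \ref{Lemma: BiCoxGrowth2.8}, and to treat the remaining boundary chamber by induction on $m$. Relabelling if necessary, assume $P_{m-1}$ has type $\{s\}$ (so $P_m$ has type $\{t\}$), and put $R := R(P_{m-1},P_m)$ and $z := \proj_R c$. Since $(P_0,\dots,P_{m-1})$ is compatible we have $\proj_R P_0 = P_{m-1}$, whence $z \in P_{m-1}$, and by Lemma \ref{Lemma: Equal projection chamber on residues} also $z = \proj_{P_{m-1}} c$. For every $d \in R$ the gate property gives $\delta(c,d) = w\,w'$ with $\ell(c,d) = \ell(w) + \ell(w')$, where $w := \delta(c,z)$ and $w' := \delta(z,d) \in \langle s,t\rangle$; the same property yields $\ell(ws) = \ell(w)+1 = \ell(wt)$. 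The assertion is precisely that no $r \in S \setminus \{s,t\}$ is a right descent of $ww'$, and $d \notin P_{m-1} = \P_s(z)$ forces $w' \notin \{1,s\}$.

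If $\ell(w') \ge 2$, then $w,w'$ and any $r \in S\setminus\{s,t\}$ satisfy the hypotheses of Lemma \ref{Lemma: BiCoxGrowth2.8}, so $\ell(ww'r) = \ell(w)+\ell(w')+1 = \ell(ww')+1$, as required. This settles every $d$ except the unique chamber with $w' = t$, for which $\delta(c,d) = wt$; for it I must show $\ell(wtr) = \ell(wt)+1$ for all $r \in S\setminus\{s,t\}$. For $m=1$ this is trivial, since $w=1$ and $\delta(c,d)\in\langle s,t\rangle$, so assume $m\ge 2$.

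The engine of the induction is the auxiliary claim that, for $c \in P_0$ and every $k \ge 1$, the element $\delta(c,\proj_{P_k}c)$ admits a reduced word ending in $b_k a_k$ and has $a_k$ as its only right descent, where $a_k$ and $b_k$ are the types of $P_{k-1}$ and $P_k$. Its inductive step factors $\delta(c,\proj_{P_k}c) = \delta(c,\proj_{P_{k-1}}c)\,v_k$ with $v_k := \delta(\proj_{P_{k-1}}c,\proj_{P_k}c)$ lying in the rank-two parabolic of type $R(P_{k-1},P_k)$ and of length $\ge 2$ (here $2$-completeness enters); Lemma \ref{Lemma: BiCoxGrowth2.8} confines the right descents of the product to $\{a_k,b_k\}$, and the gate property at $\proj_{P_k}c$ discards $b_k$. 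Applying this with $k=m-1$ shows that $w$ has unique right descent $u := a_{m-1}$, the type of $P_{m-2}$, and ends in $su$; note $s,t,u$ are pairwise distinct, since $R(P_{m-2},P_{m-1})$ and $R$ are distinct residues sharing the panel $P_{m-1}$.

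Now fix $r \in S\setminus\{s,t\}$. If $r \ne u$, then neither $t$ nor $r$ is a right descent of $w$, so $w$ is the minimal representative of $w\langle t,r\rangle$ and $\ell(wtr) = \ell(w)+2 = \ell(wt)+1$. The only remaining possibility is $r = u$, which in rank $3$ is in fact the sole case. Here is where the hard part lies: appending the second generator $u$ after $t$ lies outside the scope of Lemma \ref{Lemma: BiCoxGrowth2.8}, so one must control \emph{all} reduced words of $wt$. Writing $x\in\langle t,u\rangle$ for the longest suffix of $w$ lying in $\langle t,u\rangle$, a short dihedral computation shows that $u$ is a right descent of $wt$ if and only if $\ell(x)=m_{tu}-1$. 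Since $w$ ends in $su$, deleting the final $u$ gives $wu$, which ends in $s$ and has $u$ as an ascent; if one shows that $s$ is the \emph{only} right descent of $wu$, then $x=u$ has length $1<m_{tu}-1$, so $u$ is not a right descent of $wt$ and the proof is complete. This last statement about $wu$ is of the same type as the auxiliary claim and is obtained by feeding $\delta(c,\proj_{P_{m-2}}c)$ back into the induction (Lemma \ref{Lemma: BiCoxGrowth2.8} closes it outright unless $m_{su}=3$, in which case the tail information furnished by the auxiliary claim is needed). This recursive bookkeeping of descent sets along the compatible path is the main obstacle; everything else follows routinely from the gate property and Lemma \ref{Lemma: BiCoxGrowth2.8}.
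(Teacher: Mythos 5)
Your opening reduction (dispose of all $d$ with $\ell(\proj_R c,d)\ge 2$ by Lemma \ref{Lemma: BiCoxGrowth2.8}, leaving only the chamber $t$-adjacent to $\proj_R c$) is exactly the paper's first step, and your case $r\neq u$ is fine. But there are two problems. First, your auxiliary claim is misstated: the unique right descent of $\delta(c,\proj_{P_k}c)$ is the generator of $\mathrm{typ}(R(P_{k-1},P_k))$ \emph{other than} $\mathrm{typ}(P_k)$, which coincides with $\mathrm{typ}(P_{k-1})$ only when the relevant $m_{xy}$ is odd; when it is even (e.g.\ in type $(4,4,4)$, the paper's motivating case) opposite panels have the same type, $a_k=b_k$, and ``a reduced word ending in $b_ka_k$'' is vacuous and the claimed descent is the wrong generator. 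This is repairable by relabelling ($u$ should be the paper's $q$, the second letter of $\mathrm{typ}(R(P_{m-2},P_{m-1}))$, and your justification that $s,t,u$ are pairwise distinct is in fact the justification for \emph{that} choice of $u$), but as written the claim is false.

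Second, and more seriously, the case $r=u$ is not closed. You correctly reduce it to showing that $s$ is the only right descent of $wu$, but when $m_{su}=3$ the ``tail information'' from the auxiliary claim (the last two letters of \emph{one} reduced word of $\delta(c,\proj_{P_{m-2}}c)$, plus its unique descent $u'$) does not determine whether $u'$ is a right descent of $\delta(c,\proj_{P_{m-2}}c)\,s$: your own dihedral criterion shows this depends on the length of the longest $\langle s,u'\rangle$-suffix, which forces another round of the same argument one level down the path. You flag this ``recursive bookkeeping'' as ``the main obstacle'' without resolving it, so the proof is incomplete precisely at its hardest point. The paper avoids the issue by choosing a different induction: it inducts on the \emph{lemma's own statement} over the truncated path $(P_0,\dots,P_{m-1})$, applied to the chamber $d':=\proj_{\P_q(e)}c\in R(P_{m-2},P_{m-1})\setminus P_{m-2}$ (which is exactly your $wu=\delta(c,d')$). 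The inductive hypothesis then immediately confines the right descents of $\delta(c,d')$ to $\{s,q\}$, the gate property excludes $q$, and two minimal-coset-representative arguments (one for $r=q$ using $m_{qt}>2$, one for $r\neq q$) finish the proof with no case split on $m_{sq}$ and no suffix bookkeeping. If you replace your auxiliary claim by that induction hypothesis, your argument collapses to the paper's.
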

\begin{proof}
	By Lemma \ref{Lemma: BiCoxGrowth2.8} it suffices to show the claim for $d$ adjacent to $\proj_{R(P_{m-1}, P_m)} c$. We prove the claim by induction on $m$. For $m=1$ the claim follows from \cite[Lemma $2.15$]{AB08}. Suppose now $m>1$. Let $s\in S$ be the type of $P_{m-1}$ and let $\{ s, q \}$ be the type of $R(P_{m-2}, P_{m-1})$. We define $d' := \proj_{\P_q(e)} c$ where $e := \proj_{R(P_{m-1}, P_m)} c$. Then $d' \in R(P_{m-2}, P_{m-1}) \backslash P_{m-2}$. Using induction, we have
	\[ \ell(\delta(c, d')r') = \ell(c, d') +1 \]
	for each $r' \in S \backslash \{ s, q \}$. Now let $r \in S \backslash \{ s, t \}$. If $r = q$, then the claim follows from the previous equation with $r'=t$ and the fact that $m_{rt} >2$. If $r\neq q$, then the previous equation implies $\ell( \delta(c, e ) r) = \ell(c, e) +1$ which itself yields the claim.
\end{proof}

\begin{lemma}\label{Lemma: stabilized implies parallel}
	Let $P$ and $Q$ be two panels and let $H \leq \Stab(P) \cap \Stab(Q)$ be a subgroup which does not fix a chamber in $P$. Then the following hold:
	\begin{enumerate}[label=(\alph*)]
		\item The panels $P$ and $Q$ are parallel.
		
		\item For a residue $R$ containing $Q$, $\proj_R P$ is a panel and $H \leq \Stab(\proj_R P)$.
	\end{enumerate}
\end{lemma}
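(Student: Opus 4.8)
The plan is to build everything on the $\Aut(\Delta)$-equivariance of projections: for any $\phi\in\Aut(\Delta)$, any residue $R$ and any chamber $c$ one has $\phi(\proj_R c)=\proj_{\phi(R)}\phi(c)$, because $\phi$ is a type-preserving isometry (so $\phi(R)$ is again a residue) and $\proj_R c$ is characterised purely through the length function $\ell(c,\cdot)$ on $R$. Consequently $\phi(\proj_R T)=\proj_{\phi(R)}\phi(T)$ for any residues $R,T$. Applying this to the elements of $H$, which fix $P$ and $Q$ setwise, I first record that $H$ stabilises both $\proj_Q P$ and $\proj_P Q$ as sets; this equivariance is the single device driving both parts.

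For part (a) I would argue by contradiction. Suppose $P$ and $Q$ are not parallel. Applying Lemma \ref{Lemma: results about parallel panels}(a) to the pair $(Q,P)$, this forces $\lvert\proj_P Q\rvert\le 1$. Since $\proj_P Q$ is a non-empty residue contained in the panel $P$ (cf.\ \cite[Lemma~5.36(2)]{AB08}) and $P$ has rank $1$, it must consist of a single chamber $p\in P$. But $H$ stabilises $\proj_P Q=\{p\}$ by the equivariance above, hence fixes the chamber $p\in P$, contradicting the hypothesis that $H$ fixes no chamber of $P$. Therefore $P$ and $Q$ are parallel.

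For part (b), that $\proj_R P$ is a panel parallel to both $P$ and $Q$ is immediate from part (a) together with Lemma \ref{Lemma: results about parallel panels}(b). The only genuine point is the $H$-invariance of $\proj_R P$, and here the key observation is that $R$ \emph{itself} is $H$-invariant: writing $J$ for the type of $R$, the residue $R$ is the unique $J$-residue containing $Q$ (the $J$-residue through any given chamber is unique, so any $J$-residue meeting $Q$ coincides with $R$); since every $h\in H$ is type-preserving and fixes $Q$ setwise, $h(R)$ is again a $J$-residue containing $Q$, whence $h(R)=R$. Equivariance then yields $h(\proj_R P)=\proj_{h(R)}h(P)=\proj_R P$ for all $h\in H$, that is, $H\le\Stab(\proj_R P)$. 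The step I expect to be the crux is precisely this $H$-invariance of $R$: without it there is no reason for $H$ to preserve $\proj_R P$, because $H$ need not preserve an arbitrary residue merely containing $Q$; once one notices that a residue is pinned down by its type together with any single panel it contains, the remainder is routine.
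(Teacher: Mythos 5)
Your proposal is correct and follows essentially the same route as the paper: both parts rest on the equivariance of projections under type-preserving automorphisms together with Lemma \ref{Lemma: results about parallel panels}, with your part (a) being merely the contrapositive of the paper's direct verification that $\lvert \proj_P Q\rvert \geq 2$. Your explicit justification that $H \leq \Stab(R)$ (via uniqueness of the $J$-residue containing $Q$) spells out a step the paper's proof uses silently in the chain $\Stab(P)\cap\Stab(Q) \leq \Stab(P)\cap\Stab(R)$, but this is a detail, not a different argument.
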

\begin{proof}
	Let $q \in Q$ and put $p := \proj_P q$. As $H \leq \Stab(P) \cap \Stab(Q)$ does not fix $p$, there exists $h \in H$ with $h.p \neq p$. Using the uniqueness of the projection chamber, we infer
	\[ \proj_P (h.q) = \proj_{h.P} (h.q) = h.p \neq p \]
	Thus $\vert \proj_P Q \vert \geq 2$ and Lemma \ref{Lemma: results about parallel panels}$(a)$ implies that $P$ and $Q$ are parallel.
	
	Now let $R$ be a residue containing $Q$. Then $\proj_RP$ is a panel parallel to both $P$ and $Q$ by Lemma \ref{Lemma: results about parallel panels}$(b)$. Let $x\in \proj_R P$ and $h\in H$. We have to show that $h.x \in \proj_R P$. There exists $p \in P$ with $x = \proj_R p$. Using the uniqueness of the projection chamber and the fact that $H \leq \Stab(P) \cap \Stab(Q) \leq \Stab(P) \cap \Stab(R)$, we conclude
	\[ h.x = h.(\proj_R p) = \proj_{h.R} (h.p) = \proj_R (h.p) \in \proj_R P. \qedhere \]
\end{proof}

\section{Coxeter buildings}\label{Section: Coxeter buildings}

In this section we consider the Coxeter building $\Sigma(W, S)$. For $\alpha \in \Phi$ we denote by $\partial \alpha$ (resp.\ $\partial^2 \alpha$) the set of all panels (resp.\ spherical residues of rank $2$) stabilized by $r_{\alpha}$. We note that for $\alpha, \beta \in \Phi$ with $\alpha \neq \pm \beta$ we have $o(r_{\alpha} r_{\beta}) < \infty$ if and only if $\partial^2 \alpha \cap \partial^2 \beta \neq \emptyset$ (cf. \cite[Lemma $2.8(a)$]{BiConstruction}).

The set $\partial \alpha$ is called the \emph{wall} associated with $\alpha$. Let $G = (c_0, \ldots, c_k)$ be a gallery. We say that $G$ \emph{crosses the wall $\partial \alpha$} if there exists $1 \leq i \leq k$ such that $\{ c_{i-1}, c_i \} \in \partial \alpha$. It is a basic fact that a minimal gallery crosses a wall at most once (cf.\ \cite[Lemma $3.69$]{AB08}). Moreover, a gallery which crosses each wall at most once is already minimal.

\begin{lemma}\label{Lemma: CM06Prop2.7}
	Let $R, T$ be two spherical residues of $\Sigma(W, S)$. Then the following are equivalent:
	\begin{enumerate}[label=(\roman*)]
		\item $R$ and $T$ are parallel;
		
		\item a reflection of $\Sigma(W, S)$ stabilizes $R$ if and only if it stabilizes $T$;
		
		\item there exist two sequences $R_0 = R, \ldots, R_n = T$ and $T_1, \ldots, T_n$ of residues of spherical type such that for each $1 \leq i \leq n$ the rank of $T_i$ is equal to $1+\mathrm{rank}(R)$, the residues $R_{i-1}, R_i$ are contained and opposite in $T_i$ and moreover, we have $\proj_{T_i} R = R_{i-1}$ and $\proj_{T_i} T = R_i$.
	\end{enumerate}
\end{lemma}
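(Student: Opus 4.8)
The plan is to establish the equivalence (i)$\Leftrightarrow$(ii) directly, and then to fold in (iii) by proving (iii)$\Rightarrow$(ii) and (i)$\Rightarrow$(iii). Throughout I identify the chambers of $\Sigma(W,S)$ with the elements of $W$, so that a spherical residue is a coset $R=u\langle J\rangle$ with $J$ spherical, and a reflection $r\in W$ stabilises $R$ precisely when $u^{-1}ru\in\langle J\rangle$; equivalently the set $\mathrm{Refl}(R)$ of reflections of $\Sigma(W,S)$ stabilising $R$ equals $u\,\mathrm{Refl}(\langle J\rangle)\,u^{-1}$, where $\mathrm{Refl}(\langle J\rangle)$ denotes the reflections of the finite Coxeter system $(\langle J\rangle,J)$. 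Two remarks will be used repeatedly: $\langle\mathrm{Refl}(R)\rangle=u\langle J\rangle u^{-1}$ is a finite reflection group of rank $\lvert J\rvert=\operatorname{rank}(R)$, and if $X\subseteq R$ is a subresidue then $\mathrm{Refl}(X)\subseteq\mathrm{Refl}(R)$. For (i)$\Rightarrow$(ii) I would invoke that for parallel $R,T$ the element $w:=\delta(c,\proj_T c)$ is independent of $c\in R$ (\cite[Proposition $21.10$]{MPW15}); written multiplicatively this reads $\proj_T c=cw$, so $T=\proj_T R=Rw$ as subsets of $W$. Since $W$ acts by left multiplication whereas the bijection $R\to T$ is right multiplication by $w$, any reflection $r$ with $rR=R$ satisfies $rT=r(Rw)=(rR)w=Rw=T$, and the converse follows symmetrically from $R=Tw^{-1}$; hence $\mathrm{Refl}(R)=\mathrm{Refl}(T)$.

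The implication (ii)$\Rightarrow$(i) I would base on the identity $\mathrm{Refl}(\proj_R T)=\mathrm{Refl}(R)\cap\mathrm{Refl}(T)$. The inclusion ``$\subseteq$'' combines $\proj_R T\subseteq R$ (giving $\mathrm{Refl}(\proj_R T)\subseteq\mathrm{Refl}(R)$) with Lemma~\ref{Lemma: results about parallel residues}(a), which makes $\proj_R T$ and $\proj_T R$ parallel, so that the already proved (i)$\Rightarrow$(ii) yields $\mathrm{Refl}(\proj_R T)=\mathrm{Refl}(\proj_T R)\subseteq\mathrm{Refl}(T)$. For ``$\supseteq$'' I use equivariance of projection under $W\le\Aut(\Sigma(W,S))$: if $r$ stabilises both $R$ and $T$ then $r\cdot\proj_R T=\proj_{rR}(rT)=\proj_R T$. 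Granting the identity, $\mathrm{Refl}(R)=\mathrm{Refl}(T)$ forces $\mathrm{Refl}(\proj_R T)=\mathrm{Refl}(R)$, whence $\langle\mathrm{Refl}(\proj_R T)\rangle=\langle\mathrm{Refl}(R)\rangle$ have equal rank; as $\proj_R T\subseteq R$, this equality of ranks gives $\proj_R T=R$, and symmetrically $\proj_T R=T$, so $R$ and $T$ are parallel.

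For (iii)$\Rightarrow$(ii) it is enough to handle one step, namely that opposite residues $R_{i-1},R_i$ in a spherical residue $T_i$ satisfy $\mathrm{Refl}(R_{i-1})=\mathrm{Refl}(R_i)$; chaining then gives $\mathrm{Refl}(R)=\mathrm{Refl}(T)$, and the projection hypotheses on the $T_i$ are not even needed here. After left-translating so that $T_i=\langle K\rangle$, one has $R_{i-1}=v\langle J\rangle$ and, writing $w_0:=r_K$ for the longest element of $\langle K\rangle$, the opposite residue is $R_i=vw_0\langle w_0Jw_0\rangle$. Since conjugation by $w_0$ carries $\mathrm{Refl}(\langle J\rangle)$ onto $\mathrm{Refl}(\langle w_0Jw_0\rangle)$ and $w_0^2=1$, a direct computation gives $\mathrm{Refl}(R_i)=vw_0\,\mathrm{Refl}(\langle w_0Jw_0\rangle)\,w_0v^{-1}=v\,\mathrm{Refl}(\langle J\rangle)\,v^{-1}=\mathrm{Refl}(R_{i-1})$.

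Finally, (i)$\Rightarrow$(iii) is the construction of the chain, and this is where I expect the real work to lie. I would induct on $\ell(w)$ with $w=\delta(R,T)$: if $w=1$ then $T=Rw=R$ and the empty chain ($n=0$) suffices. Otherwise the gate property forces $\ell(sw)>\ell(w)$ for all $s\in J$, so a reduced word for $w$ begins with some $s\notin J$; the aim is to choose such a step so that $K:=J\cup\{s\}$ is spherical, to set $T_1:=R_K(u)$ and let $R_1$ be opposite to $R$ in $T_1$, and then to check $\proj_{T_1}R=R$, $\proj_{T_1}T=R_1$, and that $R_1\parallel T$ with $\delta(R_1,T)$ strictly shorter, after which induction finishes. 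The delicate point, and the crux of the lemma, is to guarantee at every stage a \emph{spherical} enlargement $T_1$ that genuinely moves $R$ closer to $T$: opposite residues exist only inside spherical ones, so one cannot simply adjoin an arbitrary generator. Here I would exploit that $R$ and $T$ share all their walls — equivalently, that the finite reflection group $\langle\mathrm{Refl}(R)\rangle=\langle\mathrm{Refl}(T)\rangle$ stabilises both — so that the passage from $R$ to $T$ can be threaded through rank-$2$ links that are automatically spherical, and this is the step on which I would concentrate the effort.
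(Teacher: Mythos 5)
The paper gives no argument of its own for this lemma---its ``proof'' is the citation \cite{CM06}---so the only question is whether your reconstruction is complete. Three of the four implications you supply are correct and cleanly executed: (i)$\Rightarrow$(ii) via the constant displacement $w=\delta(c,\proj_T c)$ and the fact that left multiplication (the $W$-action) commutes with right multiplication by $w$; (ii)$\Rightarrow$(i) via the identity $\mathrm{Refl}(\proj_R T)=\mathrm{Refl}(R)\cap\mathrm{Refl}(T)$, whose two inclusions you justify correctly, together with the observation that for standard parabolics $I\subseteq J$ with $\langle I\rangle=\langle J\rangle$ one gets $I=J$, hence $\proj_R T=R$; and (iii)$\Rightarrow$(ii) by the conjugation computation with $w_0=r_K$.

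The genuine gap is (i)$\Rightarrow$(iii), which you explicitly leave open, and it is not a cosmetic omission: the chain construction is the substantive content of the proposition and is exactly the direction this paper relies on (for instance, Lemma \ref{Lemma: 2-dimensional rank 2 residues not parallel}$(a)$ deduces non-parallelism of distinct spherical rank-$2$ residues from the non-existence of spherical rank-$3$ subsystems, which is precisely (i)$\Rightarrow$(iii)). Moreover, your closing suggestion that the passage from $R$ to $T$ can be ``threaded through rank-$2$ links that are automatically spherical'' is not right as stated: the intermediate residues $T_i$ must have rank $1+\operatorname{rank}(R)$, and sphericity of $\langle J\cup\{s\}\rangle$ is not automatic in a general Coxeter system (in the infinite dihedral group no rank-$2$ residue is spherical; the implication survives there only because no two distinct panels are parallel, so there is nothing to thread). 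What actually has to be proved is that parallelism of $R$ and $T$ \emph{forces}, for a suitable first letter $s$ of a reduced expression of $w$, the group $\langle J\cup\{s\}\rangle$ to be finite, and that the residue opposite to $R$ in $R_{J\cup\{s\}}(u)$ is again parallel to $T$ at strictly smaller distance. That is the induction step of the cited result of Caprace--M\"uhlherr, and until it is carried out the equivalence with (iii)---hence the lemma in the form used downstream---remains unproven.
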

\begin{proof}
	This is \cite[Proposition $2.7$]{CM06}.
\end{proof}

\begin{lemma}\label{Lemma: 2-dimensional rank 2 residues not parallel}
	Suppose $\vert \langle J \rangle \vert = \infty$ for all $J \subseteq S$ containing at least three elements.
	\begin{enumerate}[label=(\alph*)]
		\item Let $R$ and $T$ be two different spherical residues of $\Sigma(W, S)$ of rank $2$. Then $R$ and $T$ are not parallel.
		
		\item Let $\alpha, \beta \in \Phi$ with $\alpha \neq \pm \beta$. Then $\vert \partial^2 \alpha \cap \partial^2 \beta \vert \leq 1$.
	\end{enumerate}
\end{lemma}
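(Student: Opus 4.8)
The plan is to prove (a) first and then deduce (b) from it, in both cases exploiting the standing hypothesis that $\Sigma(W,S)$ has no spherical residue of rank $\geq 3$.

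For part (a) I would argue by contradiction. Suppose $R \neq T$ are parallel rank-$2$ spherical residues. Then the equivalence (i)$\Leftrightarrow$(iii) of Lemma~\ref{Lemma: CM06Prop2.7} provides sequences $R_0 = R, \ldots, R_n = T$ and $T_1, \ldots, T_n$ of residues of spherical type with $\mathrm{rank}(T_i) = 1 + \mathrm{rank}(R) = 3$ for each $i$. Since $R \neq T$ we have $n \geq 1$, so at least one residue $T_1$ occurs; its type is a subset $J \subseteq S$ with $\vert J\vert = 3$ and $\langle J\rangle$ finite, contradicting the hypothesis $\vert\langle J\rangle\vert = \infty$ for all $J$ of size $\geq 3$. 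Hence no such parallel pair exists.

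For part (b) the first observation is that $r_\alpha \neq r_\beta$: two roots determine the same wall, equivalently the same reflection, precisely when they are equal or opposite, and $\alpha \neq \pm\beta$. If $\partial^2\alpha \cap \partial^2\beta = \emptyset$ there is nothing to prove, so let $R, T \in \partial^2\alpha \cap \partial^2\beta$; the goal is to show $R = T$. By definition of $\partial^2\alpha$ and $\partial^2\beta$, both $r_\alpha$ and $r_\beta$ lie in $\Stab(R)\cap\Stab(T)$. I would then consider the residue $R' := \proj_R T \subseteq R$. Since projections commute with type-preserving automorphisms (as used in Lemma~\ref{Lemma: stabilized implies parallel}), any $g \in \Stab(R)\cap\Stab(T)$ satisfies $g.\proj_R T = \proj_{g.R}(g.T) = \proj_R T$; in particular $r_\alpha$ and $r_\beta$ stabilize $R'$.

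The key step is then a rank count for $R'$. As $W$ acts simply transitively on the chambers of $\Sigma(W,S)$ (see the Example), no reflection fixes a chamber, so $R'$ cannot be a single chamber; moreover the stabilizer of a panel of $\Sigma(W,S)$ contains a \emph{unique} reflection, so $R'$ cannot be a panel, since it is stabilized by the two distinct reflections $r_\alpha \neq r_\beta$. Hence $\mathrm{rank}(R') \geq 2$, and since $R' \subseteq R$ with $\mathrm{rank}(R) = 2$ this forces $R' = R$, that is, $\proj_R T = R$. By the symmetric argument $\proj_T R = T$, so $R$ and $T$ are parallel by definition, and part (a) yields $R = T$. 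This shows $\vert \partial^2\alpha \cap \partial^2\beta\vert \leq 1$. I expect the only genuine subtlety to be this rank count, namely the two facts that reflections of $\Sigma(W,S)$ fix no chamber and that each panel is stabilized by exactly one reflection, which is precisely where the distinctness $r_\alpha \neq r_\beta$ coming from $\alpha \neq \pm\beta$ is used; the remainder is a direct application of Lemma~\ref{Lemma: CM06Prop2.7}, the definition of parallel residues, and part (a).
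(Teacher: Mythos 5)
Your proof is correct. For part (a) you are doing exactly what the paper intends: its proof is the one-line citation of Lemma~\ref{Lemma: CM06Prop2.7}, and your unfolding of the equivalence (i)$\Leftrightarrow$(iii) --- a parallel pair $R \neq T$ forces $n \geq 1$ and hence at least one spherical residue $T_1$ of rank $3$, whose type $J$ contradicts $\vert\langle J\rangle\vert = \infty$ --- is precisely that argument. For part (b) the paper defers entirely to the external reference \cite[Lemma~$2.5$]{BiCoxGrowth} (which asserts exactly that two rank-$2$ spherical residues lying in $\partial^2\alpha\cap\partial^2\beta$ are parallel), whereas you give a self-contained derivation: $r_\alpha$ and $r_\beta$ stabilize $\proj_R T$ because projections commute with automorphisms; a non-empty residue of $\Sigma(W,S)$ stabilized by two distinct reflections can be neither a single chamber (reflections fix no chamber of the Coxeter complex) nor a panel (each panel $\{w,ws\}$ is stabilized by the unique reflection $wsw^{-1}$), so $\proj_R T = R$, symmetrically $\proj_T R = T$, and part (a) finishes. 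Every ingredient is sound, and indeed the paper uses these same facts elsewhere (the uniqueness of the reflection stabilizing a panel appears in the proof of Lemma~\ref{Lemma: combinatorial triangle yields triangle}). What your version buys is independence from the external reference at the cost of half a page; nothing of substance is lost, since your hypotheses match those of the lemma.
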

\begin{proof}
	Part $(a)$ follows from Lemma \ref{Lemma: CM06Prop2.7} and $(b)$ follows from \cite[Lemma $2.5$]{BiCoxGrowth}.
\end{proof}

\subsection*{Reflection and combinatorial triangles in $\mathbf{\Sigma(W, S)}$}

A \textit{reflection triangle} is a set $T$ of three reflections such that the order of $tt'$ is finite for all $t, t' \in T$ and such that $\bigcap_{t\in T} \partial^2 \beta_t = \emptyset$, where $\beta_t$ is one of the two roots associated with the reflection $t$. Note that $\partial^2 \beta_t = \partial^2 (-\beta_t)$. A set of three roots $T$ is called \textit{combinatorial triangle} if the following hold:

\begin{enumerate}[label=(CT\arabic*)]
	\item The set $\{ r_{\alpha} \mid \alpha \in T \}$ is a reflection triangle.
	
	\item For each $\alpha \in T$, there exists $\sigma \in \partial^2 \beta \cap \partial^2 \gamma$ with $\sigma \subseteq \alpha$ where $\{ \beta, \gamma \} = T \backslash \{ \alpha \}$.
\end{enumerate}

\begin{remark}\label{Remark: reflection triangle plus orientation yields triangle}
	Let $R$ be a reflection triangle. Then there exist three roots $\beta_1, \beta_2, \beta_3 \in \Phi$ such that $R = \{ r_{\beta_1}, r_{\beta_2}, r_{\beta_3} \}$. Let $\{i, j, k\} = \{1, 2, 3\}$. As $o(r_{\beta_i} r_{\beta_j}) < \infty$, there exists $\sigma_k \in \partial^2 \beta_i \cap \partial^2 \beta_j$. Since $R$ is a reflection triangle, we have $\sigma_k \notin \partial^2 \beta_k$. Now \cite[Lemma $2.2$]{BiCoxGrowth} yields $\sigma_k \subseteq \beta_k$ or $\sigma_k \subseteq -\beta_k$. Let $\epsilon_k \in \{+, -\}$ with $\sigma_k \subseteq \epsilon_k \beta_k$ and define $\alpha_k := \epsilon_k \beta_k$. Then $\{ \alpha_1, \alpha_2, \alpha_3 \}$ is a combinatorial triangle, which induces the reflection triangle $R$.
\end{remark}

\begin{lemma}\label{Lemma: combinatorial triangle yields triangle}
	Let $\{ \alpha_1, \alpha_2, \alpha_3 \}$ be a combinatorial triangle and let $R_k \in \partial^2 \alpha_i \cap \partial^2 \alpha_j$ for $\{i, j, k\} = \{1, 2, 3\}$. Then the following hold:
	\begin{enumerate}[label=(\alph*)]
		\item The residue $\proj_{R_i} R_j$ is a panel which is contained in $\partial \alpha_k$.
		
		\item The panels $\proj_{R_k} R_i$ and $\proj_{R_k} R_j$ are not parallel.
	\end{enumerate}
\end{lemma}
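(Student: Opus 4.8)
The plan is to exploit the symmetry of the configuration together with the fact that $\Sigma(W,S)$ is thin and $W$ acts freely on its chambers, so no nontrivial element of $W$ fixes a chamber. First I would record which reflections stabilise which residues. By hypothesis $R_k \in \partial^2\alpha_i \cap \partial^2\alpha_j$, so $r_{\alpha_i}$ and $r_{\alpha_j}$ stabilise $R_k$; cyclically, $r_{\alpha_j}, r_{\alpha_k}$ stabilise $R_i$ and $r_{\alpha_i}, r_{\alpha_k}$ stabilise $R_j$. Moreover the reflection-triangle condition gives $\partial^2\alpha_1 \cap \partial^2\alpha_2 \cap \partial^2\alpha_3 = \emptyset$, whence $R_i \notin \partial^2\alpha_i$, $R_j \notin \partial^2\alpha_j$ and $R_k \notin \partial^2\alpha_k$; that is, $r_{\alpha_i}$ does not stabilise $R_i$, and so on. Combined with Lemma~\ref{Lemma: CM06Prop2.7} (two spherical residues are parallel if and only if they are stabilised by the same reflections), this shows at once that any two of $R_i,R_j,R_k$ are non-parallel: for instance $r_{\alpha_j}$ stabilises $R_i$ but not $R_j$.

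For part (a) I would first observe that $P := \proj_{R_i} R_j$ is stabilised by $r_{\alpha_k}$: both $R_i$ and $R_j$ are $r_{\alpha_k}$-stable and projection commutes with automorphisms, so $r_{\alpha_k}.P = \proj_{r_{\alpha_k}.R_i}(r_{\alpha_k}.R_j) = P$. Since $P$ is a residue of the rank $2$ residue $R_i$, its rank is $0$, $1$ or $2$. Rank $0$ is impossible, as a single chamber stabilised by $r_{\alpha_k}$ would be fixed by $r_{\alpha_k}$, contradicting freeness of the $W$-action. Rank $2$ is impossible too: then $P = R_i$, and since $\proj_{R_i} R_j$ and $T' := \proj_{R_j} R_i$ are parallel (Lemma~\ref{Lemma: results about parallel residues}(a)) the map $\proj^{R_i}_{T'}$ is a bijection (Lemma~\ref{Lemma: results about parallel residues}(b)), so $|T'| = |R_i| \ge 4$; as every proper residue of the thin rank $2$ residue $R_j$ has at most two chambers, this forces $T' = R_j$ and hence $R_i$ and $R_j$ parallel, a contradiction. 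Thus $P$ is a panel, and being stabilised by $r_{\alpha_k}$ it lies in $\partial\alpha_k$ by the very definition of $\partial\alpha_k$.

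For part (b) I would apply part (a) to permuted indices, the hypotheses being symmetric in $\{1,2,3\}$: for $\{a,b,c\}=\{1,2,3\}$ the panel $\proj_{R_a} R_b$ lies in $\partial\alpha_c$. In particular $\proj_{R_k} R_i \in \partial\alpha_j$ and $\proj_{R_k} R_j \in \partial\alpha_i$. Suppose these two panels were parallel. Then by Lemma~\ref{Lemma: CM06Prop2.7} they would be stabilised by exactly the same reflections; since $r_{\alpha_j}$ stabilises $\proj_{R_k} R_i$, it would also stabilise $\proj_{R_k} R_j$, which already lies in $\partial\alpha_i$. Thus a single panel would be stabilised by the two distinct reflections $r_{\alpha_i} \ne r_{\alpha_j}$ (distinct because $\{r_{\alpha_1},r_{\alpha_2},r_{\alpha_3}\}$ is a set of three reflections, so $\alpha_i \ne \pm\alpha_j$). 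But each of $r_{\alpha_i}, r_{\alpha_j}$ interchanges the two chambers of that panel, having no fixed chamber, so their product fixes both chambers; by freeness of the $W$-action $r_{\alpha_i} = r_{\alpha_j}$, a contradiction. Hence $\proj_{R_k} R_i$ and $\proj_{R_k} R_j$ are not parallel.

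The main obstacle I anticipate is the rank $2$ exclusion in part (a): knowing only that $R_i$ and $R_j$ are non-parallel does not by itself prevent $\proj_{R_i} R_j$ from being all of $R_i$, so one must convert non-parallelism into a statement about the projection via the bijection and cardinality argument above (equivalently, via the equal-rank property of parallel spherical residues implicit in Lemma~\ref{Lemma: CM06Prop2.7}(iii)). The remainder is bookkeeping with stabilising reflections, the only other genuine input being the elementary but decisive fact that a panel of $\Sigma(W,S)$ cannot lie on two distinct walls.
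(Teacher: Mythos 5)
Your proof is correct and follows essentially the same route as the paper: rule out rank $0$ and rank $2$ for $\proj_{R_i}R_j$ using Lemma~\ref{Lemma: CM06Prop2.7} together with the emptiness of $\partial^2\alpha_1\cap\partial^2\alpha_2\cap\partial^2\alpha_3$, then deduce (b) from the fact that a panel of $\Sigma(W,S)$ is stabilised by a unique reflection. The only cosmetic differences are that you exclude the single-chamber case via freeness of the $W$-action (the paper instead exhibits the panel $\proj_{R_i}P_j$ inside the projection) and that you obtain $\proj_{R_i}R_j\in\partial\alpha_k$ directly from equivariance of projections rather than via Lemma~\ref{Lemma: stabilized implies parallel}.
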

\begin{proof}
	We know that $\proj_{R_i} R_j$ is a residue. As $R_i, R_j \in \partial^2 \alpha_k$, there are panels $P_i \subseteq R_i$ and $P_j \subseteq R_j$ with $P_i, P_j \in \partial \alpha_k$. In particular, $P_i$ and $P_j$ are parallel by Lemma \ref{Lemma: CM06Prop2.7} and $\proj_{R_i} R_j$ contains the panel $\proj_{R_i} P_j$ (cf.\ Lemma \ref{Lemma: results about parallel panels}$(c)$). Suppose $\proj_{R_i} R_j$ is not a panel. Then $\proj_{R_i} R_j = R_i$. As $R_i = \proj_{R_i} R_j$ and $\proj_{R_j} R_i$ are parallel by Lemma \ref{Lemma: results about parallel residues}$(a)$, it follows $\proj_{R_j} R_i = R_j$ by Lemma \ref{Lemma: CM06Prop2.7} (as they must have the same rank). But then $R_i$ and $R_j$ are parallel and Lemma \ref{Lemma: CM06Prop2.7} implies $R_i \in \partial^2 \alpha_i$. In particular, $R_i \in \partial^2 \alpha_i \cap \partial^2 \alpha_j \cap \partial^2 \alpha_k$, which is a contradiction. This shows the first part of $(a)$.
	
	Note that $\proj_{R_i} R_j$ is a panel containing the panel $\proj_{R_i} P_j$ and hence $\proj_{R_i} R_j = \proj_{R_i} P_j$. As $P_i, P_j \in \partial \alpha_k$ and $P_i \subseteq R_i$, it follows from Lemma \ref{Lemma: stabilized implies parallel}$(b)$ that $\proj_{R_i} R_j = \proj_{R_i} P_j \in \partial \alpha_k$. Similarly, we obtain $\proj_{R_i} R_k \in \partial \alpha_j$. As $\{ \alpha_1, \alpha_2, \alpha_3 \}$ is a combinatorial triangle, we have in particular $\alpha_k \neq \pm \alpha_j$. Since for each panel there exists a unique reflection which stabilizes it, Lemma \ref{Lemma: CM06Prop2.7} implies that the panels are not parallel. This finishes the claim.
\end{proof}

\begin{lemma}\label{Lemma: reflection triangle is a chamber}
	Suppose $(W, S)$ is $2$-complete. If $T$ is a combinatorial triangle, then $(-\alpha, \beta) = \emptyset$ for all $\alpha \neq \beta \in T$. In particular, $\vert \alpha \cap \beta \cap R \vert = 1$ for each $R \in \partial^2 \alpha \cap \partial^2 \beta$.
\end{lemma}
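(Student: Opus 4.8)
The plan is to pick the third root $\alpha_3$ with $T = \{\alpha,\beta,\alpha_3\}$, pass to a thin rank-two residue in which $\partial\alpha$ and $\partial\beta$ cross, and show that there the two walls bound a single chamber. First I would reduce both assertions to one such residue. By (CT1) the reflections $r_\alpha, r_\beta$ have finite product, so $\partial^2\alpha\cap\partial^2\beta\neq\emptyset$; let $R$ be a member. Then $R$ is a spherical rank-two residue of the thin building $\Sigma(W,S)$, hence a $2m$-gon with $m = o(r_\alpha r_\beta)$, and both walls meet $R$. Any root $\rho$ with $\alpha\cap\beta\subseteq\rho\subseteq\alpha\cup\beta$, and any $\rho$ with $(-\alpha)\cap\beta\subseteq\rho\subseteq(-\alpha)\cup\beta$, satisfies $r_\rho\in\langle r_\alpha, r_\beta\rangle\leq\Stab(R)$, so all roots of $\left[\alpha,\beta\right]$ and of $\left[-\alpha,\beta\right]$ appear as walls of the $2m$-gon $R$. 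Counting in the $2m$-gon gives $\vert\alpha\cap\beta\cap R\vert = \vert(-\alpha,\beta)\vert + 1$. Thus $(-\alpha,\beta) = \emptyset$ is equivalent to $\vert\alpha\cap\beta\cap R\vert = 1$, and since $\vert(-\alpha,\beta)\vert$ is a global quantity, the equality $\vert\alpha\cap\beta\cap R\vert=1$ then holds for \emph{every} $R\in\partial^2\alpha\cap\partial^2\beta$. So it suffices to exhibit one such $R$ with $\vert\alpha\cap\beta\cap R\vert=1$.

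Second, I would fix the orientation and locate the two walls inside $R$. Applying (CT2) to $\alpha_3$ yields a residue $R\in\partial^2\alpha\cap\partial^2\beta$ with $R\subseteq\alpha_3$; I take this $R$ and write $R_1\in\partial^2\beta\cap\partial^2\alpha_3$, $R_2\in\partial^2\alpha\cap\partial^2\alpha_3$ for the other two residues of the combinatorial triangle, so that $R_1\subseteq\alpha$ and $R_2\subseteq\beta$ by (CT2). By Lemma \ref{Lemma: combinatorial triangle yields triangle}(a), $\proj_R R_2$ is a panel contained in $\partial\alpha$ and $\proj_R R_1$ a panel contained in $\partial\beta$. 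Because roots are convex and $R_2\subseteq\beta$, $R_1\subseteq\alpha$, these projections lie in $\beta$ and in $\alpha$ respectively; hence $\proj_R R_2$ is the $\partial\alpha$-panel of $R$ both of whose chambers lie in $\beta$, and $\proj_R R_1$ the $\partial\beta$-panel of $R$ both of whose chambers lie in $\alpha$. These are exactly the two panels bounding the sector $\alpha\cap\beta\cap R$ at its two ends, so $\vert\alpha\cap\beta\cap R\vert = 1$ is equivalent to the statement that $\proj_R R_1$ and $\proj_R R_2$ are incident to a common chamber (which then lies in $\alpha\cap\beta$).

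Finally, the remaining point --- and the step I expect to be the main obstacle --- is to show that these two gate panels really share a chamber, i.e.\ that the configuration has minimal opening angle at $R$. Here I would argue by contradiction. If $\vert\alpha\cap\beta\cap R\vert\geq 2$, there is a root in $(-\alpha,\beta)$, equivalently a wall crossing $R$ strictly between $\partial\alpha$ and $\partial\beta$, which forces the ``depth at least $2$'' condition (ii) of Lemma \ref{Lemma: no combinatorial triangle} at the residue $R$. I would then run the monotonicity of Lemma \ref{Lemma: no combinatorial triangle}, fed by the growth estimate of Lemma \ref{Lemma: BiCoxGrowth2.8}, along a compatible path that follows the three walls $\partial\alpha_1,\partial\alpha_2,\partial\alpha_3$ around the triangle (assembled using Lemma \ref{Lemma: concatenation of compatible paths}). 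The strict growth of the distances $\ell(c,\proj_{P_i}c)$ then prevents this boundary path from closing up, which is incompatible with the three walls meeting pairwise: concretely it would force $\partial^2\alpha_1\cap\partial^2\alpha_2\cap\partial^2\alpha_3\neq\emptyset$ and so contradict (CT1). The delicate part is choosing the base chamber and the compatible path so that the hypotheses of Lemma \ref{Lemma: no combinatorial triangle} hold precisely when the sector is non-minimal; this is exactly where $2$-completeness of $(W,S)$ enters in an essential way.
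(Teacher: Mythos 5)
Your proposal does not establish the main assertion of the lemma. The heart of the statement is the claim $(-\alpha,\beta)=\emptyset$, and your third paragraph only outlines a strategy for it (``I would argue by contradiction\dots'', ``I would then run the monotonicity of Lemma \ref{Lemma: no combinatorial triangle}\dots''), ending with the admission that ``the delicate part is choosing the base chamber and the compatible path so that the hypotheses of Lemma \ref{Lemma: no combinatorial triangle} hold''. That delicate part is precisely the content of the assertion, so nothing has been proved. For comparison, the paper does not prove this statement internally at all: it quotes it from an external reference ([Bi22, Proposition~2.3]). A from-scratch argument via compatible paths around the triangle may well be possible, but as written you have a plan, not a proof; note also that the paper's later Theorem \ref{Theorem: combinatorial triangle is chamber}, whose proof is the natural home of the ``walk around the triangle and watch the length grow'' argument you describe, \emph{uses} Lemma \ref{Lemma: reflection triangle is a chamber} as an input, so one would have to check carefully that your intended argument does not secretly rely on the very statement being proved.

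There are also unjustified steps in your reduction of the ``in particular'' part. You assert that $R$ is a $2m$-gon with $m=o(r_\alpha r_\beta)$; in fact $R$ is a residue of some type $\{s,t\}$ and has $2m_{st}$ chambers, with $o(r_\alpha r_\beta)$ only dividing $m_{st}$. More seriously, the counting identity $\vert\alpha\cap\beta\cap R\vert=\vert(-\alpha,\beta)\vert+1$ needs the claim that \emph{every} root of $(-\alpha,\beta)$ (and of $(\alpha,\beta)$) has its wall crossing $R$; you justify this by asserting $r_\rho\in\langle r_\alpha,r_\beta\rangle$ for all $\rho$ in these intervals, which is not obvious and is given no argument (and is the kind of statement the paper instead imports from [BiConstruction, Lemma~2.9]). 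The paper's own derivation of the second assertion is much shorter and does not go through counting: two distinct chambers of $\alpha\cap\beta\cap R$ yield, by convexity, a panel $P\subseteq\alpha\cap\beta$; the wall $\gamma$ of $P$ satisfies $\epsilon\gamma\in(\alpha,\beta)$, forcing $P\subseteq\epsilon\gamma$, which is absurd since $P\in\partial\gamma$. You may want to adopt that argument, but the first assertion still needs either the external citation or a completed version of your sketched contradiction.
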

\begin{proof}
	The first part is \cite[Proposition $2.3$]{Bi22}. Now let $R \in \partial^2 \alpha \cap \partial^2 \beta$. As roots are convex and $\alpha \neq \pm \beta$, we deduce $\alpha \cap \beta \cap R \neq \emptyset$. Assume $c, d \in \alpha \cap \beta \cap R$ with $c \neq d$. As roots and residues are convex, we can assume that $c$ and $d$ are contained in a panel $P \subseteq R$. Let $\gamma$ be a root with $P \in \partial \gamma$. Note that $R \in \partial^2 \gamma$. Using \cite[Lemma $2.9(b)$]{BiConstruction} we obtain $\epsilon \gamma \in (\alpha, \beta)$ for some $\epsilon \in \{+, -\}$. But then $P \subseteq \alpha \cap \beta \subseteq \epsilon \gamma$, which is a contradiction. This finishes the claim.
\end{proof}

\begin{lemma}\label{Lemma: A_2 tilde}
	Suppose $(W, S)$ is $2$-complete. Let $r, s, t, u \in S$ be such that $r \neq s\neq t \neq u$ and such that the panels $\P_r(1_W)$ and $\P_u(st)$ are parallel. Then $r=t$, $u=s$ and $m_{st} = 3$.
\end{lemma}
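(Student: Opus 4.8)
The plan is to work directly in the thin building $\Sigma(W,S)$, where $\P_r(1_W) = \{1_W, r\}$ and $\P_u(st) = \{st, stu\}$, and to translate parallelism into a single length comparison. Since $\Sigma(W,S)$ is thin, each panel has exactly two chambers, so by Lemma~\ref{Lemma: results about parallel panels}$(a)$ the panels $P := \P_r(1_W)$ and $Q := \P_u(st)$ are parallel if and only if $\lvert \proj_Q P \rvert \geq 2$, i.e. if and only if $\proj_Q 1_W \neq \proj_Q r$. Recalling that in $\Sigma(W,S)$ the projection of a chamber $c$ onto a panel $\{w, wv\}$ is whichever of $w, wv$ is closer to $c$ (compare $\ell(c^{-1}w)$ with $\ell(c^{-1}wv)$), I would first evaluate these two projections explicitly.

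First I would note that $\ell(st) = 2$ and that, since $m_{st} \geq 3$ forces $st$ to have the single reduced word $st$ (ending in $t \neq u$), appending $u$ gives $\ell(stu) = 3$; hence $\proj_Q 1_W = st$. For the second projection, $\delta(r, st) = rst$ and $\delta(r, stu) = rstu$, so $\proj_Q r = stu$ precisely when $\ell(rstu) < \ell(rst)$. The lemma thus reduces to the claim that $P$ and $Q$ are parallel if and only if $\ell(rstu) < \ell(rst)$, and that this inequality is equivalent to $r = t$, $u = s$ and $m_{st} = 3$. A short argument shows $\ell(rst) = 3$ in all cases: since $m_{st}\ge 3$ the element $st$ is rigid with unique left descent $s \neq r$, so prepending $r$ raises the length. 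Consequently $\ell(rstu) \in \{2, 4\}$, and parallelism is equivalent to $\ell(rstu) = 2$, i.e. to $rst$ admitting a reduced expression ending in $u$.

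The heart of the proof is then a reduced-word analysis exploiting that any two reduced words of an element are linked by braid moves and that here every $m_{xy} \geq 3$. If $r \neq t$, then $r, s, t$ are pairwise distinct, so the length-$3$ word $rst$ contains no alternating subword of length $\geq 3$ (the only candidate, the whole word, would have to be of the form $xyx$, forcing $r = t$): no braid move applies, $rst$ is its own unique reduced expression, and it ends only in $t \neq u$, giving $\ell(rstu) = 4$ and non-parallelism. Hence parallelism forces $r = t$. With $r = t$ the word is $tst$, which is alternating of length $3$; a braid move is available exactly when $m_{st} = 3$, in which case $tst = sts$, and otherwise $tst$ is rigid. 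Therefore $tst$ admits a reduced expression ending in some $u \neq t$ if and only if $m_{st} = 3$ and $u = s$, which yields the three asserted equalities. The main obstacle is precisely this final combinatorial step: one must argue cleanly that, in a $2$-complete system, the element $rst$ has no alternative reduced expression unless $r = t$ and $m_{st} = 3$, for which the braid-move description of reduced words is the essential tool.
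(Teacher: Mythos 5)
Your argument is correct, and its skeleton coincides with the paper's: both proofs first establish $\ell(rst)=3$ and then translate parallelism of $\P_r(1_W)$ and $\P_u(st)$ into the single length condition $\ell(rstu)=2$ (you do this by computing the two gates $\proj_Q 1_W = st$ and $\proj_Q r$ explicitly and invoking Lemma~\ref{Lemma: results about parallel panels}$(a)$; the paper instead quotes the bijection criterion of Lemma~\ref{Lemma: results about parallel residues}$(b)$ --- the two are interchangeable here). The genuine divergence is in the final step. The paper disposes of $\ell(rstu)=2$ in one line by citing Lemma~\ref{Lemma: BiCoxGrowth2.8} (i.e.\ \cite[Corollary~2.8]{BiCoxGrowth}), which packages exactly the needed dihedral/parabolic-coset growth statement for $2$-complete systems. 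You instead run a self-contained Matsumoto--Tits analysis: since $2$-completeness forces every $m_{xy}\geq 3$, a braid move on a reduced word of length $3$ must use the whole word, so $rst$ is rigid unless $r=t$, and $tst$ acquires the extra right descent $s$ exactly when $m_{st}=3$. Your route is more elementary and makes the role of the hypothesis $m_{xy}>2$ completely transparent, at the cost of invoking the word-problem machinery explicitly; the paper's route is shorter but leans on an external reference whose applicability (in particular the case distinction $r\neq t$ versus $r=t$) is left to the reader. Both are complete proofs of the stated implication; note that neither needs the converse direction of the parallelism criterion, and you correctly only use the forward one.
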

\begin{proof}
	By \cite[Lemma $2.15$]{AB08} and the fact that $m_{st} >2$ we have $\ell(rst) = 3$. As $\P_r(1_W)$ and $\P_u(st)$ are parallel, we must have $\ell(rstu) = 2$ by Lemma \ref{Lemma: results about parallel residues}$(b)$. But then Lemma \ref{Lemma: BiCoxGrowth2.8} implies $r=t$, $u=s$ and $m_{st} = 3$.
\end{proof}

\begin{remark}
	Suppose $(W, S)$ is $2$-complete and let $T = \{ \alpha_i, \alpha_j, \alpha_k \}$ be a combinatorial triangle. By definition and Lemma \ref{Lemma: 2-dimensional rank 2 residues not parallel}$(b)$ there exists a unique element contained in $\partial^2 \alpha_i \cap \partial^2 \alpha_j$.
\end{remark}

\begin{theorem}\label{Theorem: combinatorial triangle is chamber}
	Suppose $(W, S)$ is $2$-complete and $\tilde{A}_2$-free. Let $T = \{ \alpha_i, \alpha_j, \alpha_k \}$ be a combinatorial triangle. We denote the unique element contained in $\partial^2 \alpha_i \cap \partial^2 \alpha_j$ by $R_k$. Then $R_i \cap R_j \cap R_k$ is non-empty and contains a unique chamber.
\end{theorem}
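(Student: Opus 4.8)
The plan is to show that the three ``corner chambers'' furnished by Lemma~\ref{Lemma: reflection triangle is a chamber} all coincide, and that this common chamber is the unique one lying in all three residues. Write $c_k$ for the unique chamber of $\alpha_i \cap \alpha_j \cap R_k$, and cyclically $c_i, c_j$. First I would record the orientation built into a combinatorial triangle: applying condition (CT2) to $\alpha_k$, and using that $R_k$ is the \emph{unique} element of $\partial^2\alpha_i \cap \partial^2\alpha_j$, forces the witnessing residue to be $R_k$ itself, so $R_k \subseteq \alpha_k$; cyclically $R_i \subseteq \alpha_i$ and $R_j \subseteq \alpha_j$. In particular each $c_m$ lies in $\alpha_i \cap \alpha_j \cap \alpha_k$.

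Next I would pin down the local picture inside $R_k$. By Lemma~\ref{Lemma: combinatorial triangle yields triangle} the panels $P := \proj_{R_k} R_i \in \partial\alpha_j$ and $Q := \proj_{R_k} R_j \in \partial\alpha_i$ are non-parallel. Working in the thin rank-$2$ residue $R_k$, the reflections $r_{\alpha_i}$ and $r_{\alpha_j}$ act as the two wall-reflections fixing $Q$ and $P$ respectively, so $\alpha_i \cap R_k$ and $\alpha_j \cap R_k$ are the two half-residues bounded by these walls. Since their intersection $\alpha_i \cap \alpha_j \cap R_k = \{c_k\}$ is a single chamber, the two bounding panels must be the two panels of $R_k$ at $c_k$; that is, $P$ and $Q$ are the two distinct, hence differently typed, panels of $R_k$ containing $c_k$, so $P \cap Q = \{c_k\}$. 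This already yields \textbf{uniqueness}: if $c \in R_i \cap R_j \cap R_k$, then $c \in R_k$ gives $\proj_{R_k} c = c$, while $c \in R_i$ gives $c \in \proj_{R_k} R_i = P$, and symmetrically $c \in Q$, whence $c \in P \cap Q = \{c_k\}$.

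It remains to prove \textbf{existence}, i.e.\ $c_k \in R_i$ (then symmetrically $c_k \in R_j$, and $c_k \in R_k$ is clear). By Lemma~\ref{Lemma: triangle projection}, $d := \proj_{R_i} c_k$ satisfies $\proj_{R_k} d = c_k$, so $c_k \in R_i$ is equivalent to $d = c_k$. The panels $P = \proj_{R_k} R_i$ and $P' := \proj_{R_i} R_k$ are parallel by Lemma~\ref{Lemma: results about parallel residues}(a), with $c_k \in P$ and $d \in P'$, both lying on the wall $\partial\alpha_j$, and related by a compatible path (Lemma~\ref{Lemma: results about parallel panels}(a)). Assuming $d \neq c_k$, this compatible path is nontrivial, and concatenating it with the compatible paths running along the walls $\partial\alpha_i$ and $\partial\alpha_k$ (using Lemma~\ref{Lemma: concatenation of compatible paths}) would close the configuration up around all three corners. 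I would then invoke the length-growth phenomenon of Lemma~\ref{Lemma: no combinatorial triangle}, together with Lemma~\ref{Lemma: parallel panels - word become longer}, to show that the distance $\ell(c_k, \proj_{P_m} c_k)$ strictly increases along such a path and can never return, contradicting closure. The only way the word could fail to grow is if some rank-$2$ residue crossed along the wall carries the label $3$; but then $P$ and $P'$ would be parallel panels in the configuration of Lemma~\ref{Lemma: A_2 tilde}, forcing a subsystem of type $\tilde A_2$ and contradicting the hypothesis. Hence $d = c_k$ and $c_k \in R_i \cap R_j \cap R_k$.

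The uniqueness half and the orientation bookkeeping are routine; the genuine difficulty is the existence step. The subtlety is that $P$ and $P'$ lie on the same wall but need \emph{not} share a chamber a priori, so the hard part will be converting the failure $d \neq c_k$ into an honest closed compatible path to which Lemma~\ref{Lemma: no combinatorial triangle} applies, and simultaneously showing via $2$-completeness that every intermediate rank-$2$ residue encountered must either propagate the length growth or carry the $\tilde A_2$-defining label $3$ that Lemma~\ref{Lemma: A_2 tilde} then excludes. Controlling this bookkeeping along the compatible path is where the real work lies.
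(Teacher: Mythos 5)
Your setup agrees with the paper's: the corner chambers $c_i,c_j,c_k$ from Lemma~\ref{Lemma: reflection triangle is a chamber}, the orientation $R_l\subseteq\alpha_l$ extracted from (CT2) and the uniqueness of the element of $\partial^2\alpha_i\cap\partial^2\alpha_j$, and the uniqueness half via $\proj_{R_k}R_i\cap\proj_{R_k}R_j=\{c_k\}$ (which does need the small convexity argument $\proj_{R_k}c_i\in\alpha_i\cap\alpha_j\cap R_k=\{c_k\}$ to see that $\proj_{R_k}R_i$ is the panel of $\partial\alpha_j$ \emph{through} $c_k$ rather than the other panel of $R_k$ in that wall) are all correct and are exactly how the paper proceeds.

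The gap is in the existence step, and it is not merely unexecuted bookkeeping: the mechanism you propose does not close. Lemma~\ref{Lemma: no combinatorial triangle} propagates the two hypotheses (i) and (ii) along a \emph{single} compatible path with respect to a \emph{fixed} external chamber; there is no monotone quantity attached to a closed loop around the three corners, so ``the distance strictly increases and can never return, contradicting closure'' is not an argument one can run. What actually produces the contradiction in the paper is a comparison of the three side lengths: one orders $\ell(c_i,c_j)\geq\ell(c_j,c_k)\geq\ell(c_i,c_k)$ without loss of generality, assumes $c_j\neq c_k$, and runs Lemma~\ref{Lemma: no combinatorial triangle} along the one compatible path from $\proj_{R_j}R_k$ to $\proj_{R_k}R_j$ with apex $c_i$ to conclude $\ell(c_i,c_j)<\ell(c_i,c_k)$, contradicting the chosen ordering. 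The entire difficulty then sits in verifying hypotheses (i) and (ii) at the start of that path, which forces the case analysis on whether the distinguished letter $r$ lies in the type of $R(P_0,P_1)$ and on the length of the path, and requires several successive applications of Lemma~\ref{Lemma: parallel panels - word become longer} to extract \emph{all three} relations $m_{rs}=m_{st}=m_{rt}=3$ before $\tilde A_2$-freeness can be invoked (your ``some rank-$2$ residue carries the label $3$'' is too weak: a single bond of order $3$ contradicts nothing, and Lemma~\ref{Lemma: A_2 tilde} is only the final step in the shortest case). So while you have correctly located where the work is, the plan as stated would have to be reorganized around the length ordering of $\delta(c_i,c_j)$, $\delta(c_j,c_k)$, $\delta(c_i,c_k)$ before it could be completed.
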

\begin{proof}
	We first show that $R_i \cap R_j \cap R_k \neq \emptyset$. By Lemma \ref{Lemma: reflection triangle is a chamber}, $\alpha_i \cap \alpha_j \cap R_k$ contains a unique chamber which we denote by $c_k$. As $c_i \in R_i \subseteq \alpha_i$ and roots are convex, we deduce $\proj_{R_j} c_i \in \alpha_i$. Moreover, $c_i \in \alpha_k$ and hence $\proj_{R_j} c_i \in \alpha_k$. We deduce
	\[ \proj_{R_j} c_i \in \alpha_i \cap \alpha_k \cap R_j = \{ c_j \}. \]
	Define $P := \proj_{R_j} R_k$. As before, we have $c_j = \proj_{R_j} c_k \in P$ and Lemma \ref{Lemma: Equal projection chamber on residues} yields $c_j = \proj_{R_j} c_k = \proj_P c_k$. For $Q := \proj_{R_k} R_j$ we infer $c_k = \proj_Q c_j$ similarly. Without loss of generality we can assume $\ell(c_i, c_j) \geq \ell(c_j, c_k) \geq \ell(c_i, c_k)$. We define
	\begin{align*}
		&w_{ij} := \delta(c_i, c_j), &&w_{jk} := \delta(c_j, c_k), &&w_{ik} := \delta(c_i, c_k).
	\end{align*}
	Assume $\ell(w_{jk}) >0$ and hence $\ell(w_{ij}) \geq \ell(w_{jk}) > 0$. As $\proj_{R_i} R_j$ and $\proj_{R_j} R_i$ are panels by Lemma \ref{Lemma: combinatorial triangle yields triangle}$(a)$ which are parallel by Lemma \ref{Lemma: results about parallel residues}$(a)$, there exists a compatible path $(Q_0 := \proj_{R_i} R_j, \ldots, Q_q := \proj_{R_j} R_i)$ by Lemma \ref{Lemma: results about parallel panels}$(a)$. As $c_i \in Q_0$ and $c_j \in Q_q$, we have $q>0$. Using induction there exists a minimal gallery $(e_0 := c_i, \ldots, e_l := c_j)$ with $e_z \in R(Q_{j_z}, Q_{j_z +1})$ for all $0 \leq z \leq l$. Note that $c_j \in \proj_{R_j} R_i \cap \proj_{R_j} R_k$ and hence $\proj_{R_j} R_i \cap \proj_{R_j} R_k \neq \emptyset$.	
	
	Let $\proj_{R_j} R_i$ be an $s$-panel and let $\proj_{R_j} R_k$ be a $t$-panel. By Lemma \ref{Lemma: combinatorial triangle yields triangle}$(b)$ we have, in particular, $\proj_{R_j} R_i \neq \proj_{R_j} R_k$. As $\proj_{R_j} R_i \cap \proj_{R_j} R_k \neq \emptyset$, we have $s\neq t$. Let $\{ r, s \}$ be the type of $R(Q_{q-1}, Q_q)$. Then $\ell(w_{ij} sr) = \ell(w_{ij})$ (as $Q_{q-1}$ and $Q_q$ are opposite in $R(Q_{q-1}, Q_q)$) and by Lemma \ref{Lemma: BiCoxGrowth2.8}, $r$ is the unique element in $S$ with $\ell( w_{ij} r ) < \ell(w_{ij})$. As before, there exists a compatible path $(P_0 := \proj_{R_j} R_k, \ldots, P_p := \proj_{R_k} R_j)$ and we deduce $p>0$. Note that $\proj_{R_k} c_i = c_k \in \proj_{R_k} R_j$ and, hence, Lemma \ref{Lemma: Equal projection chamber on residues} yields $\proj_{P_p} c_i = \proj_{R_k} c_i = c_k$.
	
	If $r$ is not contained in the type of $R(P_0, P_1)$, then Lemma \ref{Lemma: BiCoxGrowth2.8} yields $\proj_{R(P_0, P_1)} c_i = \proj_{P_0} c_i$. As $\proj_{R_j} c_i = c_j \in \proj_{R_j} R_k$, Lemma \ref{Lemma: Equal projection chamber on residues} implies $c_j = \proj_{R_j} c_i = \proj_{P_0} c_i$. Now Lemma \ref{Lemma: no combinatorial triangle} yields
	\[ \ell(w_{ij}) = \ell(c_i, \proj_{P_0} c_i) < \ell(c_i, \proj_{P_p} c_i) = \ell(w_{ik}). \]	
	This is a contradiction. Thus we can assume that $R(P_0, P_1)$ is of type $\{r, t\}$. By Lemma \ref{Lemma: parallel panels - word become longer} we have $\ell( w_{ij} rt ) = \ell(w_{ij})$. If $m_{rt} \neq 3$, we deduce again a contradiction from Lemma \ref{Lemma: no combinatorial triangle}. Thus we can assume $m_{rt} = 3$. We distinguish the following cases:
	\begin{enumerate}
		\item[$p>1$:] Let $\{ r, x \}$ be the type of $R(P_1, P_2)$. Note that $(P_1, \ldots, P_p)$ is again a compatible path. If $\proj_{R(P_1, P_2)} c_i = \proj_{P_1} c_i$, then Lemma \ref{Lemma: no combinatorial triangle} yields again
		\[ \ell(w_{ij}) = \ell(c_i, \proj_{P_0} c_i) = \ell(c_i, \proj_{P_1} c_i) < \ell(c_i, \proj_{P_p} c_i) = \ell(w_{ik}) \]
		which is a contradiction. Thus we can assume $\proj_{R(P_1, P_2)} c_i \neq \proj_{P_1} c_i$ and hence $\ell( w_{ij} rtx ) = \ell(w_{ij}) -1$. This implies $\ell(w_{ij}rx) = \ell(w_{ij})-2$ and Lemma \ref{Lemma: parallel panels - word become longer} yields $x = s$. Moreover, we have $\ell(w_{ij}rst) = \ell(w_{ij})-3$. But then Lemma \ref{Lemma: parallel panels - word become longer} yields $m_{rs} = 3$. Since $c_j rs = \proj_{R(Q_{q-1}, Q_q)} c_i$, we have $\ell(w_{ij} rsrt) = \ell(w_{ij}) -2$ as above. Again, Lemma \ref{Lemma: parallel panels - word become longer} yields $\ell(w_{ij} rsts) = \ell(w_{ij}) -2$ and hence $m_{st} = 3$. But then $(\langle r, s, t \rangle, \{r, s, t\})$ is a subsystem of type $\tilde{A}_2$, which is a contradiction.
		
		\item[$p=1$:] Then we have $w_{jk} = rt$ and $c_k = c_j rt$. Moreover, we have $\ell(w_{jk}) = 2$ and $\ell(w_{ij}) = \ell(w_{ik})$, which implies $\ell(w_{ij}) = \ell(w_{jk}) = \ell(w_{ik}) = 2$ and, in particular, $w_{ij} = sr$, and $w_{ik} = st$. This implies $m_{rs} = 3$. Now it follows from Lemma \ref{Lemma: A_2 tilde} that $(\langle r, s, t \rangle, \{ r, s, t \})$ is a subsystem of type $\tilde{A}_2$, which is a contradiction.
	\end{enumerate}
	
	Thus we can assume $\ell(w_{jk}) = 0$. Then $\ell(w_{ik}) \leq \ell(w_{jk}) = 0$ and hence $c_i = c_k = c_j$. In particular, $R := R_i \cap R_j \cap R_k \neq \emptyset$ and hence $R$ is a residue. As $\proj_{R_i} R_j$ is a panel by Lemma \ref{Lemma: combinatorial triangle yields triangle}$(a)$, $R$ is either a panel or a single chamber. Suppose $R$ is a panel. Then $\proj_{R_i} R_j = R = \proj_{R_i} R_k$ and, in particular, $\proj_{R_i} R_j$ and $\proj_{R_i} R_k$ are parallel. But this is a contradiction to Lemma \ref{Lemma: combinatorial triangle yields triangle}$(b)$. Hence $R$ is a single chamber and we are done.
\end{proof}

\begin{remark}
	Note that the previous theorem becomes false, if $(W, S)$ is not $\tilde{A}_2$-free.
\end{remark}

\section{Triangles in buildings}\label{Section: Triangles in buildings}

In this section we generalize the notion of combinatorial triangles in $\Sigma(W, S)$ (i.e.\ in apartments) to triangles in building. We remark that in the literature a combinatorial triangle is sometimes also called triangle. The main goal of this section is Theorem \ref{Theorem: 2-complete and A_2 tilde free triangle contains unique chamber}, where we prove a generalization of Theorem \ref{Theorem: combinatorial triangle is chamber}.

\begin{convention}
	In this section we let $\Delta = (\C, \delta)$ be a building of type $(W, S)$.
\end{convention}

\begin{definition}
	A set $T$ of three residues of spherical type and of rank $2$ is called \emph{triangle}, if the following hold:
	\begin{enumerate}[label=(T\arabic*)]
		\item For $R, Q \in T$ with $R\neq Q$, the residue $\proj_R Q$ is a panel.
		
		\item The panels $\proj_R P$ and $\proj_R Q$ are not parallel, where $T = \{P, Q, R\}$.
	\end{enumerate}
	Let $\Sigma$ be an apartment of $\Delta$. A triangle $T$ is called \emph{$\Sigma$-triangle} if $\Sigma \cap R \neq \emptyset$ for all $R \in T$.
\end{definition}

We first show that triangles exist. The next lemma shows that any reflection triangle in an apartment canonically provides a triangle in a building.

\begin{lemma}\label{Lemma: construction of triangles}
	Let $\Sigma$ be an apartment and let $\beta_1, \beta_2, \beta_3 \in \Phi^{\Sigma}$ be three roots such that $\{ r_{\beta_1}, r_{\beta_2}, r_{\beta_3} \}$ is a reflection triangle. Let $T := \{ R_1, R_2, R_3 \}$ be a set of three spherical residues of rank $2$ with the following properties:
	\begin{enumerate}[label=(\roman*)]
		\item $\Sigma \cap R \neq \emptyset$ for all $R \in T$;
		
		\item $\Sigma \cap R_k \in \partial^2 \alpha_i \cap \partial^2 \alpha_j$ for all possibilities $\{ i, j, k \} = \{1, 2, 3\}$.
	\end{enumerate}
	Then $T$ is a $\Sigma$-triangle.
\end{lemma}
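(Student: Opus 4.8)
The plan is to reduce everything to the apartment $\Sigma$, where the hypotheses describe precisely a combinatorial triangle, and then to transport the two defining properties (T1) and (T2) of a triangle back into the building $\Delta$ using the compatibility of projections and of parallelism with the operation of intersecting with $\Sigma$.

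First I would invoke Remark \ref{Remark: reflection triangle plus orientation yields triangle} to upgrade the reflection triangle $\{ r_{\beta_1}, r_{\beta_2}, r_{\beta_3} \}$ to a combinatorial triangle $\{ \alpha_1, \alpha_2, \alpha_3 \}$ of $\Sigma$, where $\alpha_m = \epsilon_m \beta_m$ for suitable signs $\epsilon_m \in \{+, -\}$. Since $r_{\alpha_m} = r_{\beta_m}$, we have $\partial^2 \alpha_m = \partial^2 \beta_m$, so hypothesis (ii) says exactly that $\Sigma \cap R_k$ lies in $\partial^2 \alpha_i \cap \partial^2 \alpha_j$ for each $\{ i, j, k \} = \{1, 2, 3\}$. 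Thus the residues $\Sigma \cap R_1, \Sigma \cap R_2, \Sigma \cap R_3$ play the role of the $R_k$ in Lemma \ref{Lemma: combinatorial triangle yields triangle}, which then yields inside $\Sigma$ that each $\proj_{\Sigma \cap R_i}(\Sigma \cap R_j)$ is a panel and that $\proj_{\Sigma \cap R_k}(\Sigma \cap R_i)$ and $\proj_{\Sigma \cap R_k}(\Sigma \cap R_j)$ are not parallel in $\Sigma$.

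For property (T1), I would fix $i \neq j$ and use hypothesis (i) together with Lemma \ref{Lemma: residues and apartments}: part (a) gives $\Sigma \cap \proj_{R_i} R_j = \proj_{\Sigma \cap R_i}(\Sigma \cap R_j)$, which is a panel by the previous paragraph, while part (b) guarantees that $\proj_{R_i} R_j$ has the same type as this intersection. Hence $\proj_{R_i} R_j$ has rank $1$, i.e.\ it is a panel, which is exactly (T1). For property (T2), I would again apply Lemma \ref{Lemma: residues and apartments}(a) to obtain $\Sigma \cap \proj_{R_k} R_i = \proj_{\Sigma \cap R_k}(\Sigma \cap R_i)$ and $\Sigma \cap \proj_{R_k} R_j = \proj_{\Sigma \cap R_k}(\Sigma \cap R_j)$; in particular $\Sigma$ contains chambers of both panels $\proj_{R_k} R_i$ and $\proj_{R_k} R_j$. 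Since these $\Sigma$-intersections are not parallel in $\Sigma$, Lemma \ref{Lemma: results about parallel residues}(c) forces $\proj_{R_k} R_i$ and $\proj_{R_k} R_j$ to be non-parallel in $\Delta$, which is (T2). Together with hypothesis (i) this shows that $T$ is a $\Sigma$-triangle.

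The computational heart of the argument is already packaged in Lemma \ref{Lemma: combinatorial triangle yields triangle}, so no serious obstacle arises there. The only genuine care is needed in the transfer step: one must check that all the relevant intersections $\Sigma \cap R_m$ and the projected panels $\Sigma \cap \proj_{R_k} R_i$ are non-empty residues of $\Sigma$ (which follows from hypothesis (i) via Lemma \ref{Lemma: residues and apartments}), so that Lemma \ref{Lemma: residues and apartments} and Lemma \ref{Lemma: results about parallel residues}(c) are applicable, and one must keep the index bookkeeping $\{ i, j, k \} = \{1, 2, 3\}$ consistent between the apartment statement and the building statement.
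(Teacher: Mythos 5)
Your proof is correct and follows essentially the same route as the paper's: pass to the combinatorial triangle via Remark \ref{Remark: reflection triangle plus orientation yields triangle}, apply Lemma \ref{Lemma: combinatorial triangle yields triangle} inside $\Sigma$, and transfer (T1) and (T2) to $\Delta$ via Lemma \ref{Lemma: residues and apartments} and Lemma \ref{Lemma: results about parallel residues}$(c)$. Your explicit remark that $\partial^2 \alpha_m = \partial^2 \beta_m$ (so that hypothesis (ii) matches the roots produced by the Remark) is a small clarification the paper leaves implicit, but the argument is the same.
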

\begin{proof}
	We abbreviate $\Sigma_i := \Sigma \cap R_i$ for $i \in \{1, 2, 3\}$. By Remark \ref{Remark: reflection triangle plus orientation yields triangle}, there exists $\gamma_i \in \{ \beta_i, -\beta_i \}$ for all $i \in \{1, 2, 3\}$ such that $\{ \gamma_1, \gamma_2, \gamma_3 \}$ is a combinatorial triangle. Using Lemma \ref{Lemma: combinatorial triangle yields triangle} and Lemma \ref{Lemma: residues and apartments}$(a)$ we obtain the following:
	\begin{enumerate}[label=(\alph*)]
		\item $\Sigma \cap \proj_{R_i} R_j = \proj_{\Sigma_i} \Sigma_j$ is a panel in $\Sigma$ for $i\neq j \in \{1, 2, 3\}$.
		
		\item The panels $\Sigma \cap \proj_{R_k} R_i = \proj_{\Sigma_k} \Sigma_i$ and $\Sigma \cap \proj_{R_k} R_j = \proj_{\Sigma_k} \Sigma_j$ are not parallel in $\Sigma$ for $\{ i, j, k \} = \{ 1, 2, 3 \}$.
	\end{enumerate}
	
	Lemma \ref{Lemma: residues and apartments}$(b)$ yields that $\proj_{R_i} R_j$ is a panel for all $i \neq j \in \{ 1, 2, 3 \}$. Now Lemma \ref{Lemma: results about parallel residues}$(c)$ implies that $\proj_{R_k} R_i$ and $\proj_{R_k} R_j$ are not parallel, where $\{ i, j, k \} = \{ 1, 2, 3 \}$. By definition, $T$ is a $\Sigma$-triangle.
\end{proof}

\begin{remark}
	Let $\Sigma$ be an apartment of $\Delta$ and let $T = \{ R_i, R_j, R_k \}$ be a $\Sigma$-triangle. Then $\proj_{R_i} R_j$ and $\proj_{R_j} R_i$ are panels by assumption and $\Sigma \cap \proj_{R_i} R_j$ and $\Sigma \cap \proj_{R_j} R_i$ are panels in $\Sigma$ by Lemma \ref{Lemma: residues and apartments}$(b)$. By Lemma \ref{Lemma: results about parallel residues}$(a)$ and $(c)$, the panels $\Sigma \cap \proj_{R_i} R_j$ and $\Sigma \cap \proj_{R_j} R_i$ are parallel in $\Sigma$. It follows from Lemma \ref{Lemma: CM06Prop2.7} that there exists (up to sign) a unique root $\beta_k \in \Phi^{\Sigma}$ with $\Sigma \cap \proj_{R_i} R_j, \Sigma \cap \proj_{R_j} R_i \in \partial \beta_k$.
\end{remark}

\begin{lemma}\label{Lemma: triangle yields reflection triangle}
	Let $\Sigma$ be an apartment and let $T = \{ R_i, R_j, R_k \}$ be a $\Sigma$-triangle. Then the following hold:
	\begin{enumerate}[label=(\alph*)]
		\item Let $\beta_k \in \Phi^{\Sigma}$ be one of the two roots with $\Sigma \cap \proj_{R_i} R_j, \Sigma \cap \proj_{R_j} R_i \in \partial \beta_k$. Then $\{ r_{\beta_i}, r_{\beta_j}, r_{\beta_k} \}$ is a reflection triangle. Moreover, there exists $\gamma_l \in \{ \beta_l, -\beta_l \}$ with $\Sigma \cap R_l \subseteq \gamma_l$ for all $l \in \{ i, j, k \}$, and $\{ \gamma_i, \gamma_j, \gamma_k \}$ is a combinatorial triangle.
		
		\item If $(W, S)$ is $2$-complete and $\tilde{A}_2$-free, then $R_i \cap R_j \cap R_k$ is non-empty and contains a unique chamber.
		
		\item If $(W, S)$ is $2$-complete, then $\proj_{R_i} R_j \cap \proj_{R_i} R_k$ is non-empty and contains a unique chamber.
	\end{enumerate}
\end{lemma}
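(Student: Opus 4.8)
The plan is to transport everything into the apartment $\Sigma$, which I identify with $\Sigma(W,S)$ (so $\Phi^{\Sigma}\cong\Phi$), and to work with the rank-$2$ residues $\Sigma_l := \Sigma\cap R_l$ for $l\in\{i,j,k\}$. First I would record the basic structure. The residues $R_i,R_j,R_k$ are pairwise distinct, since $R_a=R_b$ would give $\proj_{R_a}R_b=R_a$, not a panel, contradicting (T1); as a residue of $\Sigma$ determines its type and is determined by type plus any chamber, the $\Sigma_l$ are pairwise distinct as well. By Lemma~\ref{Lemma: residues and apartments} each $\Sigma\cap\proj_{R_a}R_b=\proj_{\Sigma_a}\Sigma_b$ is a panel of $\Sigma$, and by Lemma~\ref{Lemma: results about parallel residues}$(c)$ two panels lying in a common $\Sigma_l$ are parallel in $\Delta$ iff they are parallel in $\Sigma$, so (T2) says exactly that the two panels of $\Sigma_l$ coming from the other two residues lie on distinct walls. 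I will also use the elementary observation that a reflection $r_{\beta}$ stabilising a panel $P$ of a spherical rank-$2$ residue $R$ must stabilise $R$: $r_{\beta}$ permutes the chambers of $P\subseteq R$, so $r_{\beta}(R)$ is a residue of the same type as $R$ sharing a chamber with $R$, whence $r_{\beta}(R)=R$.

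For part $(a)$, the defining property of the $\beta_l$ together with this observation gives cyclically $\Sigma_i\in\partial^2\beta_j\cap\partial^2\beta_k$, $\Sigma_j\in\partial^2\beta_i\cap\partial^2\beta_k$ and $\Sigma_k\in\partial^2\beta_i\cap\partial^2\beta_j$; since (T2) forces $\beta_a\neq\pm\beta_b$, each $\partial^2\beta_a\cap\partial^2\beta_b$ is non-empty, so $o(r_{\beta_a}r_{\beta_b})<\infty$. The crux is the reflection-triangle condition $\bigcap_{t}\partial^2\beta_t=\emptyset$. Suppose a residue $\sigma$ lay in all three. Fix a pair, say $r_{\beta_i},r_{\beta_j}$, which stabilise both $\Sigma_k$ and $\sigma$; then they stabilise the residue $\proj_{\Sigma_k}\sigma$ of $\Sigma_k$. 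This cannot be a chamber (a reflection fixes none) nor a panel (a panel of $\Sigma_k$ is stabilised by a unique reflection, while $r_{\beta_i}\neq r_{\beta_j}$ by (T2)), so $\proj_{\Sigma_k}\sigma=\Sigma_k$; symmetrically $\proj_{\sigma}\Sigma_k=\sigma$, i.e.\ $\Sigma_k$ and $\sigma$ are parallel. Running this for the three pairs shows $\Sigma_i,\Sigma_j,\Sigma_k$ are each parallel to $\sigma$, hence pairwise parallel by Lemma~\ref{Lemma: CM06Prop2.7} (parallelism being "same stabilising reflections"); but then $\proj_{\Sigma_i}\Sigma_j=\Sigma_i$ is not a panel, contradicting (T1). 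This proves (CT1). To finish, each $\Sigma_l\notin\partial^2\beta_l$ (else $\Sigma_l\in\bigcap_t\partial^2\beta_t$), so by \cite[Lemma~$2.2$]{BiCoxGrowth} (as in Remark~\ref{Remark: reflection triangle plus orientation yields triangle}) exactly one of $\pm\beta_l$ contains $\Sigma_l$; calling it $\gamma_l$ and taking $\sigma:=\Sigma_l\in\partial^2\beta_m\cap\partial^2\beta_n$ verifies (CT2), so $\{\gamma_i,\gamma_j,\gamma_k\}$ is a combinatorial triangle.

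For part $(c)$ I would invoke $(a)$ and, since $(W,S)$ is $2$-complete, Lemma~\ref{Lemma: reflection triangle is a chamber} to obtain a unique chamber $c_l\in\gamma_m\cap\gamma_n\cap\Sigma_l$ for each $l$ (note $\Sigma_l\subseteq\gamma_l$, so in fact $c_l\in\gamma_i\cap\gamma_j\cap\gamma_k$). Mimicking the computation in the proof of Theorem~\ref{Theorem: combinatorial triangle is chamber} and using convexity of roots together with $\Sigma_i\cap\gamma_j\ni c_i$ and $\Sigma_i\cap\gamma_k\ni c_i$, one gets $\proj_{R_i}c_j=\proj_{\Sigma_i}c_j\in\gamma_j\cap\gamma_k\cap\Sigma_i=\{c_i\}$, and likewise $\proj_{R_i}c_k=c_i$. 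Since $c_j\in\Sigma_j\subseteq R_j$ and $c_k\in\Sigma_k\subseteq R_k$, this yields $c_i\in\proj_{R_i}R_j\cap\proj_{R_i}R_k$, so the intersection is non-empty. As these two projections are panels by (T1) which are non-parallel by (T2), they are distinct, and two distinct panels meet in at most one chamber; hence the intersection is exactly $\{c_i\}$. Part $(b)$ then follows by combining $(c)$ with the count in the apartment: for any $x\in R_i\cap R_j\cap R_k$ one has $x=\proj_{R_i}x\in\proj_{R_i}R_j$ and $x\in\proj_{R_i}R_k$, so $R_i\cap R_j\cap R_k\subseteq\proj_{R_i}R_j\cap\proj_{R_i}R_k$ contains at most one chamber by $(c)$; non-emptiness comes from Theorem~\ref{Theorem: combinatorial triangle is chamber} (this is where $\tilde{A}_2$-freeness enters), applied to the combinatorial triangle of $(a)$, which produces a chamber $c\in\Sigma_i\cap\Sigma_j\cap\Sigma_k\subseteq R_i\cap R_j\cap R_k$.

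I expect the main obstacle to be the non-concurrency step in $(a)$, i.e.\ passing from a hypothetical common residue $\sigma\in\bigcap_t\partial^2\beta_t$ to a contradiction. The key is the observation above that two \emph{distinct} reflections stabilising both $\Sigma_k$ and $\sigma$ force $\proj_{\Sigma_k}\sigma$ to have full rank, so that concurrency degenerates into pairwise parallelism of the vertex residues and collides with (T1); here (T2) is used precisely to guarantee the distinctness of the reflections. The remainder of the lemma is bookkeeping between building and apartment projections via Lemma~\ref{Lemma: residues and apartments} and an application of the results already established for $\Sigma$.
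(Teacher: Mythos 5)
Your proof is correct and follows essentially the same route as the paper: reduce to the apartment via Lemma \ref{Lemma: residues and apartments}, derive (CT1) by showing that a hypothetical common element of the three $\partial^2\beta_l$ would force the $\Sigma_l$ to be pairwise parallel and hence violate (T1), obtain the orientation as in Remark \ref{Remark: reflection triangle plus orientation yields triangle}, and deduce (b) and (c) from Theorem \ref{Theorem: combinatorial triangle is chamber}, Lemma \ref{Lemma: reflection triangle is a chamber} and convexity of roots. The only (harmless) deviations are that you re-prove, via the projection $\proj_{\Sigma_k}\sigma$ being stabilized by two distinct reflections, the parallelism statement the paper imports from external references, and that you obtain the uniqueness in (b) through the containment $R_i\cap R_j\cap R_k\subseteq \proj_{R_i}R_j\cap\proj_{R_i}R_k$ rather than the paper's rank argument on the residue $R_i\cap R_j\cap R_k$.
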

\begin{proof}
	For all $a\neq b \in \{i, j, k\}$ we abbreviate the following:
	\allowdisplaybreaks
	\begin{align*}
		& \Sigma_a := \Sigma \cap R_a, && P_a^b := \proj_{R_a} R_b, && \Sigma_a^b := \Sigma \cap P_a^b.
	\end{align*}
	As $\Sigma_k^j \in \partial \beta_i$ and $\Sigma_k^j \subseteq \Sigma_k$, we have $\Sigma_k \in \partial^2 \beta_i$. Using similar arguments, we deduce $\Sigma_k \in \partial^2 \beta_i \cap \partial^2 \beta_j$, which implies $o(r_{\beta_i} r_{\beta_j}) < \infty$. Next we show $\partial^2 \beta_i \cap \partial^2 \beta_j \cap \partial^2 \beta_k = \emptyset$. Assume by contrary that there exists $ Q \in \partial^2 \beta_i \cap \partial^2 \beta_j \cap \partial^2 \beta_k$. Then $Q$ and $\Sigma_i$ (resp.\ $Q$ and $\Sigma_j$) are parallel by \cite[Lemma $2.5(a)$]{BiCoxGrowth}. Now \cite[Corollary $21.21$]{MPW15} implies that $\Sigma_i$ and $\Sigma_j$ are parallel. Using Lemma \ref{Lemma: results about parallel residues}$(c)$, the residues $R_i$ and $R_j$ are parallel as well. But $\proj_{R_i} R_j$ is a panel, hence $\proj_{R_i} R_j \neq R_i$, which is a contradiction. We conclude $\partial^2 \beta_i \cap \partial^2 \beta_j \cap \partial^2 \beta_k = \emptyset$ and $\{ r_{\beta_i}, r_{\beta_j}, r_{\beta_k} \}$ is a reflection triangle. The second part of $(a)$ follows from Remark \ref{Remark: reflection triangle plus orientation yields triangle}.
	
	For Assertion $(b)$ we note that $\Sigma_i \cap \Sigma_j \cap \Sigma_k$ is non-empty and contains a unique chamber by $(a)$ and Theorem \ref{Theorem: combinatorial triangle is chamber}. This implies $R := R_i \cap R_j \cap R_k \neq \emptyset$ and hence $R$ is a residue. As $\proj_{R_i} R_j$ is a panel, $R$ is either a panel or a single chamber. Suppose $R$ is a panel. Then $\proj_{R_i} R_j = R = \proj_{R_i} R_k$ and, hence, $\proj_{R_i} R_j$ and $\proj_{R_i} R_k$ are parallel. This is a contradiction to the fact that $T$ is a triangle. Hence $R$ is a single chamber and we are done.
	
	For assertion $(c)$ we first note that $\vert \gamma_i \cap \gamma_j \cap \Sigma_k \vert = 1$ holds by Lemma \ref{Lemma: reflection triangle is a chamber}. We denote the unique chamber contained in $\gamma_i \cap \gamma_j \cap \Sigma_k$ by $c_k$. As roots are convex, we have $\proj_{R_i} c_j, \proj_{R_i} c_k \in \gamma_j \cap \gamma_k \cap \Sigma_i = \{ c_i \}$. This implies $P_i^j \cap P_i^k \neq \emptyset$. Assume that $P_i^j \cap P_i^k$ is not a chamber. As both are panels, we must have $P_i^j = P_i^k$. But then $P_i^j$ and $P_i^k$ would be parallel, which is a contradiction. This finishes the claim.
\end{proof}

\begin{convention}\label{Convention: triangle}
	Let $T = \{R_1, R_2, R_3\}$ be a triangle. For $i\neq j \in \{1, 2, 3\}$ we let $P_i^j := \proj_{R_i} R_j$. Note that $P_i^j$ is a panel by definition. By Lemma \ref{Lemma: results about parallel residues}$(a)$, the panels $P_i^j$ and $P_j^i$ are parallel. By Lemma \ref{Lemma: results about parallel panels}$(a)$ there exists a compatible path $(P_0 := P_2^1, \ldots, P_n := P_1^2)$ between $P_2^1$ and $P_1^2$, and a compatible path $(Q_0 := P_3^2, \ldots, Q_m := P_2^3)$ between $P_3^2$ and $P_2^3$. 
	
	We extend the compatible path $(P_0, \ldots, P_n)$ by two panels. Let $P_{n+1}$ be any panel contained in $R_1$ which is opposite to $P_n$ in $R_1$. Note that $R(P_n, P_{n+1}) = R_1$. As $\proj_{R_1} P_0 = P_n$, the path $(P_0, \ldots, P_{n+1})$ is again a compatible path. Similarly, we obtain a compatible path $(P_{-1}, P_0, \ldots, P_{n+1})$ with $R(P_{-1}, P_0) = R_2$, as well as a compatible path $(Q_{-1}, \ldots, Q_{m+1})$ with $R(Q_{-1}, Q_0) = R_3$ and $R(Q_m, Q_{m+1}) = R_2$.
\end{convention}

\subsection*{A technical result} 

Assume that $(W, S)$ is $2$-complete and let $T = \{ R_1, R_2, R_3 \}$ be a triangle, which is not a $\Sigma$-triangle for any apartment $\Sigma$. Let $c\in P_3^1$ and let $d := \proj_{P_2^1} c$. Then $\proj_{R_1} d \neq d$, as $T$ is not a $\Sigma$-triangle. Let $(d_0 := d, \ldots, d_k := \proj_{R_1} d)$ be a minimal gallery such that for all $0 \leq i \leq k-1$ there exists $0 \leq j_i \leq n-1$ with $\{ d_i, d_{i+1} \} \subseteq R(P_{j_i}, P_{j_i +1})$. Let $0 \leq z \leq k-1$ be maximal such that there exists an apartment containing $c, d_0, \ldots, d_z$, and let $\Sigma$ be such an apartment. Let $\alpha \in \Phi^{\Sigma}$ be the root containing $d_z$ but not $P \cap \Sigma$, where $P$ is the panel containing $d_z$ and $d_{z+1}$. Note that $\Sigma \cap P_2^1 \neq \emptyset$ and $\Sigma \cap P_2^3 \neq \emptyset$ (cf.\ Lemma \ref{Lemma: residues and apartments}). Let $\gamma_3 \in \Phi^{\Sigma}$ be the root containing $d$ but not $\Sigma \cap P_2^1$ and let $\gamma_1 \in \Phi^{\Sigma}$ be the root containing $d$ with $\Sigma \cap P_2^3 \in \partial \gamma_1$.

\begin{proposition}\label{Proposition: auxiliary result}
	The following hold:
	\begin{enumerate}[label=(\alph*)]
		\item There exists a unique $0 \leq j \leq m$ such that $\Sigma \cap R(Q_{j-1}, Q_j) \in \partial^2 \alpha$.
		
		\item $\{ R(P_{j_z}, P_{j_z +1}), R(Q_{j-1}, Q_j), R_2 \}$ is a $\Sigma$-triangle.
		
		\item $\{ r_{\alpha}, r_{\gamma_1}, r_{\gamma_3} \}$ is a reflection triangle.
	\end{enumerate}
\end{proposition}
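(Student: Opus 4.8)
The plan is to extract, inside the fixed apartment $\Sigma$, enough wall-- and residue--incidences to recognise $\{ r_{\alpha}, r_{\gamma_1}, r_{\gamma_3} \}$ as a reflection triangle, and then to feed this reflection triangle together with three concrete residues into Lemma \ref{Lemma: construction of triangles}. I would begin by collecting the basic incidences. Maximality of $z$ and Lemma \ref{Lemma: no apartment contains chambers}, applied to the minimal gallery $(d_0, \ldots, d_{z+1})$, give $c \notin \alpha$; minimality of $(d_0, \ldots, d_k)$ forces the wall $\partial \alpha$ to be crossed only at the step $\{ d_z, d_{z+1} \}$, so that $d = d_0, \ldots, d_z$ all lie in $\alpha$ and $\partial \alpha = P \cap \Sigma$. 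Since $\Sigma \cap P_2^1 \in \partial \gamma_3$ and $\Sigma \cap P_2^3 \in \partial \gamma_1$ with both panels contained in $R_2$, these upgrade to $\Sigma \cap R_2 \in \partial^2 \gamma_1 \cap \partial^2 \gamma_3$. Moreover, all panels of the compatible path $(P_0, \ldots, P_n)$ are parallel to $P_0 = P_2^1$, so by Lemma \ref{Lemma: CM06Prop2.7} their traces in $\Sigma$ lie on the wall $\partial \gamma_3$; the same argument applied to $(Q_0, \ldots, Q_m)$ puts the traces of the $Q$--panels on $\partial \gamma_1$. Hence $\Sigma \cap R(P_{j_z}, P_{j_z+1}) \in \partial^2 \alpha \cap \partial^2 \gamma_3$ (the first membership because the panel $P \cap \Sigma \in \partial \alpha$ lies inside it), and every $\Sigma \cap R(Q_{j-1}, Q_j)$ lies in $\partial^2 \gamma_1$.

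For (a) I would prove existence by a separation argument and uniqueness by a counting bound. The wall $\partial \alpha$ separates the two ends of the $Q$--chain: on the one hand $c \in R_3 = R(Q_{-1}, Q_0)$ with $c \notin \alpha$, and on the other hand $\proj_{P_2^3} d \in R(Q_{m-1}, Q_m)$ lies in $\alpha$, because projecting $d \in \alpha$ onto $P_2^3$ only crosses the wall $\partial \gamma_1 \neq \partial \alpha$ and so cannot leave $\alpha$. A gallery in $\Sigma$ running through the chain from one end chamber to the other must therefore cross $\partial \alpha$, and the crossing panel sits in some $\Sigma \cap R(Q_{j-1}, Q_j)$, giving a $j$ with $\Sigma \cap R(Q_{j-1}, Q_j) \in \partial^2 \alpha$. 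Uniqueness is immediate: by the previous paragraph all these residues lie in $\partial^2 \gamma_1$, and since $\alpha \neq \pm \gamma_1$, Lemma \ref{Lemma: 2-dimensional rank 2 residues not parallel}$(b)$ gives $\vert \partial^2 \alpha \cap \partial^2 \gamma_1 \vert \leq 1$.

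With $j$ fixed, parts (c) and (b) become bookkeeping. The three residues $\Sigma \cap R_2$, $\Sigma \cap R(P_{j_z}, P_{j_z+1})$ and $\Sigma \cap R(Q_{j-1}, Q_j)$ realise the pairwise intersections $\partial^2 \gamma_1 \cap \partial^2 \gamma_3$, $\partial^2 \alpha \cap \partial^2 \gamma_3$ and $\partial^2 \alpha \cap \partial^2 \gamma_1$ as non-empty, so each of $r_{\alpha} r_{\gamma_1}$, $r_{\alpha} r_{\gamma_3}$, $r_{\gamma_1} r_{\gamma_3}$ has finite order, the three walls being pairwise distinct by the positions of $c$ and $d$ and by axiom (T2). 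The triple intersection $\partial^2 \alpha \cap \partial^2 \gamma_1 \cap \partial^2 \gamma_3$ is empty: any element of it would, by the bound $\vert \partial^2 \beta \cap \partial^2 \beta' \vert \leq 1$ of Lemma \ref{Lemma: 2-dimensional rank 2 residues not parallel}$(b)$, have to coincide with both $\Sigma \cap R_2$ and $\Sigma \cap R(P_{j_z}, P_{j_z+1})$, forcing $R_2 = R(P_{j_z}, P_{j_z+1})$, which is false. This proves (c). Finally (b) follows by applying Lemma \ref{Lemma: construction of triangles} to the reflection triangle $\{ r_{\alpha}, r_{\gamma_1}, r_{\gamma_3} \}$ and to the residues $R(P_{j_z}, P_{j_z+1})$, $R(Q_{j-1}, Q_j)$, $R_2$, whose traces in $\Sigma$ lie in the required pairs of walls.

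I expect the main obstacle to be part (a): one must first ensure that $\Sigma$ genuinely meets each residue $R(Q_{j-1}, Q_j)$ of the chain and that the $Q$--panel traces really do land on the single wall $\partial \gamma_1$ (this is where the parallelism of compatible-path panels and Lemma \ref{Lemma: residues and apartments} are needed), and then one must make the separation argument precise, namely that a minimal gallery along the chain crosses $\partial \alpha$ exactly once. The incidences $\alpha \neq \pm \gamma_1$ and $\alpha \neq \pm \gamma_3$, used silently above, also have to be read off from the positions of $c$, $d$ relative to the three walls before the counting bounds can be invoked.
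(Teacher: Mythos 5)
Your overall architecture coincides with the paper's: establish $\Sigma \cap R(P_{j_z},P_{j_z+1}) \in \partial^2\alpha\cap\partial^2\gamma_3$ and $\Sigma\cap R_2\in\partial^2\gamma_1\cap\partial^2\gamma_3$ via parallelism of compatible-path panels and Lemma \ref{Lemma: CM06Prop2.7}, locate a member of $\partial^2\alpha\cap\partial^2\gamma_1$ among the $\Sigma\cap R(Q_{j-1},Q_j)$ by a separation argument along the $Q$-chain (the paper splits this into the two cases $\proj_{R_3}f\in\pm\alpha$, but your unified version is the same argument), get uniqueness and the empty triple intersection from Lemma \ref{Lemma: 2-dimensional rank 2 residues not parallel}$(b)$, and finish with Lemma \ref{Lemma: construction of triangles}.

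There is, however, one genuine gap, and it sits exactly where the paper does its real work: your justification that $f:=\proj_{P_2^3}d$ lies in $\alpha$. You assert that ``projecting $d\in\alpha$ onto $P_2^3$ only crosses the wall $\partial\gamma_1$''. This is false: a minimal gallery from $d$ to $\proj_{P_2^3}d$ crosses every wall separating these two chambers (of which there may be many, namely all walls $\partial\beta$ with $\Sigma\cap R_2\in\partial^2\beta$ that separate them), and $\partial\gamma_1$ is precisely one it does \emph{not} cross, since $\proj_{P_2^3}d$ is on the same side of the wall of the panel $P_2^3$ as $d$. Whether $\partial\alpha$ is crossed is exactly the question, so as written the step is circular. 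The paper proves $f\in\alpha$ differently: since $\proj_{R_2}d_k=d_0$ by Lemma \ref{Lemma: triangle projection}, the minimal gallery $(d_{z+1},\dots,d_0)$ extends to a minimal gallery from $d_{z+1}$ to $f$; replacing $d_{z+1}$ by the chamber $e\in\Sigma$ with $\delta(d_z,e)=\delta(d_z,d_{z+1})$ yields a minimal gallery in $\Sigma$ whose first step crosses $\partial\alpha$ into $\alpha$, and since a minimal gallery crosses a wall at most once, $f\in\alpha$. (An alternative repair is to first show $\Sigma\cap R_2\notin\partial^2\alpha$ --- via $\partial^2\alpha\cap\partial^2\gamma_3=\{\Sigma\cap R_P\}$ and $R_P\neq R_2$, the latter again needing Lemma \ref{Lemma: triangle projection} --- and only then conclude that no wall separating $d$ from $f$ inside $\Sigma\cap R_2$ can be $\partial\alpha$; but this reordering must be made explicit, and your ``which is false'' for $R_2=R(P_{j_z},P_{j_z+1})$ likewise needs the projection argument rather than being self-evident.) Everything else in your proposal is either correct or a minor bookkeeping point you already flagged (e.g.\ $\alpha\neq\pm\gamma_1,\pm\gamma_3$).
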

\begin{proof}
	We abbreviate $R_P := R(P_{j_z}, P_{j_z +1})$. As $\Sigma \cap R_P \in \partial^2 \alpha \cap \partial^2 \gamma_3$ by Lemma \ref{Lemma: CM06Prop2.7}, we have $o(r_{\alpha} r_{\gamma_3}) < \infty$. Moreover, we have $\Sigma \cap R_2 \in \partial^2 \gamma_1 \cap \partial^2 \gamma_3$ and hence $o(r_{\gamma_1} r_{\gamma_3}) < \infty$. Let $f \in P_2^3 \cap \gamma_1$ be the unique chamber. Then $\proj_{R_3} f \in P_3^2$ and as $\Sigma \cap R_3 \neq \emptyset$, we have $\proj_{R_3} f \in P_3^2 \cap \Sigma$. As $P_2^3$ and $P_3^2$ are parallel, Lemma \ref{Lemma: results about parallel residues}$(c)$ yields that $\Sigma \cap P_2^3$ and $\Sigma \cap P_3^2$ are parallel. As $\Sigma \cap P_2^3 \in \partial \gamma_1$, Lemma \ref{Lemma: CM06Prop2.7} implies $\Sigma \cap P_3^2 \in \partial \gamma_1$ and hence $\proj_{R_3} f \in \gamma_1$ (as roots are convex). Note that $\{ \Sigma \cap R(Q_{j-1}, Q_j) \mid 0 \leq j \leq m \} \subseteq \partial^2 \gamma_1$. We distinguish the following two cases:
	\begin{enumerate}[label=(\roman*)]
		\item\label{Case i} $\proj_{R_3} f \in \alpha$: Then, as $c\notin \alpha$ by Lemma \ref{Lemma: no apartment contains chambers}, we have $\Sigma \cap R_3 \in \partial^2 \gamma_1 \cap \partial^2 \alpha \neq \emptyset$. In particular, we deduce $o(r_{\alpha} r_{\gamma_1}) < \infty$.
		
		\item\label{Case ii} $\proj_{R_3} f \in (-\alpha)$: By Lemma \ref{Lemma: triangle projection} we have $\proj_{R_2} d_k = d_0$. This implies that we can extend $(d_{z+1}, \ldots, d_0)$ to a minimal gallery from $d_{z+1}$ to $f$. Replacing $d_{z+1}$ by the unique chamber contained in $\Sigma$ which is $\delta(d_z, d_{z+1})$-adjacent to $d_z$, we infer $f\in \alpha$, as a minimal gallery crosses each wall at most once. As $\proj_{R_3} f \in (-\alpha)$, there exists $1 \leq j \leq m$ with $Q_{j-1} \cap \Sigma \subseteq (-\alpha)$ and $Q_j \cap \Sigma \subseteq \alpha$. Then $R(Q_{j-1}, Q_j) \cap \Sigma \in \partial^2 \gamma_1 \cap \partial^2 \alpha$ and hence $o(r_{\gamma_1} r_{\alpha}) < \infty$.
	\end{enumerate}
	
	Note that $\partial^2 \gamma_1 \cap \partial^2 \gamma_3 = \{ \Sigma \cap R_2 \}$ and $\partial^2 \alpha \cap \partial^2 \gamma_3 = \{ \Sigma \cap R_P \}$ by Lemma \ref{Lemma: 2-dimensional rank 2 residues not parallel}$(b)$. Assume that $\Sigma \cap R_2 \in \partial^2 \alpha$. Then we would have $\Sigma \cap R_P = \Sigma \cap R_2$ and hence $d_z, d_{z+1} \in R_P = R_2$. This implies $\proj_{R_2} \left( \proj_{R_1} d \right) \neq d$, which is a contradiction to Lemma \ref{Lemma: triangle projection}. We infer $\partial^2 \gamma_3 \cap \partial^2 \gamma_1 \cap \partial^2 \alpha = \emptyset$ and hence $\{ r_{\gamma_1}, r_{\gamma_3}, r_{\alpha} \}$ is a reflection triangle. Recall that $\{ \Sigma \cap R(Q_{j-1}, Q_j) \mid 0 \leq j \leq m \} \subseteq \partial^2 \gamma_1$ and we know by \ref{Case i} and \ref{Case ii} that $\Sigma \cap R(Q_{j-1}, Q_j) \in \partial^2 \alpha$ for some $0 \leq j \leq m$. The uniqueness of $j$ follows from the fact that $\partial^2 \gamma_1 \cap \partial^2 \alpha$ contains exactly one element by Lemma \ref{Lemma: 2-dimensional rank 2 residues not parallel}$(b)$. Let $R_Q \in \{ R(Q_{j-1}, Q_j) \mid 0 \leq j \leq m \}$ with $\partial^2 \alpha \cap \partial^2 \gamma_1 = \{ \Sigma \cap R_Q \}$. Then Lemma \ref{Lemma: construction of triangles} implies that $\{ R_2, R_P, R_Q \}$ is a $\Sigma$-triangle.
\end{proof}

\subsection*{Two generalizations}

Theorem \ref{Theorem: intersection of two residues is a chamber} can be seen as a generalization of Lemma \ref{Lemma: reflection triangle is a chamber}.

\begin{theorem}\label{Theorem: intersection of two residues is a chamber}
	Suppose that $(W, S)$ is $2$-complete and let $T = \{ R_1, R_2, R_3 \}$ be a triangle. Then $\proj_{R_2} R_1 \cap \proj_{R_2} R_3$ is non-empty and contains a unique chamber.
\end{theorem}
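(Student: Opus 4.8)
The plan is to reduce the statement to the case of a $\Sigma$-triangle, for which it is already contained in Lemma \ref{Lemma: triangle yields reflection triangle}(c). Write $P_2^1 := \proj_{R_2} R_1$ and $P_2^3 := \proj_{R_2} R_3$; these are panels by axiom (T1) and we must show that $P_2^1 \cap P_2^3$ consists of a single chamber. If $T$ is a $\Sigma$-triangle for some apartment $\Sigma$, then Lemma \ref{Lemma: triangle yields reflection triangle}(c), applied with $R_i = R_2$, $R_j = R_1$ and $R_k = R_3$, gives the claim at once. Hence we may assume that $T$ is not a $\Sigma$-triangle for any apartment, which is exactly the hypothesis of the technical setup preceding Proposition \ref{Proposition: auxiliary result}.

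I would then fix $c \in P_3^1$, form $d := \proj_{P_2^1} c$ together with the minimal gallery $(d_0, \ldots, d_k)$ and the apartment $\Sigma$ as in that setup, and invoke Proposition \ref{Proposition: auxiliary result}(a) and (b). This produces indices $j_z$ and $j$ and a $\Sigma$-triangle $T' = \{ R_P, R_Q, R_2 \}$, where $R_P := R(P_{j_z}, P_{j_z+1})$ lies on the compatible path $(P_{-1}, \ldots, P_{n+1})$ and $R_Q := R(Q_{j-1}, Q_j)$ lies on the compatible path $(Q_{-1}, \ldots, Q_{m+1})$ of Convention \ref{Convention: triangle}. Applying Lemma \ref{Lemma: triangle yields reflection triangle}(c) to the $\Sigma$-triangle $T'$ yields that $\proj_{R_2} R_P \cap \proj_{R_2} R_Q$ is non-empty and contains a unique chamber.

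It then remains to identify these two projections with $P_2^1$ and $P_2^3$. Since $R_1 = R(P_n, P_{n+1})$ is the initial residue and $R_2 = R(P_{-1}, P_0)$ the final residue of the reversed compatible path $(P_{n+1}, \ldots, P_{-1})$, Lemma \ref{Lemma: Projection on compatible paths} gives $\proj_{R_2} R_P = \proj_{R_2} R_1 = P_2^1$, because $R_P$ is one of the intermediate residues of that path. Likewise, since $R_3 = R(Q_{-1}, Q_0)$ is the initial residue and $R_2 = R(Q_m, Q_{m+1})$ the final residue of $(Q_{-1}, \ldots, Q_{m+1})$, Lemma \ref{Lemma: Projection on compatible paths} gives $\proj_{R_2} R_Q = \proj_{R_2} R_3 = P_2^3$. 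Combining these, $\proj_{R_2} R_P \cap \proj_{R_2} R_Q = P_2^1 \cap P_2^3$, and the statement follows.

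The hard part is the passage to a $\Sigma$-triangle, i.e.\ the entire content of Proposition \ref{Proposition: auxiliary result}: the two panels $P_2^1$ and $P_2^3$ need not lie in a common apartment, so one cannot argue in a single apartment directly, and the construction of $\Sigma$, $\alpha$, $\gamma_1$, $\gamma_3$ and the replacement triangle $T'$ is what bridges this gap. Once $T'$ is in hand, the identification of the relevant projections through Lemma \ref{Lemma: Projection on compatible paths} is routine bookkeeping with compatible paths.
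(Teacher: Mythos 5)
Your proposal is correct and follows essentially the same route as the paper's own proof: reduction to the $\Sigma$-triangle case via Proposition \ref{Proposition: auxiliary result}, followed by the identification of $\proj_{R_2} R_P$ and $\proj_{R_2} R_Q$ with $P_2^1$ and $P_2^3$ using Lemma \ref{Lemma: Projection on compatible paths} applied to the reversed and extended compatible paths of Convention \ref{Convention: triangle}. The only cosmetic difference is that the paper first observes $P_2^1 \neq P_2^3$ from axiom (T2) to isolate non-emptiness as the remaining issue, whereas you obtain existence and uniqueness simultaneously from the set equality; both are fine.
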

\begin{proof}
	If $T$ is a $\Sigma$-triangle for some apartment $\Sigma$, then the claim follows from Lemma \ref{Lemma: triangle yields reflection triangle}$(c)$. Thus we can assume that $T$ is not a $\Sigma$-triangle for any apartment $\Sigma$. As $T$ is a triangle, we have $P_2^1 \neq P_2^3$ by axiom (T$2$). Thus it suffices to show that $P_2^1 \cap P_2^3 \neq \emptyset$. Let $c\in P_3^1$ be any chamber and let $d := \proj_{P_2^1} c$. As $T$ is not a $\Sigma$-triangle, we have $d \neq \proj_{R_1} d$. Note that $\proj_{R_1} d = \proj_{P_1^2} d$ holds by Lemma \ref{Lemma: Equal projection chamber on residues}. Using induction, there exists a minimal gallery $(d_0 := d, \ldots, d_k := \proj_{R_1} d)$ such that for all $0 \leq i \leq k-1$ there exists $0 \leq j_i \leq n-1$ with $\{ d_i, d_{i+1} \} \subseteq R(P_{j_i}, P_{j_i +1})$.
	
	Now let $0 \leq z \leq k$ be maximal such that there exists an apartment containing $c, d=d_0, \ldots, d_z$ and let $\Sigma$ be such an apartment. As $T$ is not a $\Sigma$-triangle, we have $z<k$. Define $R_P := R(P_{j_z}, P_{j_z +1})$. Let $\alpha \in \Phi^{\Sigma}$ be the root containing $d_z$ but not $\Sigma \cap P$, where $P$ is the panel containing $d_z$ and $d_{z+1}$. Let $0 \leq j \leq m$ be the unique element as in Proposition \ref{Proposition: auxiliary result}$(a)$ and define $R_Q := R(Q_{j-1}, Q_j)$. Then $\{ R_P, R_Q, R_2 \}$ is a $\Sigma$-triangle by Proposition \ref{Proposition: auxiliary result}$(b)$. Applying Lemma \ref{Lemma: triangle yields reflection triangle}$(c)$ we deduce that $\proj_{R_2} R_P \cap \proj_{R_2} R_Q$ is non-empty and contains a unique chamber. Now Lemma \ref{Lemma: Projection on compatible paths} implies $\proj_{R_2} R_Q = \proj_{R_2} R_3$ and $\proj_{R_2} R_P = \proj_{R_2} R_1$ (as $(P_{n+1}, \ldots, P_{-1})$ is a compatible path).
\end{proof}

\begin{theorem}\label{Theorem: 2-complete and A_2 tilde free triangle contains unique chamber}
	Suppose that $(W, S)$ is $2$-complete and $\tilde{A}_2$-free, and suppose that $T = \{ R_1, R_2, R_3 \}$ is a triangle. Then $R_1 \cap R_2 \cap R_3$ is non-empty and contains a unique chamber.
\end{theorem}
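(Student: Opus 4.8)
The plan is to reduce the general triangle case to the combinatorial triangle case already handled in Theorem~\ref{Theorem: combinatorial triangle is chamber}, exactly as in the proof of the preceding Theorem~\ref{Theorem: intersection of two residues is a chamber}. First I would dispose of the easy case: if $T$ is a $\Sigma$-triangle for some apartment $\Sigma$, then Lemma~\ref{Lemma: triangle yields reflection triangle}$(b)$ applies directly and gives that $R_1 \cap R_2 \cap R_3$ is non-empty and contains a unique chamber. So the real content is the case where $T$ is \emph{not} a $\Sigma$-triangle for any apartment $\Sigma$, and here I would run the same gallery/apartment-extension machinery developed in Proposition~\ref{Proposition: auxiliary result} and used in Theorem~\ref{Theorem: intersection of two residues is a chamber}.

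The key steps are as follows. I pick a chamber $c \in P_3^1$ and set $d := \proj_{P_2^1} c$; since $T$ is not a $\Sigma$-triangle, $d \neq \proj_{R_1} d$. I choose a minimal gallery $(d_0 = d, \ldots, d_k = \proj_{R_1} d)$ passing through the residues $R(P_{j_i}, P_{j_i+1})$ along the compatible path $(P_0, \ldots, P_n)$, take $0 \le z \le k$ maximal with an apartment $\Sigma$ containing $c, d_0, \ldots, d_z$ (so $z < k$), and form the root $\alpha \in \Phi^{\Sigma}$ and the index $j$ as in Proposition~\ref{Proposition: auxiliary result}$(a)$. Writing $R_P := R(P_{j_z}, P_{j_z+1})$ and $R_Q := R(Q_{j-1}, Q_j)$, Proposition~\ref{Proposition: auxiliary result}$(b)$ tells me that $\{ R_P, R_Q, R_2 \}$ is a genuine $\Sigma$-triangle. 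Applying Lemma~\ref{Lemma: triangle yields reflection triangle}$(b)$ to this $\Sigma$-triangle (which is legitimate now that $(W,S)$ is $2$-complete \emph{and} $\tilde{A}_2$-free) yields that $R_P \cap R_Q \cap R_2$ is non-empty and contains a unique chamber.

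The final step is to transport this conclusion from $\{R_P, R_Q, R_2\}$ back to the original triangle $\{R_1, R_2, R_3\}$. Here I would invoke Lemma~\ref{Lemma: Projection on compatible paths} in the same way as at the end of Theorem~\ref{Theorem: intersection of two residues is a chamber}: since $R_P$ lies on the compatible path from $P_2^1$ to $P_1^2$ (extended to $(P_{-1}, \ldots, P_{n+1})$) and $R_Q$ lies on the path from $P_3^2$ to $P_2^3$, the projections satisfy $\proj_{R_2} R_P = \proj_{R_2} R_1$ and $\proj_{R_2} R_Q = \proj_{R_2} R_3$. I expect that the unique chamber of $R_P \cap R_Q \cap R_2$ must coincide with the unique chamber of $R_1 \cap R_2 \cap R_3$, because the projection data onto $R_2$ pins down the common chamber, and one checks that $R_1 \cap R_2 \cap R_3 \neq \emptyset$ follows once the intersection chamber of the $\Sigma$-triangle is identified with a chamber lying in all three original residues.

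The main obstacle, and the step I would be most careful about, is precisely this transport argument: verifying that the chamber produced for the auxiliary $\Sigma$-triangle $\{R_P, R_Q, R_2\}$ genuinely lies in $R_1 \cap R_2 \cap R_3$ and not merely in the smaller residues $R_P, R_Q$. Since $R_P \subseteq R_1$ and $R_Q \subseteq R_3$ by construction (these rank-$2$ residues sit inside the original rank-$2$ residues along the compatible paths), the containment $R_P \cap R_Q \cap R_2 \subseteq R_1 \cap R_2 \cap R_3$ should hold, giving non-emptiness immediately; uniqueness then follows from the same panel-versus-chamber dichotomy used in Lemma~\ref{Lemma: triangle yields reflection triangle}$(b)$, namely that $\proj_{R_1} R_2$ being a panel forces $R_1 \cap R_2 \cap R_3$ to be a single chamber rather than a panel, the panel case being excluded by axiom (T$2$) of the triangle. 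I would double-check that the $\tilde{A}_2$-free hypothesis is what upgrades Proposition~\ref{Proposition: auxiliary result}'s reflection triangle into the setting where Theorem~\ref{Theorem: combinatorial triangle is chamber} (and hence Lemma~\ref{Lemma: triangle yields reflection triangle}$(b)$) applies with a unique chamber rather than merely a non-empty intersection.
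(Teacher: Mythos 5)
Your reduction to the $\Sigma$-triangle case and your use of Proposition~\ref{Proposition: auxiliary result} match the paper's setup, but the transport step you single out as the delicate one is indeed where the argument breaks, and your proposed fix does not work. The containment ``$R_P \subseteq R_1$ and $R_Q \subseteq R_3$ by construction'' is false: $R_P = R(P_{j_z}, P_{j_z+1})$ with $0 \leq j_z \leq n-1$ is one of the \emph{intermediate} rank-$2$ residues along the compatible path from $P_2^1$ to $P_1^2$, and both $R_P$ and $R_1$ have rank $2$, so $R_P \subseteq R_1$ would force $R_P = R_1$ --- which only happens at the far end of the extended path ($R(P_n,P_{n+1}) = R_1$), not at an index $j_z \leq n-1$. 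In fact the whole strategy of exhibiting a chamber of $R_1 \cap R_2 \cap R_3$ \emph{while assuming} $T$ is not a $\Sigma$-triangle is self-defeating: any chamber $c$ in that intersection lies in some apartment, and that apartment then meets all three residues, making $T$ a $\Sigma$-triangle after all. So in the non-$\Sigma$-triangle case the intersection is necessarily empty, and no transport argument can succeed; the only viable route is to show that this case cannot occur.

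That is exactly what the paper does, and it is the piece missing from your proposal. The paper first uses Theorem~\ref{Theorem: intersection of two residues is a chamber} to get the unique chambers $c_i \in P_i^j \cap P_i^k$, orders the pairwise distances without loss of generality as $\ell(c_3,c_2) \geq \ell(c_2,c_1) \geq \ell(c_3,c_1)$, and then, assuming $T$ is not a $\Sigma$-triangle, derives a contradiction: it identifies $R_P \cap R_Q \cap R_2 = \{d_0\} = \{c_2\}$, upgrades the reflection triangle $\{r_\alpha, r_{\gamma_1}, r_{\gamma_3}\}$ of Proposition~\ref{Proposition: auxiliary result}$(c)$ to a combinatorial triangle to force $z=0$ via Lemma~\ref{Lemma: reflection triangle is a chamber}, and then applies Lemma~\ref{Lemma: no combinatorial triangle} along the compatible path $(P_0,\ldots,P_n)$ to conclude $\ell(c_3,c_2) < \ell(c_3,c_1)$, contradicting the chosen ordering. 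Hence $T$ is a $\Sigma$-triangle and Lemma~\ref{Lemma: triangle yields reflection triangle}$(b)$ finishes the proof. Your proposal contains neither the distance-ordering device nor the length-growth contradiction, so as written it does not close the main case.
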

\begin{proof}
	We let $\{i, j, k\} = \{1, 2, 3\}$. We know by Theorem \ref{Theorem: intersection of two residues is a chamber} that $P_i^j \cap P_i^k$ is non-empty and contains a unique chamber, which we will denote by $c_i$. Note that $\proj_{R_j} c_i = \proj_{P_j^i} c_i$ by Lemma \ref{Lemma: Equal projection chamber on residues}. As $c_j \in P_j^i$, we have $\ell(\proj_{P_j^i} c_i, c_j) \leq 1$. By definition we have $P_j^k \neq P_j^i$ and hence $\proj_{P_j^k} c_i = c_j$. Without loss of generality we can assume $\ell(c_3, c_2) \geq \ell(c_2, c_1) \geq \ell(c_3, c_1)$. We define
	\begin{align*}
		&w_{32} := \delta(c_3, c_2), &&w_{21} := \delta(c_2, c_1), &&w_{31} := \delta(c_3, c_1).
	\end{align*}
	
	If $T$ is a $\Sigma$-triangle for some apartment $\Sigma$, then the claim follows from Lemma \ref{Lemma: triangle yields reflection triangle}$(b)$. Thus we can assume that $T$ is not a $\Sigma$-triangle for any apartment $\Sigma$. In particular, we have $\proj_{R_1} c_2 \neq c_2$ and $n>0$ (cf.\ Convention \ref{Convention: triangle}). Using induction, there exists a minimal gallery $(d_0 := c_2, \ldots, d_l := \proj_{R_1} c_2)$ such that for all $0 \leq x \leq l-1$ there exists $0 \leq j_x \leq n-1$ with $\{ d_x, d_{x+1} \} \subseteq R(P_{j_x}, P_{j_x +1})$. Let $0 \leq z \leq l$ be maximal such that there exists an apartment containing $c_3, d_0, \ldots, d_z$ and let $\Sigma$ be such an apartment. As $T$ is not a $\Sigma$-triangle, we deduce $z<l$. Let $0 \leq j \leq m$ be the unique element as in Proposition \ref{Proposition: auxiliary result}$(a)$. We define $R_P := R(P_{j_z}, P_{j_z +1})$ and $R_Q := R(Q_{j-1}, Q_j)$. Then $\{ R_P, R_Q, R_2 \}$ is a $\Sigma$-triangle by Proposition \ref{Proposition: auxiliary result}$(b)$. In particular, $R_P \cap R_2 \cap R_Q$ is non-empty and contains a unique chamber by Lemma \ref{Lemma: triangle yields reflection triangle}$(b)$.
	
	\emph{Claim: $R_P \cap R_2 \cap R_Q = \{d_0\}$.}
	
	By Lemma \ref{Lemma: Projection on compatible paths} we have $\proj_{R_2} R_P = \proj_{R_2} R_1 = P_2^1$ and $\proj_{R_2} R_Q = \proj_{R_2} R_3 = P_2^3$. As $R_2 \cap R_P \neq \emptyset$, the residue $R_2 \cap R_P$ is either a chamber, or a panel. If $R_2 \cap R_P$ would be a chamber, then \cite[Lemma $2.15$]{AB08} implies that $\proj_{R_2} R_P$ is also just a chamber. This is a contradiction and hence $R_2 \cap R_P$ is a panel. We infer $R_2 \cap R_P = \proj_{R_2} R_P$. Using the same arguments, we deduce $R_2 \cap R_Q = \proj_{R_2} R_Q$. We infer
	\[ R_2 \cap R_P \cap R_Q = ( \proj_{R_2} R_P ) \cap ( \proj_{R_2} R_Q ) = P_2^1 \cap P_2^3 = \{ d_0 \}. \]
	
	Let $\gamma_3 \in \Phi^{\Sigma}$ be the root containing $d_0$ but not $\Sigma \cap P_2^1$ and let $\alpha \in \Phi^{\Sigma}$ be the root containing $d_z$ but not $\Sigma \cap P$, where $P$ is the panel containing $d_z$ and $d_{z+1}$. Then $d_0, \ldots, d_z \in \gamma_3 \cap \alpha \cap R_P$. Let $\gamma_1 \in \Phi^{\Sigma}$ be the root containing $d_0$ with $\Sigma \cap P_2^3 \in \partial \gamma_1$. By Proposition \ref{Proposition: auxiliary result}$(c)$, we obtain that $\{ r_{\alpha}, r_{\gamma_1}, r_{\gamma_3} \}$ is a reflection triangle.
	
	As $\Sigma \cap R_2 \in \partial^2 \gamma_1 \cap \partial^2 \gamma_3$ and $d_0 \in \alpha$, we deduce $\Sigma \cap R_2 \subseteq \alpha$. Moreover, $\Sigma \cap R_Q \subseteq \gamma_3$ and $\Sigma \cap R_P \subseteq \gamma_1$. This implies that $\{ \alpha, \gamma_1, \gamma_3 \}$ is a combinatorial triangle. It follows from Lemma \ref{Lemma: reflection triangle is a chamber} that $z=0$. Suppose $\delta(c_3, d_1) = \delta(c_3, d_0) \delta(d_0, d_1)$. Then there would exist an apartment containing $c_3, d_0, d_1$, which is a contradiction to the maximality of $z$. Thus we have $\delta(c_3, d_0) = \delta(c_3, d_1)$ and hence $\ell(\delta(c_3, d_0) \delta(d_0, d_1) ) = \ell(c_3, d_0) -1$. Recall that $\ell(\proj_{R_2} c_3, c_2) \leq 1$ and that $\proj_{P_0} c_3 = \proj_{P_2^1} c_3 = c_2$ and $\proj_{P_n} c_3 = \proj_{P_1^2} c_3 = c_1$. We distinguish the following cases:
	\begin{enumerate}[label=(\alph*)]
		\item $\ell(\proj_{R_2} c_3, c_2) = 0$: As $\ell(\delta(c_3, d_0) \delta(d_0, d_1) ) = \ell(c_3, d_0) -1$, we infer from Lemma \ref{Lemma: parallel panels - word become longer} that $\ell(\proj_{R(P_0, P_1)} c_3, c_2) = 1$. 
		
		\item $\ell(\proj_{R_2} c_3, c_2) = 1$: Then Lemma \ref{Lemma: BiCoxGrowth2.8} implies similarly as in the previous case that $\ell(\proj_{R(P_0, P_1)} c_3, c_2) = 1$.
	\end{enumerate}
	As $P_0$ and $P_1$ are opposite in $R(P_0, P_1)$ and $\delta(c_3, d_0) = \delta(c_3, d_1)$, we deduce that $\ell(c_3, \proj_{P_0} c_3) = \ell(c_3, c_2) < \ell(c_3, \proj_{P_1} c_3)$ and $\ell(\proj_{R(P_0, P_1)} c_3, \proj_{P_1} c_3) \geq 2$ hold. Using Lemma \ref{Lemma: no combinatorial triangle}, we deduce
	\[ \ell(w_{32}) = \ell(c_3, c_2) = \ell(c_3, \proj_{P_0} c_3) < \ell(c_3, \proj_{P_n} c_3) = \ell(c_3, c_1) = \ell(w_{31}). \]
	But this is a contradiction to the assumption $\ell(c_3, c_2) \geq \ell(c_3, c_1)$.
\end{proof}

\section{Groups of Kac-Moody type}\label{Section: Isomorphisms of RGD-systems}

\subsection*{Twin buildings}

Let $\Delta_+ = (\C_+, \delta_+)$ and $\Delta_- = (\C_-, \delta_-)$ be two buildings of the same type $(W, S)$. A \textit{codistance} (or \textit{twinning}) between $\Delta_+$ and $\Delta_-$ is a mapping $\delta_*: (\C_+ \times \C_-) \cup (\C_- \times \C_+) \to W$ satisfying the following axioms, where $\epsilon \in \{+,-\}$, $x\in \Cp$, $y\in \Cm$ and $w = \delta_*(x, y)$:
\begin{enumerate}[label=(Tw\arabic*)]
	\item $\delta_*(y, x) = w^{-1}$;
	
	\item if $z\in \Cm$ is such that $s := \delta_{-\epsilon}(y, z) \in S$ and $\ell(ws) = \ell(w) -1$, then $\delta_*(x, z) = ws$;
	
	\item if $s\in S$, there exists $z\in \C_{-\epsilon}$ such that $\delta_{-\epsilon}(y, z) = s$ and $\delta_*(x, z) = ws$.
\end{enumerate}
A \textit{twin building of type $(W, S)$} is a triple $\Delta = (\Delta_+, \Delta_-, \delta_*)$, where $\Delta_+ = (\C_+, \delta_+)$ and $\Delta_- = (\C_-, \delta_-)$ are buildings of type $(W, S)$ and where $\delta_*$ is a twinning between $\Delta_+$ and $\Delta_-$. Two chambers $c_+ \in \C_+$ and $c_- \in \C_-$ are called \emph{opposite} if $\delta_*(c_+, c_-) = 1_W$. The twin building is called \textit{thick}, if $\Delta_+$ and $\Delta_-$ are thick. A \textit{panel/residue} of a twin building is a panel/residue of one of its two buildings. Two residues $R$ of $\Delta_+$ and $T$ of $\Delta_-$ are called \emph{opposite} if they have the same type and if there exists $r\in R$ and $t\in T$ which are opposite.

Let $\Sigma_+ \subseteq \C_+$ and $\Sigma_- \subseteq \C_-$ be apartments of $\Delta_+$ and $\Delta_-$, respectively. Then the set $\Sigma := \Sigma_+ \cup \Sigma_-$ is called a \textit{twin apartment} if for all $\epsilon \in \{+, -\}$ and $x \in \Sigma_{\epsilon}$ there exists exactly one $y\in \Sigma_{-\epsilon}$ with $\delta_*(x, y) = 1_W$. Moreover, for any $c_+ \in \C_+$ and $c_- \in \C_-$ with $\delta_*(c_+, c_-) = 1_W$ there exists a unique twin apartment containing $c_+$ and $c_-$ (cf.\ \cite[Proposition $5.179(1)$]{AB08}). We denote this unique twin apartment by $\Sigma(c_+, c_-)$. An \emph{automorphism} of $\Delta$ is a bijection $\phi: \C_+ \cup \C_- \to \C_+ \cup \C_-$ which preserves the sign, the distances $\delta_+$ and $\delta_-$ and the codistance $\delta_*$.

Let $\Delta = (\Delta_+, \Delta_-, \delta_*)$ be a twin building of type $(W, S)$. A \emph{twin root of $\Delta$} is the convex hull of a pair of chambers at codistance $1$, i.e.\ a pair $\{x, y\}$ such that $\delta_*(x, y) \in S$. For more information we refer to \cite[Section $5.8.5$]{AB08}. A twin root can be seen as the union of two roots $\alpha_+ \cup \alpha_-$, where $\alpha_{\epsilon} \subseteq \Sigma_{\epsilon}$ is a root in an apartment $\Sigma_{\epsilon}$ for $\epsilon \in \{+, -\}$ and $\Sigma_+ \cup \Sigma_-$ is a twin apartment. Moreover, if we fix a twin apartment $\Sigma$, then the set of twin roots in $\Sigma$ is in one-to-one correspondence with the set of roots in one half of the twin apartment. Let $\Sigma$ be any twin apartment and let $R$ be a spherical residue with $\Sigma \cap R \neq \emptyset$. Then we denote by $\Phi^{\Sigma}$ the set of all twin roots of $\Sigma$ and we let $\Phi^{\Sigma}(R)$ be the set of all twin roots $\beta \in \Phi^{\Sigma}$ such that $R \cap \beta$ and $R \cap (-\beta)$ are both non-empty.

\subsection*{Root group data}

An \emph{RGD-system of type $(W, S)$} is a pair $\mathcal{D} = \left( G, \left( U_{\alpha} \right)_{\alpha \in \Phi}\right)$ consisting of a group $G$ together with a family of subgroups $U_{\alpha}$ (called \emph{root groups}) indexed by the set of roots $\Phi$, which satisfies the following axioms, where $H := \bigcap_{\alpha \in \Phi} N_G(U_{\alpha})$ and $U_{\epsilon} := \langle U_{\alpha} \mid \alpha \in \Phi_{\epsilon} \rangle$ for $\epsilon \in \{+, -\}$:
\begin{enumerate}[label=(RGD\arabic*)] \setcounter{enumi}{-1}
	\item For each $\alpha \in \Phi$, we have $U_{\alpha} \neq \{1\}$.
	
	\item For each prenilpotent pair $\{ \alpha, \beta \} \subseteq \Phi$ with $\alpha \neq \beta$, the commutator group $[U_{\alpha}, U_{\beta}]$ is contained in the group $U_{(\alpha, \beta)} := \langle U_{\gamma} \mid \gamma \in (\alpha, \beta) \rangle$.
	
	\item For each $s\in S$ and each $u\in U_{\alpha_s} \backslash \{1\}$, there exist $u', u'' \in U_{-\alpha_s}$ such that the product $m(u) := u' u u''$ conjugates $U_{\beta}$ onto $U_{s\beta}$ for each $\beta \in \Phi$.
	
	\item For each $s\in S$, the group $U_{-\alpha_s}$ is not contained in $U_+$.
	
	\item $G = H \langle U_{\alpha} \mid \alpha \in \Phi \rangle$.
\end{enumerate}

It is well-known that $\mathcal{D}$ acts on a twin building, which is denoted by $\Delta(\mathcal{D})$ (cf.\ \cite[Section $8.9$]{AB08}). This twin building is a so-called \emph{Moufang twin building} (cf.\ \cite[Section $8.3$]{AB08}). There is a distinguished pair of opposite chambers in $\Delta(\mathcal{D})$ corresponding to $B_{\epsilon} := HU_{\epsilon}$ for $\epsilon \in \{+, -\}$. We refer to this pair as the \emph{fundamental pair of opposite chambers} and we refer to the unique twin apartment containing these two chambers as the \emph{fundamental twin apartment}.

\subsection*{Two results about Moufang buildings}

For more information about \emph{Moufang buildings} we refer to \cite[Section $7.3$]{AB08}.

\begin{lemma}\label{Lemma: no panel is stabilized by all root groups}
	Let $\Delta = (\C, \delta)$ be a Moufang building of rank $2$ and of irreducible and spherical type. Let $\Sigma$ be an apartment and let $c_+, c_- \in \Sigma$ be the two fundamental opposite chambers. Let $(U_{\alpha})_{\alpha \in \Phi^{\Sigma}}$ be the family of corresponding root groups. For $s\in S$ we let $Q_s$ be a panel with $\langle U_{\alpha_s} \cup U_{-\alpha_s} \rangle \leq \Stab(Q_s)$. Then we have $Q_s \neq Q_t$.
	
	In particular, if $Q_s$ and $Q_t$ are parallel, then $Q_s$ and $Q_t$ are opposite.
\end{lemma}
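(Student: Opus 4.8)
The plan is to treat the two assertions separately: the inequality $Q_s \neq Q_t$ reduces to chamber-transitivity of the group generated by all root groups, while the ``in particular'' is a statement about parallel panels in a rank $2$ apartment which I will read off inside a suitable apartment and then transport back to $\Delta$.

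For the first assertion I argue by contradiction: suppose $Q_s = Q_t =: Q$. Then $Q$ is stabilized by $\langle U_{\alpha_s} \cup U_{-\alpha_s} \cup U_{\alpha_t} \cup U_{-\alpha_t}\rangle$. I claim this subgroup already contains every root group $U_\beta$ with $\beta \in \Phi^{\Sigma}$. Indeed, by the Moufang property the elements $m(u)$ for $u \in U_{\alpha_s}\setminus\{1\}$ (resp.\ $u \in U_{\alpha_t}\setminus\{1\}$) lie in $\langle U_{\alpha_s}\cup U_{-\alpha_s}\rangle$ (resp.\ $\langle U_{\alpha_t}\cup U_{-\alpha_t}\rangle$) and conjugate $U_\beta$ to $U_{s\beta}$ (resp.\ $U_{t\beta}$); since every root of the dihedral system $(W,S)$ arises from $\alpha_s$ or $\alpha_t$ by a word in $s$ and $t$, iterated conjugation by these $m(u)$ produces all $U_\beta$. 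Hence $\Stab(Q)$ contains $G^{\dagger} := \langle U_\beta \mid \beta \in \Phi^{\Sigma}\rangle$. As $\Delta$ is a Moufang building of irreducible and spherical type, $G^{\dagger}$ acts transitively on $\C$, and a group acting transitively on $\C$ cannot stabilize the proper non-empty subset $Q$. This contradiction gives $Q_s \neq Q_t$.

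For the second assertion, assume $Q_s$ and $Q_t$ are parallel; by the first part they are distinct. Choose $x \in Q_s$, $y \in Q_t$ and an apartment $\Sigma'$ containing both. By Lemma \ref{Lemma: results about parallel residues}$(c)$ the panels $P := Q_s \cap \Sigma'$ and $Q := Q_t \cap \Sigma'$ are parallel in $\Sigma'$, and they are distinct, since two common chambers would force $Q_s = Q_t$. Applying Lemma \ref{Lemma: CM06Prop2.7} inside the thin apartment $\Sigma' \cong \Sigma(W,S)$, the panels $P$ and $Q$ are stabilized by one and the same reflection $r = r_{\gamma}$. In the dihedral group $W$ a reflection fixes exactly the two panels lying on its wall $\partial\gamma$, and these two panels are opposite in $\Sigma'$; since $P \neq Q$ are both fixed by $r$, they are precisely this pair, so $P$ and $Q$ are opposite in $\Sigma'$.

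It remains to upgrade opposition in $\Sigma'$ to opposition of $Q_s$ and $Q_t$ in $\Delta$, and this is the main obstacle. Here I use that, $Q_s$ and $Q_t$ being parallel, the element $w := \delta(p, \proj_{Q_t} p)$ is independent of $p \in Q_s$, and that its value can be read off in $\Sigma'$: opposition of $P$ and $Q$ gives that the gate $\proj_{Q_t}p$ is the nearer of the two chambers of $Q$, whence $\ell(w) = m_{st}-1$. Then for an arbitrary $p \in Q_s$ and any $q \in Q_t$ with $q \neq \proj_{Q_t} p$ the gate property yields $\delta(p, q) = w\,s'$, where $s'$ is the type of $Q_t$, with $\ell(ws') = \ell(w)+1 = m_{st}$; as $r_S$ is the unique element of this length in $W$, we get $\delta(p, q) = r_S$. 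Thus every chamber of $Q_s$ is opposite some chamber of $Q_t$, i.e.\ $Q_s$ and $Q_t$ are opposite. The delicate point throughout is exactly this passage from ``parallel'' to ``opposite'': identifying the parallel-distance $w$ and turning the apartment-level opposition of $P$ and $Q$ into opposition of the full panels in the building.
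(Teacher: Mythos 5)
Your proof is correct, but both halves take a genuinely different route from the paper's. For the inequality $Q_s \neq Q_t$, the paper stays inside its parallelism toolkit: since $\P_s(c_+)$ is not stabilized by $U_{-\alpha_t}$, a common panel $Q = Q_s = Q_t$ would be distinct from $\P_s(c_+)$, hence parallel to it by Lemma \ref{Lemma: stabilized implies parallel} and therefore opposite to it by \cite[Lemma $18$]{DMVM11}; the same holds for $\P_t(c_+)$, and no panel can be opposite to both. You instead generate the whole little projective group $\langle U_{\beta} \mid \beta \in \Phi^{\Sigma} \rangle$ from the four root groups via the elements $m(u)$ and invoke its chamber-transitivity, so that a stabilized panel would have to be all of $\C$. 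That is a clean global argument, at the cost of importing the (standard, but here uncited) transitivity of the little projective group of a Moufang spherical building. For the ``in particular'', the paper simply cites \cite[Lemma $18$]{DMVM11} (distinct parallel panels of an irreducible spherical rank $2$ building are opposite), whereas you reprove that statement: you pass to an apartment $\Sigma'$, use Lemma \ref{Lemma: results about parallel residues}$(c)$ and Lemma \ref{Lemma: CM06Prop2.7} together with the fact that a wall of a dihedral Coxeter complex consists of exactly two antipodal panels to get opposition of $Q_s \cap \Sigma'$ and $Q_t \cap \Sigma'$, and then transfer this to $\Delta$ via the invariance of $w = \delta(p, \proj_{Q_t} p)$ for parallel panels, the gate property, and the uniqueness of the longest element. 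This is more self-contained but longer; the decisive step, correctly identified and handled in your write-up, is that $\ell(w) = m_{st}-1$ forces $\delta(p,q) = r_S$ for every $q \neq \proj_{Q_t} p$. I see no gaps.
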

\begin{proof}
	We assume by contrary that $Q_s = Q_t$. As $\P_s(c_+)$ is not stabilized by $U_{-\alpha_t}$, we deduce that $Q_s = Q_t$ is different from $\P_s(c_+)$. As $Q_s$ and $\P_s(c_+)$ are parallel by Lemma \ref{Lemma: stabilized implies parallel}, they are opposite by \cite[Lemma $18$]{DMVM11}. Similarly, $Q_t$ and $\P_t(c_+)$ are opposite. But then $Q_s = Q_t$ would be opposite to $\P_s(c_+)$ and to $\P_t(c_+)$, which is a contradiction. Thus we have $Q_s \neq Q_t$. Now if $Q_s$ and $Q_t$ are parallel, it follows from \cite[Lemma $18$]{DMVM11} that $Q_s$ and $Q_t$ are opposite.
\end{proof}

\begin{lemma}\label{Lemma: uniqueness of stabilized panels}
	Let $\Delta = (\C, \delta)$ be a Moufang building of rank $2$ and of irreducible and spherical type. Assume that all panels contain exactly three chambers. Let $\Sigma$ be an apartment and let $c_+, c_- \in \Sigma$ be the two fundamental opposite chambers. Let $(U_{\alpha})_{\alpha \in \Phi^{\Sigma}}$ be the family of corresponding root groups. Then we have
	\[ \{ c\in \C \mid \forall s\in S \, \exists s' \in S: \langle U_{\alpha_s} \cup U_{-\alpha_s} \rangle \leq \Stab( \P_{s'}(c) ) \} = \{ c_+, c_- \}. \]
\end{lemma}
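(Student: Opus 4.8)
The plan is to reduce the statement to an exact computation of the panels fixed by the rank one groups and then to intersect. Write $S=\{s,t\}$, set $m:=m_{st}\geq 3$ (recall the type is irreducible and spherical, so $m\geq 3$), abbreviate $L_s:=\langle U_{\alpha_s}\cup U_{-\alpha_s}\rangle$, and let $\mathcal{A}_s$ denote the set of panels stabilised by $L_s$. For the inclusion $\supseteq$, note that $U_{\alpha_s}$ and $U_{-\alpha_s}$ fix the wall $\partial\alpha_s$, since a root group fixes its root and hence the panels of its wall; thus $\partial\alpha_s\subseteq\mathcal{A}_s$. As $\P_s(c_+)\in\partial\alpha_s$, taking $s'=s$ shows $c_+$ lies in the set, and by the symmetric role of $c_-$ the same holds for $c_-$. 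Everything then comes down to showing $\mathcal{A}_s$ consists of exactly the two wall panels, one through $c_+$ and one through $c_-$.

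First I would record that $L_s$ fixes no chamber of $\P_s(c_+)$: over $\FF_2$ this panel has three chambers, $U_{\alpha_s}$ fixes only $c_+$ there while $U_{-\alpha_s}$ fixes only the chamber of $\P_s(c_+)$ lying in $-\alpha_s$, so no chamber is fixed by both. By Lemma \ref{Lemma: stabilized implies parallel}$(a)$ every $Q\in\mathcal{A}_s$ is therefore parallel to $\P_s(c_+)$, hence equal or opposite to it (parallel panels in a rank $2$ spherical building are opposite, cf.\ \cite[Lemma $18$]{DMVM11}). Writing $P^-$ for the second panel of the wall $\partial\alpha_s$ (which contains $c_-$), the identical argument shows $L_s$ fixes no chamber of $P^-$, so every $Q\in\mathcal{A}_s$ is likewise equal or opposite to $P^-$.

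The main obstacle is to exclude any $Q\in\mathcal{A}_s$ different from $\P_s(c_+)$ and $P^-$; such a $Q$ is opposite to both, so $q_+:=\proj_Q c_+$ and $q_-:=\proj_Q c_-$ satisfy $\ell(c_+,q_+)=\ell(c_-,q_-)=m-1$. Now $U_{\alpha_s}$ fixes $c_+$ and, stabilising $Q$, also fixes $q_+$; since the fixed chamber set of an automorphism is convex, it contains a minimal gallery from $c_+$ to $q_+$. Because $U_{\alpha_s}$ moves both chambers of $\P_s(c_+)$ other than $c_+$, this gallery must leave $c_+$ through $\P_t(c_+)$, and using the Moufang description of $\Fix(U_{\alpha_s})$ (the root $\alpha_s$ together with the third chambers of its interior panels, cf.\ \cite[Section $7.3$]{AB08}) one checks it cannot step off the apartment without getting stuck; hence it runs along $\alpha_s$ and ends at the far chamber of $\alpha_s$, i.e.\ the chamber of $\Sigma$ adjacent to $c_-$ through $P^-$. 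Symmetrically $q_-$ is the far chamber of $-\alpha_s$, namely the chamber of $\Sigma$ adjacent to $c_+$. These two chambers are opposite in $\Sigma$ (at distance $m$), so they cannot both lie in the single panel $Q$; this contradiction yields $\mathcal{A}_s=\{\P_s(c_+),P^-\}$, one panel through $c_+$ and one through $c_-$. (In particular this recovers, and strengthens, the disjointness $\mathcal{A}_s\cap\mathcal{A}_t=\emptyset$ of Lemma \ref{Lemma: no panel is stabilized by all root groups}.)

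It then remains to intersect. If $c$ lies in the set, its $s'$-panel lies in $\mathcal{A}_s$ and its $t'$-panel in $\mathcal{A}_t$, so $c\in(\P_s(c_+)\cup P^-)\cap(\P_t(c_+)\cup Q^-)$, where $Q^-\ni c_-$ is the second wall panel of $\partial\alpha_t$. Two distinct panels through $c_+$ meet only in $c_+$, two distinct panels through $c_-$ meet only in $c_-$, and a panel through $c_+$ is disjoint from a panel through $c_-$ because $\ell(c_+,c_-)=m\geq 3$; moreover $P^-$ and $Q^-$ have different types, so they are distinct. Hence the intersection equals exactly $\{c_+,c_-\}$, which is the desired equality.
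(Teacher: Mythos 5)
Your reduction to the claim that $\mathcal{A}_s$ consists of exactly the two wall panels of $\partial\alpha_s$ is where the argument breaks: that claim is false in general. Take $\Delta$ to be the building of $Sp_4(\FF_2)$ on $V=\langle e_1,f_1\rangle\perp\langle e_2,f_2\rangle$ with $c_+=(\langle e_1\rangle,\langle e_1,e_2\rangle)$ and $c_-=(\langle f_1\rangle,\langle f_1,f_2\rangle)$. For the simple root whose root groups are the transvections $t_{e_2},t_{f_2}$, the group $L=\langle t_{e_2},t_{f_2}\rangle$ fixes precisely the three points of $\langle e_2,f_2\rangle^{\perp}=\langle e_1,f_1\rangle$ and stabilizes no line, so it stabilizes \emph{three} point-panels, namely those of $\langle e_1\rangle$, $\langle f_1\rangle$ and $\langle e_1+f_1\rangle$; only the first two are wall panels of the fundamental apartment. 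The source of the error is your description of $\Fix(U_{\alpha_s})$: already for $B_2(2)$ a root elation $t_v$ fixes all nine flags on the three lines through $v$, not merely the root plus the third chambers of its interior panels, so the minimal gallery from $c_+$ to $q_+=\proj_Q c_+$ \emph{can} step off the apartment and $q_+$ need not be the far chamber of $\alpha_s$. Concretely, for $Q=\P(\langle e_1+f_1\rangle)$ one computes $q_+=(\langle e_1+f_1\rangle,\langle e_2,e_1+f_1\rangle)$ and $q_-=(\langle e_1+f_1\rangle,\langle f_2,e_1+f_1\rangle)$, which are adjacent rather than opposite, so no contradiction arises and $Q$ survives your argument. (Your first two paragraphs are fine: $\supseteq$, the parallelism via Lemma \ref{Lemma: stabilized implies parallel}, and the conclusion that every $Q\in\mathcal{A}_s$ is equal or opposite to each wall panel are all correct; the lemma itself is also still true in this example because the extra panels stabilized by $L_s$ and by $L_t$ carry no common chamber, but that requires an argument you do not supply.)

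This is why the paper does not try to pin down $\mathcal{A}_s$ at all. Instead it shows that any chamber $d\neq c_+$ in the set is opposite to $c_+$ (your Claim-1-type step suffices for this), writes $d=g.c_-$ with $g\in U_+$, and then uses the explicit commutator relations of $A_2(2)$, $B_2(2)$, $G_2(2)$ to force $g=1$. Some case analysis of this kind seems unavoidable, precisely because the ``extra'' stabilized panels genuinely exist in the $B_2$ and $G_2$ cases.
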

\begin{proof}
	One inclusion is obvious. For the other we let $c_+ \neq d\in \C$ be a chamber such that for each $s\in S$ there exists $s' \in S$ with $\langle U_{\alpha_s} \cup U_{-\alpha_s} \rangle \leq \Stab(\P_{s'}(d))$. Note that $s' \neq t'$ for $S = \{s, t\}$ by the previous lemma.
	
	\emph{Claim $1$: $\P_r(c_+) \neq \P_{r'}(d)$ $\Rightarrow$ $\P_r(c_+)$ and $\P_{r'}(d)$ are opposite.}
	
	As both panels are stabilized by $\langle U_{\alpha_r} \cup U_{-\alpha_r} \rangle$ and no chamber in $\P_r(c_+)$ is fixed by $\langle U_{\alpha_r} \cup U_{-\alpha_r} \rangle$, Lemma \ref{Lemma: stabilized implies parallel}$(a)$ yields that the panels $\P_r(c_+)$ and $\P_{r'}(d)$ are parallel. Now \cite[Lemma $18$]{DMVM11} implies that $\P_r(c_+)$ and $\P_{r'}(d)$ are opposite.
	
	\emph{Claim $2$: $c_+$ and $d$ are opposite.}
	
	We first assume $\P_s(c_+) = \P_s(d)$ for some $s\in S$. As $U_{-\alpha_t}$ does not stabilize $\P_s(c_+)$, we infer $s=s'$, $t=t'$ and $\langle U_{\alpha_t} \cup U_{-\alpha_t} \rangle \leq \Stab(\P_t(d))$ for $S = \{s, t\}$. As $c_+ \neq d$, we infer $\P_t(c_+) \neq \P_t(d)$. Now Claim $1$ implies $\delta(c_+, d) \in \{ r_S t, r_S \}$. This is a contradiction to $\P_s(c_+) = \P_s(d)$. Thus we have $\P_s(c_+) \neq \P_s(d)$ for all $s\in S$. In particular, we have $\P_r(c_+) \neq \P_{r'}(d)$ for all $r\in S$ and Claim $1$ yields $\delta(c_+, d) \in \{r_S s', r_S\} \cap \{ r_S t', r_S \} = \{r_S\}$. Hence $c_+$ and $d$ are opposite.
	
	Now there exists $g\in U_+$ with $g.c_- = d$ (cf.\ \cite[Corollary $7.67$]{AB08}). Let $r\in S$ and suppose $r'' \in S$ with $\langle U_{\alpha_r} \cup U_{-\alpha_r} \rangle \leq \Stab(\P_{r''}(c_-))$. Then the subgroup $\langle U_{\alpha_r} \cup U_{-\alpha_r} \rangle$ stabilizes $\P_{r'}(d)$ and $\P_{r''}(c_-)$. If $m_{st} = 3$, then $r \neq r'$ and $r\neq r''$, which implies $r' = r''$. If $m_{st} \neq 3$, then $r' = r = r''$. We infer that $r' = r''$ in any case. We deduce $[g, U_{\pm \alpha_r}]. \P_{r'}(c_-) = \P_{r'}(c_-)$. Note that $[g, U_{\alpha_r}] \in N_r := \langle U_{\alpha} \mid \alpha \in \Phi_+ \backslash \{ \alpha_r \} \rangle$. As no non-trivial element of $N_r$ stabilizes $\P_r(c_-)$, we deduce $[g, U_{\alpha_r}] = 1$. We distinguish the following cases:
	\begin{enumerate}[label=$m_{st}\equal \arabic*$]
		\setcounter{enumi}{2}
		\item As $\langle U_{\alpha_r} \cup U_{-\alpha_r} \rangle$ stabilizes $\P_{r'}(d)$ and $\P_{r''}(c_-)$, they are parallel by Lemma \ref{Lemma: stabilized implies parallel}. As $r' = r''$, they cannot be opposite and \cite[Lemma $18$]{DMVM11} implies $\P_{r'}(d) = \P_{r''}(c_-)$. As this holds for all $r\in S$, we conclude $d=c_-$.
		
		\item As $[g, U_{\alpha_r}] = 1$ holds for both $r\in S$, we deduce from the commutator relations that $g\in U_{t\alpha_s} U_{s\alpha_t}$. Now $[g, U_{-\alpha_r}] \in N_r$ as before and hence $[g, U_{-\alpha_r}] = 1$. We obtain $g=1$ and hence $d = g.c_- = c_-$.
		
		\setcounter{enumi}{5}
		\item Now we use the notation from \cite{TW02}. As $[g, U_{\alpha_6}] = 1$, we deduce from the commutator relations that $g \in U_3 \cdots U_6$. As $[g, U_{\alpha_1}] = 1$, we deduce that $g\in U_1 U_2 U_4$. By \cite[$5.6$]{TW02} the expression is unique and hence $g \in U_4$. As before, we deduce $[g, U_{-6}] = 1$. But then $[u_4, u_{-6}] = [u_2, u_6] = u_4 \neq 1$. This implies $g=1$ and hence $d= = g.c_- = c_-$. \qedhere
	\end{enumerate}
\end{proof}

\subsection*{Maximal finite subgroups}

By a \emph{maximal finite subgroup} $U\leq G$ of a group $G$ we mean a finite subgroup which is not properly contained in any other finite subgroup of $G$.

\begin{theorem}\label{Theorem: Theorem 4.1 and Proposition 4.2 of CM06}
	Suppose that $(W, S)$ is $2$-complete of rank $\geq 3$. Let $\mathcal{D} = \left( G, \left( U_{\alpha} \right)_{\alpha \in \Phi} \right)$ be an RGD-system of type $(W, S)$ such that all root groups are finite and such that $H := \bigcap_{\alpha \in \Phi} N_G(U_{\alpha})$ is finite. Let $\Delta(\mathcal{D}) = (\Delta_+, \Delta_-, \delta_*)$ be the twin building associated with $\mathcal{D}$.
	\begin{enumerate}[label=(\alph*)]
		\item Let $M \leq G$ be a maximal finite subgroup. Then for $\epsilon \in \{+, -\}$ there exists a unique spherical residue $R_{\epsilon}$ of $\Delta_{\epsilon}$ which is stabilized by $M$. The residues $R_+$ and $R_-$ are maximal spherical, and opposite in $\Delta(\mathcal{D})$. In particular, we have $M = \Stab_G(R_+) \cap \Stab_G(R_-)$.
		
		\item Let $R_+ \subseteq \Delta_+$ and $R_- \subseteq \Delta_-$ be maximal spherical residues which are opposite. Then $M := \Stab_G(R_+) \cap \Stab_G(R_-)$ is a maximal finite subgroup.
	\end{enumerate}
\end{theorem}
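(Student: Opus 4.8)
The two assertions are the RGD-theoretic incarnations of \cite[Theorem $4.1$]{CM06} and \cite[Proposition $4.2$]{CM06}, and my plan is to follow that strategy, checking that only the finiteness of the root groups and of $H$, together with $2$-sphericity and rank $\geq 3$, actually enter. Everything rests on two structural inputs. \emph{Fact A}: a finite subgroup $F \leq G$ stabilises a pair of opposite spherical residues of $\Delta(\mathcal{D})$. This is obtained by letting $F$ act on each half (a finite group of type-preserving automorphisms of a building stabilises a spherical residue, cf.\ \cite{AB08}) and then upgrading the two fixed spherical residues to an opposite pair via the twinning, as in \cite[\S 4]{CM06}. \emph{Fact B}: if $R_+ \subseteq \Delta_+$ and $R_- \subseteq \Delta_-$ are opposite spherical residues of type $J$, then, after conjugating to the fundamental pair by strong transitivity, $\Stab_G(R_+) \cap \Stab_G(R_-)$ is the Levi-type subgroup $H \langle U_{\alpha} \mid \alpha \in \Phi^{(J)} \rangle$, where $\Phi^{(J)}$ is the set of roots supported on $J$; since $J$ is spherical this set is finite, and as each $U_{\alpha}$ and $H$ are finite, the group is finite. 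This is precisely the step where our running hypotheses replace the cardinality hypothesis of \cite{CM06}.

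For assertion $(b)$ I would first invoke Fact B to see that $M = \Stab_G(R_+) \cap \Stab_G(R_-)$ is finite, and then observe that $M$, being the full Levi factor, acts transitively on the chambers of the building $R_+$ (resp.\ $R_-$) and hence stabilises no proper residue of it. To prove maximality, suppose $M \leq F$ with $F$ finite. By Fact A, $F$ — and hence $M$ — stabilises some opposite spherical pair $(R_+', R_-')$. Because $M$ stabilises no proper residue of $R_+$, the residue $\proj_{R_+} R_+'$, which is $M$-stable by Lemma \ref{Lemma: stabilized implies parallel} and the projection formalism of Lemmas \ref{Lemma: results about parallel residues} and \ref{Lemma: triangle projection}, must be all of $R_+$; combined with the maximality of $R_+$ among spherical residues this forces $R_+' = R_+$ and symmetrically $R_-' = R_-$. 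Thus $F \leq \Stab_G(R_+) \cap \Stab_G(R_-) = M$, so $M$ is maximal finite.

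For assertion $(a)$, given a maximal finite $M$, Fact A yields an opposite spherical pair $(R_+, R_-)$ stabilised by $M$, which I enlarge so that $R_+$ (equivalently $R_-$) is maximal spherical. By Fact B the group $\Stab_G(R_+) \cap \Stab_G(R_-)$ is finite and contains $M$, so maximality of $M$ gives $M = \Stab_G(R_+) \cap \Stab_G(R_-)$; in particular $R_+$ and $R_-$ are opposite and maximal spherical. For the uniqueness of $R_{\epsilon}$ in each half I again use that $M$ is the full Levi and therefore acts on $R_{\epsilon}$ without stabilising any proper residue: if $M$ stabilised a further spherical residue $T_{\epsilon}$ of $\Delta_{\epsilon}$, then $M$ would stabilise both $\proj_{R_{\epsilon}} T_{\epsilon}$ and $\proj_{T_{\epsilon}} R_{\epsilon}$ (Lemma \ref{Lemma: residues and apartments}, Lemma \ref{Lemma: triangle projection}), whence $\proj_{R_{\epsilon}} T_{\epsilon} = R_{\epsilon}$, and the parallel-residue analysis of Lemma \ref{Lemma: results about parallel residues} together with maximality of $R_{\epsilon}$ forces $T_{\epsilon} = R_{\epsilon}$.

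The main obstacle is Fact A, namely passing from a spherical residue fixed in each half to a pair of \emph{opposite} spherical residues fixed simultaneously, and controlling this under enlargement to the maximal spherical case; this is the genuine substance of \cite[\S 4]{CM06}. By contrast, the finiteness in Fact B and the resulting order-reversing, Galois-type correspondence between maximal finite subgroups and opposite maximal-spherical pairs are comparatively formal, the only point requiring attention being to record that the field-cardinality assumption of \cite{CM06} is nowhere used, the finiteness of the root groups and of $H$ sufficing throughout.
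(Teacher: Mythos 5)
The paper does not actually prove this statement: its ``proof'' is the single line ``This follows from \cite{CM06}, Corollary 3.8, Theorem 4.1, Proposition 4.2''. Your reconstruction is therefore being measured against the strategy of the cited source rather than against anything in the paper, and your overall architecture (a fixed-point result for finite subgroups producing an opposite spherical pair, identification of the stabilizer of such a pair with a finite Levi, and the resulting Galois-type correspondence) is indeed that strategy; like the paper, you defer the genuinely hard part (your Fact A) to \cite{CM06}, which is consistent with what the author does.

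Two of the steps you do spell out, however, do not hold up as written, though both are repairable. First, in Fact B you conclude that $H\langle U_{\alpha} \mid \alpha \in \Phi^{(J)}\rangle$ is finite because the index set is finite and each $U_{\alpha}$ and $H$ are finite; a group generated by finitely many finite subgroups need not be finite. What one actually needs is the Bruhat/Birkhoff decomposition of the spherical Levi, or equivalently that this group acts on the finite twin residue $(R_+, R_-)$ with kernel contained in the finite group $H$ (cf.\ the use of \cite{AB08}, Prop.\ 8.82 in the proof of Lemma \ref{Lemma: Remy03 Section 6.2.2}). Second, in both (a) and (b) you pass from $\proj_{R_{\epsilon}} T_{\epsilon} = R_{\epsilon}$ to $T_{\epsilon} = R_{\epsilon}$ by invoking ``maximality'' together with Lemma \ref{Lemma: results about parallel residues}; but parallelism plus maximality of rank does not exclude two \emph{distinct} parallel maximal spherical residues (opposite rank-$2$ residues inside a spherical rank-$3$ residue are parallel and distinct). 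The step is closed by Lemma \ref{Lemma: 2-dimensional rank 2 residues not parallel} (transferred to $\Delta_{\epsilon}$ via Lemma \ref{Lemma: results about parallel residues}$(c)$): distinct spherical rank-$2$ residues are never parallel, under the hypothesis that $\langle J\rangle$ is infinite for every $J$ with $\vert J\vert \geq 3$. That hypothesis is precisely what $2$-completeness and rank $\geq 3$ buy you --- all labels at least $3$ force every rank-$3$ standard parabolic to be affine or hyperbolic, hence infinite, so maximal spherical residues have rank exactly $2$ --- which also shows that your closing claim that only $2$-sphericity is used is not quite accurate: $2$-completeness enters here in an essential way.
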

\begin{proof}
	This follows from \cite[Corollary $3.8$, Theorem 4.1, Proposition 4.2]{CM06}.
\end{proof}

Let $\mathcal{D} = \left( G, \left( U_{\alpha} \right)_{\alpha \in \Phi} \right)$ be an RGD-system of type $(W, S)$. Let $\Sigma$ be a twin apartment of $\Delta(\mathcal{D})$ and let $R$ be a spherical residue of $\Delta(\mathcal{D})$ with $\Sigma \cap R \neq \emptyset$. Then we define
\[ L^{\Sigma}(R) := \Fix_G(\Sigma) . \langle U_{\alpha} \mid \alpha \in \Phi^{\Sigma}(R) \rangle. \]

\begin{lemma}\label{Lemma: Remy03 Section 6.2.2}
	Let $\mathcal{D} = \left( G, \left( U_{\alpha} \right)_{\alpha \in \Phi} \right)$ be an RGD-system of type $(W, S)$ and let $\Delta(\mathcal{D}) = (\Delta_+, \Delta_-, \delta_*)$ be the associated twin building. Let $R_+ \subseteq \Delta_+$ and $R_- \subseteq \Delta_-$ be opposite spherical residues and let $\Sigma$ be a twin apartment with $\Sigma \cap R_+ \neq \emptyset \neq \Sigma \cap R_-$. Then we have
	\[ L^{\Sigma}(R_+) = \Stab_G(R_+) \cap \Stab_G(R_-). \]
\end{lemma}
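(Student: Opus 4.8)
The plan is to prove the two inclusions separately. The inclusion $L^{\Sigma}(R_+) \subseteq \Stab_G(R_+) \cap \Stab_G(R_-)$ is routine, while the reverse inclusion carries the real content. Throughout write $M := \Stab_G(R_+) \cap \Stab_G(R_-)$ and $L^+ := \langle U_{\alpha} \mid \alpha \in \Phi^{\Sigma}(R_+) \rangle$, and fix opposite chambers $c_+ \in \Sigma \cap R_+$ and $c_- \in \Sigma \cap R_-$, so that $\Sigma = \Sigma(c_+, c_-)$.

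\emph{The inclusion ``$\subseteq$''.} First I would record that the twin roots of $\Sigma$ whose wall cuts $R_+$ are exactly those whose wall cuts $R_-$, i.e.\ $\Phi^{\Sigma}(R_+) = \Phi^{\Sigma}(R_-)$. This is because $\Sigma \cap R_+$ and $\Sigma \cap R_-$ are opposite residues of the twin apartment $\Sigma$, and opposition preserves the set of walls meeting a spherical residue. Granting this, each $U_{\alpha}$ with $\alpha \in \Phi^{\Sigma}(R_+)$ fixes the twin root $\alpha = \alpha_+ \cup \alpha_-$ pointwise (the defining property of root groups of the Moufang twin building $\Delta(\mathcal{D})$); since $\alpha_+$ meets $\Sigma \cap R_+$ and $\alpha_-$ meets $\Sigma \cap R_-$, the group $U_{\alpha}$ fixes a chamber of $R_+$ and a chamber of $R_-$, hence stabilizes both residues. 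As $\Fix_G(\Sigma)$ fixes chambers of $\Sigma \cap R_\pm$ it also stabilizes $R_+$ and $R_-$. Therefore $L^{\Sigma}(R_+) \subseteq M$.

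\emph{The inclusion ``$\supseteq$''.} Let $g \in M$. The pair $(R_+, R_-)$ with the induced codistance is a Moufang twin building of spherical type $(\langle J \rangle, J)$, and its root groups are precisely the $U_{\alpha}$ with $\alpha \in \Phi^{\Sigma}(R_+)$; in particular $L^+$ acts on both $R_+$ and $R_-$. The key step is that $L^+$ acts transitively on the pairs of opposite chambers $(x,y) \in R_+ \times R_-$. I would obtain this in two moves from the Moufang structure: $L^+$ is chamber transitive on $R_+$, and the stabilizer of $c_+$ in $L^+$ is transitive on the chambers of $R_-$ opposite to $c_+$. Since $g$ preserves the codistance, $(g.c_+, g.c_-)$ is again such an opposite pair, so there exists $u \in L^+$ with $u.(g.c_+) = c_+$ and $u.(g.c_-) = c_-$.

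\emph{Conclusion.} The element $h := ug$ then lies in $\Stab_G(c_+) \cap \Stab_G(c_-)$. For a pair of opposite chambers this stabilizer equals the pointwise fixator of the twin apartment they determine: for the fundamental pair it is $B_+ \cap B_- = H = \Fix_G(\Sigma_0)$ by the RGD axioms, and the general case follows by transitivity of $G$ on opposite pairs (equivalently on twin apartments) together with $\Fix_G(g.\Sigma_0) = g H g^{-1}$. Hence $h \in \Fix_G(\Sigma)$ and $g = u^{-1} h \in L^+ \cdot \Fix_G(\Sigma)$. Finally, $\Fix_G(\Sigma)$ is a conjugate of the torus $H = \bigcap_{\alpha} N_G(U_{\alpha})$, so it normalizes every $U_{\alpha}$ with $\alpha \in \Phi^{\Sigma}$; thus $L^{\Sigma}(R_+) = \Fix_G(\Sigma) L^+ = L^+ \Fix_G(\Sigma)$ is a subgroup and $g \in L^{\Sigma}(R_+)$, completing the proof. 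The main obstacle I anticipate is the transitivity of $L^+$ on opposite pairs inside the spherical twin residue $(R_+, R_-)$: it requires invoking the Moufang structure of a residue of a Moufang twin building and the identification of its root groups with the $U_{\alpha}$, $\alpha \in \Phi^{\Sigma}(R_+)$. The identification $\Stab_G(c_+) \cap \Stab_G(c_-) = \Fix_G(\Sigma)$ is the other point where the RGD axioms (specifically $B_+ \cap B_- = H$) are indispensable.
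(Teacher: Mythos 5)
Your proof is correct and follows essentially the same route as the paper: both reduce the nontrivial inclusion to the strong transitivity of $\langle U_{\alpha} \mid \alpha \in \Phi^{\Sigma}(R_+) \rangle$ on the Moufang twin residue $(R_+, R_-, \delta_*)$ (the paper cites \cite[Proposition $8.19$]{AB08} for this), which produces an element moving $(g.c_+, g.c_-)$ back to $(c_+, c_-)$ and hence an element of $\Fix_G(\Sigma)$. You merely spell out two steps the paper leaves implicit, namely the easy inclusion and the identification $\Stab_G(c_+) \cap \Stab_G(c_-) = \Fix_G(\Sigma)$.
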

\begin{proof}
	One inclusion is obvious. For the other we let $g\in \Stab_G(R_+) \cap \Stab_G(R_-)$. Note that with the restriction of the codistance, $(R_+, R_-, \delta_*)$ is a Moufang twin building and $\langle U_{\alpha} \mid \alpha \in \Phi^{\Sigma}(R_+) \rangle$ acts strongly transitively on $(R_+, R_-, \delta_*)$ by \cite[Proposition $8.19$]{AB08}. But this implies that there exists $h\in \langle U_{\alpha} \mid \alpha \in \Phi^{\Sigma}(R_+) \rangle$ with $gh \in \Fix_G(\Sigma)$. This finishes the proof.
\end{proof}

\subsection*{Isomorphisms of (irreducible) RGD-systems}

\begin{definition}
	Suppose that $(W, S)$ and $(W', S')$ are irreducible Coxeter systems of finite rank. Let $\mathcal{D} = \left( G, \left( U_{\alpha} \right)_{\alpha \in \Phi} \right)$ be an RGD-system of type $(W, S)$ and let $\mathcal{D}' = \left( G', \left( U_{\alpha}' \right)_{\alpha \in \Phi'} \right)$ be an RGD-system of type $(W', S')$. Then $\mathcal{D}$ and $\mathcal{D'}$ are called \emph{isomorphic} if there exist an isomorphism $\phi: G \to G'$, an isomorphism $\pi: W \to W'$ with $\pi(S) = S'$, an element $x\in G'$ and a sign $\epsilon \in \{+, -\}$ such that
	\[ \phi\left( U_{\alpha} \right) = x U_{\epsilon \pi(\alpha)}' x^{-1} \]
	for each $\alpha \in \Phi$. If $\phi$ is as above, then we say that $\phi$ \emph{induces an isomorphism} of $\mathcal{D}$ to $\mathcal{D}'$.
\end{definition}

\begin{theorem}\label{Theorem: Theorem 2.2 in CM05b}
	Let $\mathcal{D} = \left( G, \left( U_{\alpha} \right)_{\alpha \in \Phi} \right)$ be an RGD-system of irreducible type $(W, S)$ and let $\mathcal{D}' = \left( G', \left( U_{\alpha}' \right)_{\alpha \in \Phi'} \right)$ be an RGD-system of irreducible type $(W', S')$. Suppose that $S$ and $S'$ are finite. If $\phi: G \to G'$ is an isomorphism and if there exists $x\in G'$ with
	\[ \left\{ \phi(U_{\alpha}) \mid \alpha \in \Phi(W, S) \right\} = \left\{ x U_{\alpha}' x^{-1} \mid \alpha \in \Phi(W', S') \right\}, \]
	then $\phi$ induces an isomorphism of $\mathcal{D}$ to $\mathcal{D}'$.
\end{theorem}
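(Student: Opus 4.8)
This statement is precisely the reduction theorem of Caprace and M\"uhlherr, so the plan is simply to invoke \cite[Theorem $2.2$]{CM05b}; the substance of the argument lies there and I would not reprove it. For completeness let me indicate the shape the proof takes. The hypothesis asserts only that $\phi$ carries the \emph{set} $\{ U_{\alpha} \mid \alpha \in \Phi \}$ onto the set $\{ x U_{\alpha}' x^{-1} \mid \alpha \in \Phi' \}$; after composing $\phi$ with the inner automorphism $g \mapsto x^{-1} g x$ of $G'$ we may assume $x = 1$, so that $\phi(U_{\alpha}) = U_{\sigma(\alpha)}'$ for a well-defined bijection $\sigma: \Phi \to \Phi'$ (well-defined since the root groups are pairwise distinct by the RGD axioms). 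The goal is then to show that $\sigma$ has the form $\alpha \mapsto \epsilon \pi(\alpha)$ for some isomorphism $\pi: W \to W'$ with $\pi(S) = S'$ and a single sign $\epsilon \in \{ +, - \}$.

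The key point is that the combinatorial relations among roots are encoded purely group-theoretically by their root groups, hence are transported by $\phi$. First one recovers the opposition relation $\alpha \leftrightarrow -\alpha$: among all pairs, $\{ \alpha, -\alpha \}$ is distinguished as the pair which is not prenilpotent and yet for which $\langle U_{\alpha}, U_{-\alpha} \rangle$ contains the ``Weyl elements'' $m(u)$ of axiom (RGD$2$) inducing the reflection $r_{\alpha}$. Next, for $\alpha \neq \pm \beta$ the prenilpotency of $\{ \alpha, \beta \}$ and the interval $[\alpha, \beta]$ are read off from the commutator relation (RGD$1$), namely $[U_{\alpha}, U_{\beta}] \subseteq U_{(\alpha, \beta)}$; analysing these rank-$2$ configurations one reconstructs each order $m_{st}$ and thus the whole Coxeter matrix. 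Since $\phi$ preserves all of this data, $\sigma$ preserves opposition, prenilpotency and the rank-$2$ orders, so it is an isomorphism of the two root systems in the combinatorial sense.

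Such a combinatorial isomorphism is exactly what an isomorphism $\pi$ of Coxeter systems with $\pi(S) = S'$ induces, up to a global sign: the mere set of root groups does not distinguish $\Phi_+$ from $\Phi_-$ (there is no pinning built into the hypothesis, and the sign automorphism preserves the set), so $\sigma$ may interchange positive and negative roots, and this ambiguity is absorbed into $\epsilon$. Reinstating the conjugation by $x$ then gives $\phi(U_{\alpha}) = x U_{\epsilon \pi(\alpha)}' x^{-1}$, which is the assertion that $\phi$ induces an isomorphism of $\mathcal{D}$ to $\mathcal{D}'$. I expect the main obstacle to be the group-theoretic characterisation of opposition and of the rank-$2$ orders used in the second paragraph: it requires a uniform study of the subgroups $\langle U_{\alpha}, U_{\beta} \rangle$ across all configurations, which is the genuinely technical heart of \cite{CM05b}.
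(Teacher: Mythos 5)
Your proposal takes exactly the same route as the paper: the result is quoted verbatim as \cite[Theorem $2.2$]{CM05b}, and the paper's proof consists of nothing more than that citation. Your additional sketch of how the argument in \cite{CM05b} goes is a reasonable outline but is not part of what the paper supplies, so citing the reference is all that is required here.
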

\begin{proof}
	This is \cite[Theorem 2.2]{CM05b}.
\end{proof}

\begin{definition}
	Let $\mathcal{D} = \left( G, \left( U_{\alpha} \right)_{\alpha \in \Phi} \right)$ be an RGD-system of type $(W, S)$. We define $G^{\mathcal{D}} := G$. We say that $\mathcal{D}$ has \emph{trivial torus} if $H = \bigcap_{\alpha \in \Phi} N_G(U_{\alpha}) = \{1\}$. Moreover, $\mathcal{D}$ is called \emph{centered} if $G = \langle U_{\alpha} \mid \alpha \in \Phi \rangle$; it is called \emph{over} $\FF_2$ if all root groups have cardinality $2$.
\end{definition}

\begin{remark}
	Note that a centered RGD-system over $\FF_2$ has trivial torus.
\end{remark}

\section{The Main result}\label{Section: Main result}

In this section we will solve the isomorphism problem for RGD-systems over $\FF_2$. We will follows the strategy used in \cite[Theorem $5.1$]{CM06}.

\begin{convention}
	In this section we let $(W, S)$ and $(W', S')$ be two $2$-complete and $\tilde{A}_2$-free Coxeter systems of finite rank at least $3$, $\mathcal{D}$ and $\mathcal{D}'$ be two centered RGD-systems over $\FF_2$ of type $(W, S)$ and $(W', S')$, respectively, and $\phi: G^{\mathcal{D}} \to G^{\mathcal{D}'}$ be an isomorphism. Then $\mathcal{D}$ and $\mathcal{D}'$ have trivial tori.
	
	We let $\Delta$ (resp.\ $\Delta'$) be the twin building associated with $\mathcal{D}$ (resp.\ $\mathcal{D}'$). We denote the root groups of $\Delta$ by $U_{\alpha}$ and the root groups of $\Delta'$ by $U_{\alpha}'$. Let $\Sigma$ be the fundamental twin apartment of $\Delta$ and let $c_{\epsilon} \in \C_{\epsilon}$ be the fundamental pair of opposite chambers of $\Delta$ for $\epsilon \in \{+, -\}$. For $s\neq t \in S$ we define $M_{st} := L^{\Sigma}(R_{\{s, t\}}(c_+)) = \langle U_{\pm \alpha_s} \cup U_{\pm \alpha_t} \rangle = \Stab_{G^{\mathcal{D}}}(R_{\{s, t\}}(c_+)) \cap \Stab_{G^{\mathcal{D}}}(R_{\{s, t\}}(c_-))$ (cf.\ Lemma \ref{Lemma: Remy03 Section 6.2.2}).
\end{convention}

\begin{lemma}\label{Lemma: phi(Mst) stabilizer of opposite residues}
	Let $s \neq t \in S$ and $\epsilon \in \{+, -\}$. Then there exists a unique spherical residue $R_{\epsilon}^{\{s, t\}}$ of $\Delta_{\epsilon}'$ which is stabilized by $\phi( M_{st} )$. Moreover, $R_+^{\{s, t\}}$ and $R_-^{\{s, t\}}$ are opposite and of rank $2$, and $\phi(M_{st}) = \Stab_{G^{\mathcal{D}'}}( R_+^{\{s, t\}} ) \cap \Stab_{G^{\mathcal{D}'}}( R_-^{\{s, t\}} )$.
\end{lemma}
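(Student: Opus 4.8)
The strategy is to recognize $M_{st}$ as a maximal finite subgroup of $G^{\mathcal{D}}$ and then transport this property across the isomorphism $\phi$. Since $\phi$ is an isomorphism of abstract groups, it carries maximal finite subgroups to maximal finite subgroups, so $\phi(M_{st})$ is a maximal finite subgroup of $G^{\mathcal{D}'}$. The conclusion will then follow by applying Theorem \ref{Theorem: Theorem 4.1 and Proposition 4.2 of CM06}(a) to $G^{\mathcal{D}'}$ and the maximal finite subgroup $\phi(M_{st})$.

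\emph{First step: $M_{st}$ is finite.}
By the Convention of this section, $M_{st} = \langle U_{\pm\alpha_s} \cup U_{\pm\alpha_t}\rangle$, which equals $L^{\Sigma}(R_{\{s,t\}}(c_+))$ and, by Lemma \ref{Lemma: Remy03 Section 6.2.2}, equals $\Stab_{G^{\mathcal{D}}}(R_{\{s,t\}}(c_+)) \cap \Stab_{G^{\mathcal{D}}}(R_{\{s,t\}}(c_-))$. Because $(W,S)$ is $2$-complete, the residue $R_{\{s,t\}}(c_+)$ is spherical, and by Lemma \ref{Lemma: Remy03 Section 6.2.2} the group $M_{st}$ acts on the Moufang twin building $(R_{\{s,t\}}(c_+), R_{\{s,t\}}(c_-), \delta_*)$ of spherical type with finite root groups; since $\mathcal{D}$ is over $\FF_2$, all root groups are finite, and the torus $H$ is trivial, so $M_{st}$ is finite.

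\emph{Second step: $M_{st}$ is maximal finite.}
Here I would invoke Theorem \ref{Theorem: Theorem 4.1 and Proposition 4.2 of CM06}(b): the residues $R_{\{s,t\}}(c_+)$ and $R_{\{s,t\}}(c_-)$ are opposite in $\Delta$ and of rank $2$. To apply part (b) directly one needs these residues to be \emph{maximal} spherical. This is exactly the point where $2$-completeness is essential: since the Coxeter diagram is the complete graph and $\langle J\rangle$ is infinite for $|J|\geq 3$, the maximal spherical subsets of $S$ are precisely the two-element subsets $\{s,t\}$. Hence $R_{\{s,t\}}(c_\pm)$ are maximal spherical and opposite, so Theorem \ref{Theorem: Theorem 4.1 and Proposition 4.2 of CM06}(b) shows that $M_{st} = \Stab_{G^{\mathcal{D}}}(R_{\{s,t\}}(c_+)) \cap \Stab_{G^{\mathcal{D}}}(R_{\{s,t\}}(c_-))$ is a maximal finite subgroup of $G^{\mathcal{D}}$.

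\emph{Third step: transport and conclude.}
Since $\phi$ is a group isomorphism, $\phi(M_{st})$ is a maximal finite subgroup of $G^{\mathcal{D}'}$. The RGD-system $\mathcal{D}'$ is centered over $\FF_2$, hence has trivial (in particular finite) torus and finite root groups, and $(W', S')$ is $2$-complete of rank at least $3$, so the hypotheses of Theorem \ref{Theorem: Theorem 4.1 and Proposition 4.2 of CM06} are met for $G^{\mathcal{D}'}$. Applying part (a) to the maximal finite subgroup $\phi(M_{st})$ yields, for each $\epsilon \in \{+,-\}$, a unique spherical residue $R_{\epsilon}^{\{s,t\}}$ of $\Delta'_{\epsilon}$ stabilized by $\phi(M_{st})$; moreover $R_+^{\{s,t\}}$ and $R_-^{\{s,t\}}$ are maximal spherical and opposite, and $\phi(M_{st}) = \Stab_{G^{\mathcal{D}'}}(R_+^{\{s,t\}}) \cap \Stab_{G^{\mathcal{D}'}}(R_-^{\{s,t\}})$. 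Finally, maximal spherical residues in a $2$-complete type of rank at least $3$ have rank exactly $2$ (again because the diagram is complete and triples generate infinite subgroups), which gives the rank-$2$ claim. The main obstacle is the verification that $M_{st}$ is genuinely maximal finite rather than merely finite; everything hinges on identifying the rank-$2$ residues $R_{\{s,t\}}(c_\pm)$ as \emph{maximal} spherical, which is where the $2$-completeness hypothesis does the decisive work.
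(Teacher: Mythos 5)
Your proposal is correct and follows essentially the same route as the paper: identify $M_{st}$ as $\Stab_{G^{\mathcal{D}}}(R_{\{s,t\}}(c_+)) \cap \Stab_{G^{\mathcal{D}}}(R_{\{s,t\}}(c_-))$ for a pair of opposite maximal spherical residues, apply Theorem \ref{Theorem: Theorem 4.1 and Proposition 4.2 of CM06}(b) to see it is maximal finite, transport across $\phi$, and conclude with part (a). The paper's proof is just a terser version of yours; your extra verifications (that two-element subsets are the maximal spherical ones under $2$-completeness, and the finiteness discussion, which is in any case part of the conclusion of part (b)) are correct but implicit in the paper.
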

\begin{proof}
	$M_{st}$ is a maximal finite subgroup by Theorem \ref{Theorem: Theorem 4.1 and Proposition 4.2 of CM06}$(b)$. Thus $\phi(M_{st})$ is a maximal finite subgroup of $G^{\mathcal{D}'}$. By Theorem \ref{Theorem: Theorem 4.1 and Proposition 4.2 of CM06}$(a)$, there exist unique spherical residues $R_+^{\{s, t\}}$ of $\Delta_+'$ and $R_-^{\{s, t\}}$ of $\Delta_-'$ which are stabilized by $\phi(M_{st})$. In particular, $\phi(M_{st}) = \Stab_{G^{\mathcal{D}'}}( R_+^{\{s, t\}} ) \cap \Stab_{G^{\mathcal{D}'}}( R_-^{\{s, t\}} )$ and the residues $R_+^{\{s, t\}}$ and $R_-^{\{s, t\}}$ are opposite in $\Delta'$ and maximal spherical, i.e.\ of rank $2$.
\end{proof}

\begin{lemma}\label{Lemma: different residues}
	For $r, s, t \in S$ pairwise distinct and $\epsilon \in \{+, -\}$ we have $R_{\epsilon}^{\{r, s\}} \neq R_{\epsilon}^{\{s, t\}}$.
\end{lemma}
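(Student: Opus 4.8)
The plan is to argue by contradiction, leveraging the structural information that each $\phi(M_{st})$ carries as the stabilizer of a pair of opposite spherical residues. Suppose for some pairwise distinct $r, s, t \in S$ and some $\epsilon$ we had $R_{\epsilon}^{\{r, s\}} = R_{\epsilon}^{\{s, t\}} =: R$. First I would translate this into a statement about subgroups of $G^{\mathcal{D}'}$. By Lemma \ref{Lemma: phi(Mst) stabilizer of opposite residues} we have $\phi(M_{rs}) = \Stab(R_+^{\{r,s\}}) \cap \Stab(R_-^{\{r,s\}})$ and similarly for $\{s,t\}$; since the residue coinciding on sign $\epsilon$ forces both $\phi(M_{rs})$ and $\phi(M_{st})$ to stabilize the same rank-$2$ residue $R$ of $\Delta_{\epsilon}'$, I would consider the subgroup generated by them and examine what it stabilizes.

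The key observation I expect to exploit is that $M_{rs}$ and $M_{st}$ share the common root groups $U_{\pm \alpha_s}$; indeed $M_{rs} = \langle U_{\pm\alpha_r} \cup U_{\pm\alpha_s}\rangle$ and $M_{st} = \langle U_{\pm\alpha_s} \cup U_{\pm\alpha_t}\rangle$, so their intersection contains $\langle U_{\pm\alpha_s}\rangle$ and the join $\langle M_{rs}, M_{st}\rangle$ equals $\langle U_{\pm\alpha_r} \cup U_{\pm\alpha_s} \cup U_{\pm\alpha_t}\rangle = M_{rst} := L^{\Sigma}(R_{\{r,s,t\}}(c_+))$, the stabilizer of a pair of opposite rank-$3$ residues. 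The crucial point is that since $(W,S)$ has rank at least $3$ and is $2$-spherical only for pairs, a rank-$3$ subset $\{r,s,t\}$ need not be spherical, so $M_{rst}$ is typically infinite. I would therefore derive a contradiction from the fact that $\langle \phi(M_{rs}), \phi(M_{st})\rangle = \phi(M_{rst})$ must stabilize $R$ (being generated by two subgroups that both stabilize $R$), forcing $\phi(M_{rst}) \leq \Stab(R)$; but then $M_{rst} \leq \phi^{-1}(\Stab(R))$, and I would argue this makes $M_{rst}$ stabilize a single spherical residue on each side in a way incompatible with its structure.

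Alternatively, and more robustly, I would work with the uniqueness clause of Theorem \ref{Theorem: Theorem 4.1 and Proposition 4.2 of CM06}. Since $\phi(M_{rs})$ is a maximal finite subgroup, the residue $R_+^{\{r,s\}}$ is the \emph{unique} spherical residue of $\Delta_+'$ stabilized by it, and it is \emph{maximal} spherical of rank $2$; likewise for $R_+^{\{s,t\}}$. If $R_{\epsilon}^{\{r,s\}} = R_{\epsilon}^{\{s,t\}}$ on one side, I would use that the opposite-residue relation pins down the other side as well (via $M = \Stab(R_+) \cap \Stab(R_-)$ and the bijection between opposite pairs), concluding that $\phi(M_{rs})$ and $\phi(M_{st})$ stabilize the same pair of opposite rank-$2$ residues and hence, by the final identity in Lemma \ref{Lemma: phi(Mst) stabilizer of opposite residues}, that $\phi(M_{rs}) = \phi(M_{st})$. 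Applying $\phi^{-1}$ gives $M_{rs} = M_{st}$, whence $\langle U_{\pm\alpha_r}\rangle \leq M_{rs} = M_{st} = \langle U_{\pm\alpha_s} \cup U_{\pm\alpha_t}\rangle$, and symmetrically $\langle U_{\pm\alpha_t}\rangle \leq M_{rs}$. This collapses three genuinely distinct root groups into a single rank-$2$ configuration, contradicting the $\FF_2$ RGD-structure (for instance via Lemma \ref{Lemma: no panel is stabilized by all root groups} or the faithful action on $\Delta$, where $\P_r(c_+)$, $\P_s(c_+)$, $\P_t(c_+)$ are pairwise distinct panels and no rank-$2$ Levi can contain the three reflections $m(u)$ for $r,s,t$).

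The main obstacle I anticipate is handling the \textbf{one-sided} nature of the hypothesis correctly: the assumption only gives equality on the sign $\epsilon$, so I must be careful in propagating it to conclude $\phi(M_{rs}) = \phi(M_{st})$ as full subgroups rather than merely as stabilizers of one residue. The cleanest route is to invoke that a maximal finite subgroup determines \emph{both} opposite residues uniquely (Theorem \ref{Theorem: Theorem 4.1 and Proposition 4.2 of CM06}$(a)$), so that agreement of the $\epsilon$-side residue between two maximal finite subgroups is not automatically enough — rather, I would observe that $R_{\epsilon}^{\{r,s\}} = R_{\epsilon}^{\{s,t\}}$ implies $\phi(M_{rs})$ and $\phi(M_{st})$ both stabilize this common residue, hence so does $\langle \phi(M_{rs}), \phi(M_{st})\rangle = \phi(M_{rst})$, and then use the \emph{infiniteness} of $M_{rst}$ (guaranteed because $\{r,s,t\}$ is a rank-$3$ subset of a $2$-complete but generically non-spherical Coxeter system) against the fact that the stabilizer of a \emph{spherical} residue contains $\langle U_{\pm\alpha_r}, U_{\pm\alpha_s}, U_{\pm\alpha_t}\rangle$ only if that residue were of rank $\geq 3$ and spherical — which need not exist. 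This infiniteness-versus-spherical tension is where the real content lies, and I would make it precise using the decomposition $L^{\Sigma}(R) = \Fix_G(\Sigma).\langle U_{\alpha} \mid \alpha \in \Phi^{\Sigma}(R)\rangle$ from Lemma \ref{Lemma: Remy03 Section 6.2.2}.
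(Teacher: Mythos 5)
There is a genuine gap, and it sits exactly at the point you yourself flag as ``the main obstacle''. Your ``more robust'' route needs $R_{-\epsilon}^{\{r,s\}} = R_{-\epsilon}^{\{s,t\}}$ in order to conclude $\phi(M_{rs}) = \phi(M_{st})$, but there is no ``bijection between opposite pairs'': a spherical residue of $\Delta_{\epsilon}'$ has many opposites in $\Delta_{-\epsilon}'$, so equality of the $\epsilon$-side residues gives no control over the $-\epsilon$-side ones. Your fallback --- that $\phi(M_{rst})=\langle\phi(M_{rs}),\phi(M_{st})\rangle$ is infinite yet stabilizes the common rank-$2$ residue $R$ --- is not a contradiction either: the stabilizer of a \emph{single} spherical residue in one half of the twin building is a parabolic-type subgroup and is itself infinite (it contains, for instance, the full stabilizer of any chamber of $R$, hence a conjugate of $U_+'$, which is infinite since $W'$ is infinite). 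Finiteness of such a stabilizer only comes from fixing a spherical residue in \emph{each} half simultaneously (Theorem \ref{Theorem: Theorem 4.1 and Proposition 4.2 of CM06}), which is precisely the two-sided information the hypothesis does not supply; likewise $\phi^{-1}(\Stab(R))$ is not the stabilizer of anything in $\Delta$, so pulling back does not help. Neither branch of your argument closes.

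The paper supplies the missing idea: restore two-sidedness by conjugation. Since $R_{-\epsilon}^{\{r,s\}}$ and $R_{-\epsilon}^{\{s,t\}}$ are both opposite to the common residue $R_{\epsilon}^{\{r,s\}}=R_{\epsilon}^{\{s,t\}}$, they have the same type, and by strong transitivity (\cite[Corollary $8.32$]{AB08}) there is $g\in\Stab_{G^{\mathcal{D}'}}(R_{\epsilon}^{\{s,t\}})$ with $g.R_{-\epsilon}^{\{r,s\}}=R_{-\epsilon}^{\{s,t\}}$. Then $g\phi(M_{rs})g^{-1}$ and $\phi(M_{st})$ are each the stabilizer of the same opposite pair (Lemma \ref{Lemma: phi(Mst) stabilizer of opposite residues}), hence equal, and applying $\phi^{-1}$ gives $M_{st}=hM_{rs}h^{-1}$ with $h=\phi^{-1}(g)$. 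Now $M_{st}$ stabilizes both $R_{\{s,t\}}(c_{\epsilon})$ and $h.R_{\{r,s\}}(c_{\epsilon})$ in $\Delta_{\epsilon}$, two distinct spherical residues since their types $\{s,t\}$ and $\{r,s\}$ differ, contradicting the uniqueness clause of Theorem \ref{Theorem: Theorem 4.1 and Proposition 4.2 of CM06}$(a)$. Your instinct to exploit maximal finite subgroups and uniqueness was correct, but the conjugation step is what makes it bite.
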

\begin{proof}
	Assume by contrary $R_{\epsilon}^{\{r, s\}} = R_{\epsilon}^{\{s, t\}}$. As $R_{-\epsilon}^{\{r, s\}}$ is opposite to $R_{\epsilon}^{\{r, s\}}$, they have the same type. In particular, $R_{-\epsilon}^{\{r, s\}}$ and $R_{-\epsilon}^{\{s, t\}}$ have the same type. Now there exists an element $g\in \Stab_{G^{\mathcal{D}'}}(R_{\epsilon}^{\{s, t\}})$ with $g.R_{-\epsilon}^{\{r, s\}} = R_{-\epsilon}^{\{s, t\}}$ (cf.\ \cite[Corollary $8.32$]{AB08}). This implies $\phi(M_{st}) = g \phi(M_{rs}) g^{-1}$ and hence $M_{st} = h M_{rs} h^{-1}$ for $h := \phi^{-1}(g)$. But then we have $M_{st} \leq \Stab_{G^{\mathcal{D}}}(R_{\{s, t\}}(c_{\epsilon})) \cap \Stab_{G^{\mathcal{D}}}( h.R_{\{r, s\}}(c_{\epsilon}) )$, which is a contradiction to Theorem \ref{Theorem: Theorem 4.1 and Proposition 4.2 of CM06}$(a)$ (residues having different types are different).
\end{proof}

\begin{definition}
	Let $\Delta = (\Delta_+, \Delta_-, \delta_*)$ be a twin building of type $(W, S)$ and let $R$ and $T$ be two opposite residues of type $J$. Then $(R, T)$ is called a \emph{twin residue}. It is a basic fact that a twin residue is again a twin building of type $(\langle J \rangle, J)$.
\end{definition}

\begin{lemma}
	Let $s\neq t\in S$, $\epsilon \in \{+, -\}$ and let $\{ \overline{s}, \overline{t} \}$ be the type of $R_{\epsilon}^{\{s, t\}}$. Then there exists a twin apartment $\Sigma_{st}'$ of $\Delta'$ meeting $R_+^{\{s, t\}}$ and $R_-^{\{s, t\}}$, and $\epsilon_{st} \in \{+, -\}$ such that for $r\in \{s, t\}$ we have $\phi( U_{\pm \alpha_r} ) = U_{\pm \epsilon_{st} \alpha_{\overline{r}}}'$ and $\epsilon_{st} \alpha_{\overline{r}} \in \Phi^{\Sigma_{st}'}$.
\end{lemma}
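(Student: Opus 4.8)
The plan is to realise $\phi(M_{st})$ as the group generated by the root groups of the twin residue $(R_+^{\{s,t\}},R_-^{\{s,t\}})$, and then to transport the (rank $1$) root‑group data of $M_{st}$ across $\phi$ using that, over $\FF_2$, the relevant geometric subgroups are detected purely group‑theoretically.

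First I would fix a twin apartment $\Sigma'$ of $\Delta'$ meeting both $R_+^{\{s,t\}}$ and $R_-^{\{s,t\}}$, which exists since these residues are opposite. By Lemma \ref{Lemma: phi(Mst) stabilizer of opposite residues} and Lemma \ref{Lemma: Remy03 Section 6.2.2} we have $\phi(M_{st}) = \Stab_{G^{\mathcal{D}'}}(R_+^{\{s,t\}}) \cap \Stab_{G^{\mathcal{D}'}}(R_-^{\{s,t\}}) = L^{\Sigma'}(R_+^{\{s,t\}}) = \Fix_{G^{\mathcal{D}'}}(\Sigma') \cdot \langle U_\alpha' \mid \alpha \in \Phi^{\Sigma'}(R_+^{\{s,t\}}) \rangle$. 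Since the pointwise fixator of a twin apartment in $G^{\mathcal{D}'}$ is a conjugate of the torus $\bigcap_{\alpha} N_{G^{\mathcal{D}'}}(U_\alpha')$, which is trivial as $\mathcal{D}'$ is centered over $\FF_2$, we get $\Fix_{G^{\mathcal{D}'}}(\Sigma') = \{1\}$ and hence $\phi(M_{st}) = \langle U_\alpha' \mid \alpha \in \Phi^{\Sigma'}(R_+^{\{s,t\}}) \rangle$. Thus $(\phi(M_{st}), (U_\alpha')_\alpha)$ is a centered RGD-system over $\FF_2$ of irreducible spherical type $\{\overline s, \overline t\}$ with associated Moufang twin building $(R_+^{\{s,t\}},R_-^{\{s,t\}})$, and $\phi$ restricts to a group isomorphism of it with the analogous rank $2$ system $(M_{st}, (U_\alpha)_\alpha)$ carried by $(R_{\{s,t\}}(c_+),R_{\{s,t\}}(c_-))$. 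It now suffices to produce a twin apartment $\Sigma_{st}'$ and a sign $\epsilon_{st}$ for which $\phi$ sends $U_{\pm\alpha_s},U_{\pm\alpha_t}$ to the four simple root groups $U_{\pm\epsilon_{st}\alpha_{\overline r}}'$ attached to $\Sigma_{st}'$.

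The decisive point is rigidity over $\FF_2$. For a dihedral group of order $2m$ one computes $\sum_{w} 2^{\ell(w)} = 3(2^m-1)$, which is odd, so in each of the finite groups $M_{st}$ and $\phi(M_{st})$ a chamber stabiliser (a Borel subgroup $U^{\pm}$, the torus being trivial) has order equal to the full $2$-part and is therefore a Sylow $2$-subgroup; as Borel subgroups are self-normalising, the Sylow $2$-subgroups are exactly the chamber stabilisers. Consequently $\phi$ maps chamber stabilisers to chamber stabilisers, and — since adjacency and panel type ($c,c'$ are adjacent iff $\langle \Stab(c),\Stab(c')\rangle$ is a proper parabolic), opposition (two chambers are opposite iff their Borel subgroups intersect trivially) and the codistance are all read off from incidences of these subgroups — it induces a $\phi$-equivariant isomorphism of twin buildings $\bar\phi\colon (R_{\{s,t\}}(c_+), R_{\{s,t\}}(c_-)) \to (R_+^{\{s,t\}}, R_-^{\{s,t\}})$, possibly exchanging the two signs and possibly composed with the diagram automorphism $\overline s \leftrightarrow \overline t$.

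Finally I would read off the root groups. Being a $\phi$-equivariant isomorphism of Moufang twin buildings, $\bar\phi$ carries each Moufang root group to a Moufang root group, so $\phi(U_{\pm\alpha_r}) = U_{\pm\bar\phi(\alpha_r)}'$. Taking $\Sigma_{st}' := \Sigma(\bar\phi(c_+), \bar\phi(c_-))$, the roots $\bar\phi(\alpha_s), \bar\phi(\alpha_t)$ are precisely the simple roots of $\Sigma_{st}'$ of types $\overline s, \overline t$ (after relabelling these two letters so that the types match $\bar\phi$), and a single sign $\epsilon_{st} \in \{+,-\}$ records whether $\bar\phi$ preserves or exchanges the two halves of the twin building; this yields $\phi(U_{\pm\alpha_r}) = U_{\pm\epsilon_{st}\alpha_{\overline r}}'$ with $\epsilon_{st}\alpha_{\overline r} \in \Phi^{\Sigma_{st}'}$. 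The characterisation of the fundamental pair $\{c_+,c_-\}$ through the rank $1$ subgroups $K_r = \langle U_{\alpha_r} \cup U_{-\alpha_r}\rangle$ in Lemma \ref{Lemma: uniqueness of stabilized panels} (together with Lemma \ref{Lemma: no panel is stabilized by all root groups}) is what makes the matching of the simple root groups and the choice of $\Sigma_{st}'$ canonical. The main obstacle is exactly this rank $2$ rigidity over $\FF_2$: a priori an abstract isomorphism could scramble the root-group structure (the exceptional isomorphisms such as $\mathrm{PSL}_2(7) \cong \mathrm{PSL}_3(2)$ live between different types and are ruled out once the type $\{\overline s, \overline t\}$ is fixed), and the Sylow $2$/Borel coincidence is precisely what forces $\phi$ to respect chambers; the remaining work is the bookkeeping of the diagram automorphism and of the sign $\epsilon_{st}$.
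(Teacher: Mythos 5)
Your argument reaches the correct conclusion, but it replaces the paper's key input by a different mechanism, so it is a genuinely different route for the decisive step. The paper first identifies the kernel of the action of $M_{st}$ on its twin residue with $Z(M_{st})$, which is trivial, concludes that $M_{st}\cong\phi(M_{st})$ is one of the pairwise non-isomorphic groups $A_2(2)$, $B_2(2)$, $G_2(2)$, and then invokes Steinberg's theorem on automorphisms of finite Chevalley groups to deduce that $\phi\vert_{M_{st}}$ induces an isomorphism of the rank-$2$ RGD-systems, i.e.\ sends root groups to conjugates of root groups up to a diagram permutation and a global sign; conjugating the twin apartment by the element $x$ then gives the stated normal form. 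You instead prove the rank-$2$ rigidity by hand: over $\FF_2$ the index $[M_{st}:B]=3(2^m-1)$ is odd and the torus is trivial, so Borel subgroups are exactly the Sylow $2$-subgroups; since every overgroup of a Borel is a parabolic and $N(B)=B$, the twin residue (chambers, adjacency, types up to the diagram automorphism, opposition) is reconstructed from the abstract group, and the Moufang property then transports the root groups. This buys independence from Steinberg's classification at the price of more bookkeeping. Two points in your sketch should be made explicit. First, you need the Coxeter types of $R_{\{s,t\}}(c_+)$ and $R_+^{\{s,t\}}$ to agree before one can speak of ``the'' diagram automorphism; this does follow from your own computation, since $\vert M_{st}\vert = 3(2^m-1)\cdot 2^m$ already determines $m\in\{3,4,6\}$, but it is not automatic from ``the type is fixed.'' Second, to upgrade ``$\phi(U_{\pm\alpha_r})$ acts on the residue as the root group $U'_{\pm\epsilon_{st}\alpha_{\overline r}}$'' to an equality of subgroups of $G^{\mathcal{D}'}$, you need that $\phi(M_{st})$ acts faithfully on its twin residue; this is exactly where the paper's identification of the kernel with the (trivial) center enters, and your proposal uses it implicitly when equating chamber stabilizers with Borel subgroups but never states it.
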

\begin{proof}
	We define $M_{st}' := \phi(M_{st})$. We restrict the action of $M_{st}$ (resp.\ $M_{st}'$) to the twin residue $(R_{\{s, t\}}(c_+), R_{\{s, t\}}(c_-))$ (resp.\ $(R_+^{\{s, t\}}, R_-^{\{s, t\}})$). Now \cite[Proposition $8.82(b)$]{AB08} implies that $Z(M_{st})$ coincides with the kernel of the action of $M_{st}$ on the twin residue. The same is true for $M_{st}'$. As $M_{st}$ is center-free (the torus is trivial), we infer that $M_{st}$ is isomorphic to one of the following groups:
	\[ A_2(2), B_2(2), G_2(2). \]
	Note that these groups are pairwise non-isomorphic (cf.\ \cite[Theorem $37$]{St16}). Let $\Sigma_{st'}$ be a twin apartment which meets $R_+^{\{s, t\}}$ and $R_-^{\{s, t\}}$. Now \cite[Theorem $30$]{St16} implies that $\phi \vert_{M_{st}}: M_{st} \to M_{st}'$ induces an isomorphism 
	from the RGD-system $\left( M_{st}, \left( U_{\alpha} \right)_{\alpha \in \Phi^{\Sigma}(R_{\{s, t\}}(c_+))} \right)$ to the RGD-system $\left( M_{st}', \left( U_{\alpha}' \right)_{\alpha \in \Phi^{\Sigma_{st}'}(R_+^{\{s, t\}})} \right)$. This means that there exists an isomorphism $\pi: \langle s, t \rangle \to \langle \overline{s}, \overline{t} \rangle$ with $\pi(\{s, t\}) = \{ \overline{s}, \overline{t} \}$, an element $x\in M_{st}'$ and a sign $\epsilon_{st} \in \{+, -\}$ such that
	\[ \phi(U_{\alpha}) = x U_{\epsilon_{st} \pi(\alpha)}' x^{-1} \]
	where $\epsilon_{st} \pi(\alpha) \in \Phi^{\Sigma_{st}'}$. After replacing $\Sigma_{st}'$ by $x \Sigma_{st}'$, we can assume without loss of generality that $\phi\left( U_{\pm \alpha_r} \right) = U_{\pm \epsilon_{st} \alpha_{\bar{r}}}'$, where $\epsilon \in \{+, -\}$ and $r\in \{s, t\}$.
\end{proof}

\begin{lemma}\label{Lemma: triangle}
	Let $r, s, t\in S$ be pairwise distinct and $\epsilon \in \{+, -\}$. Then the set $\{ R_{\epsilon}^{\{r, s\}}, R_{\epsilon}^{\{r, t\}}, R_{\epsilon}^{\{s, t\}} \}$ is a triangle.
\end{lemma}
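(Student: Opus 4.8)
The plan is to check the two triangle axioms (T1) and (T2) for the set $\{A,B,C\}$, where I abbreviate $A := R_{\epsilon}^{\{r,s\}}$, $B := R_{\epsilon}^{\{r,t\}}$ and $C := R_{\epsilon}^{\{s,t\}}$, using the stabilizers of the rank one groups. For $x\in\{r,s,t\}$ put $H_x := \phi(\langle U_{\alpha_x}\cup U_{-\alpha_x}\rangle)$. Since $\langle U_{\alpha_x}\cup U_{-\alpha_x}\rangle$ is contained in both groups $M_{xy}$ (for the two letters $y\neq x$), the group $H_x$ stabilizes exactly the two residues among $A,B,C$ whose label contains $x$; in particular $H_r$ stabilizes $A$ and $B$, $H_s$ stabilizes $A$ and $C$, and $H_t$ stabilizes $B$ and $C$. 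By the preceding lemma each $H_x$ is, after conjugation, a rank one group over $\FF_2$ of the relevant rank two Moufang residue, so $H_x\cong \Sym(3)$ acts $2$-transitively on each of the three-chamber panels it stabilizes and hence fixes no chamber of such a panel. This puts us precisely into the situation of Lemmas \ref{Lemma: stabilized implies parallel}, \ref{Lemma: no panel is stabilized by all root groups} and \ref{Lemma: uniqueness of stabilized panels}.

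\emph{Verification of (T1).} I show that $\proj_A B$ is a panel, the other five projections being identical. Let $Q\subseteq B$ be the fundamental panel stabilized by $H_r$ (arising from $\Sigma_{rt}'$); then $H_r\leq\Stab(Q)$ and $H_r$ fixes no chamber of $Q$, so Lemma \ref{Lemma: stabilized implies parallel}$(b)$, applied with the residue $A$, shows $\proj_A Q$ is a panel. As $Q\subseteq B$ we get $\proj_A Q\subseteq \proj_A B$, so the residue $\proj_A B$ contains a panel and is therefore either a panel or all of $A$. By Lemma \ref{Lemma: results about parallel residues}$(a)$ the residues $\proj_A B$ and $\proj_B A$ are parallel, hence of equal rank, so it suffices to rule out that $A$ and $B$ are parallel. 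Suppose they were. Pick chambers in $A$ and in $B$ and an apartment $\Sigma'$ of $\Delta_{\epsilon}'$ containing both; then $\Sigma'\cap A\neq\emptyset\neq\Sigma'\cap B$, and Lemma \ref{Lemma: results about parallel residues}$(c)$ yields that $\Sigma'\cap A$ and $\Sigma'\cap B$ are parallel rank two residues of $\Sigma'$. The key observation is that, since $(W',S')$ is $2$-complete and $\tilde{A}_2$-free, every $J\subseteq S'$ with $\vert J\vert\geq 3$ generates an infinite group: for $\vert J\vert = 3$ all three labels are at least $3$, so $\tfrac1{m_a}+\tfrac1{m_b}+\tfrac1{m_c}\leq 1$ with equality only in type $\tilde{A}_2$ (excluded), whence the triangle group is infinite, and larger $J$ contain such a triple. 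Thus Lemma \ref{Lemma: 2-dimensional rank 2 residues not parallel}$(a)$ applies and forces $\Sigma'\cap A=\Sigma'\cap B$; having the same type and a common chamber, $A$ and $B$ coincide, contradicting Lemma \ref{Lemma: different residues}. Hence $A,B$ are not parallel and $\proj_A B$ is a panel.

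\emph{Verification of (T2).} I must show $\proj_A B$ and $\proj_A C$ are not parallel. By the (T1) argument, $\proj_A B=\proj_A Q$ is a panel with $H_r\leq\Stab(\proj_A B)$, and symmetrically $\proj_A C$ is a panel with $H_s\leq\Stab(\proj_A C)$; both lie in the rank two, irreducible, spherical Moufang building $A$ over $\FF_2$, whose fundamental opposite chambers $c_+,c_-$ come from $\Sigma_{rs}'$. Lemma \ref{Lemma: no panel is stabilized by all root groups} then gives that, if $\proj_A B$ and $\proj_A C$ were parallel, they would be opposite in $A$. To exclude this I invoke the $\FF_2$-specific Lemma \ref{Lemma: uniqueness of stabilized panels}: the only chambers of $A$ incident to both an $H_r$-stabilized and an $H_s$-stabilized panel are $c_+$ and $c_-$. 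Combining this with Lemma \ref{Lemma: stabilized implies parallel}$(a)$ (each of $\proj_A B,\proj_A C$ is parallel to the corresponding fundamental panel $\P_{\bar r}(c_+)$, $\P_{\bar s}(c_+)$, and parallel panels of a rank two spherical building are equal or opposite) pins $\proj_A B$ to an end panel of the $\bar r$-wall and $\proj_A C$ to an end panel of the $\bar s$-wall, all of which pass through $c_+$ or $c_-$; a direct inspection in the fundamental apartment (a $2m_{rs}$-gon) shows that an end panel of the $\bar r$-wall is never opposite to an end panel of the $\bar s$-wall. Hence $\proj_A B$ and $\proj_A C$ are not opposite, so not parallel, which is (T2).

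The step I expect to be the main obstacle is exactly the exclusion of the opposite case in (T2): one must control which panels the rank one groups $H_r,H_s$ can stabilize inside the rank two residue, and this is precisely where the hypothesis that all root groups have cardinality $2$ is needed, via Lemma \ref{Lemma: uniqueness of stabilized panels}. By contrast, the reduction of (T1) to the non-parallelism of two distinct rank two residues is clean, because there the target Coxeter system's infiniteness of all rank $\geq 3$ parabolics lets Lemma \ref{Lemma: 2-dimensional rank 2 residues not parallel} do the work; the analogous statement for panels only yields ``opposite'', which is why the finer $\FF_2$ analysis is unavoidable for (T2).
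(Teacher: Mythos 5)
Your verification of (T1) is essentially the paper's own argument (project the $H_r$-stabilized panel of $B$ into $A$ to see that $\proj_A B$ contains a panel, then rule out $\proj_A B = A$ via Lemma \ref{Lemma: results about parallel residues}$(c)$, Lemma \ref{Lemma: 2-dimensional rank 2 residues not parallel}$(a)$ and Lemma \ref{Lemma: different residues}), and it is correct.

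Your argument for (T2), however, is a genuinely different route from the paper's and it contains a real gap. The pivotal assertion is that every panel of $A$ stabilized by $H_r=\phi(\langle U_{\alpha_r}\cup U_{-\alpha_r}\rangle)$ passes through $c_+$ or $c_-$, so that the exclusion of oppositeness reduces to an inspection of the fundamental apartment. None of the cited lemmas gives this: Lemma \ref{Lemma: stabilized implies parallel} together with \cite[Lemma 18]{DMVM11} only tells you that such a panel is equal or opposite to $\P_{\bar r}(c_+)$, and in a thick building most panels opposite to $\P_{\bar r}(c_+)$ do not meet the fundamental apartment at all; Lemma \ref{Lemma: uniqueness of stabilized panels} only constrains chambers lying on \emph{both} an $H_r$-stabilized and an $H_s$-stabilized panel. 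Worse, the assertion is actually false: in $B_2(2)=Sp_4(2)$ the group $\langle t_{e_2},t_{e_3}\rangle$ generated by a pair of opposite simple root groups fixes the three pairwise non-collinear points $e_1$, $e_4$, $e_1+e_4$ of the symplectic quadrangle and hence stabilizes three pairwise opposite point-panels, one of which (at $e_1+e_4$) contains neither $c_+$ nor $c_-$. The conclusion of (T2) can still be saved in the even-$m$ cases ($B_2(2)$, $G_2(2)$) because there opposite panels have equal type while $H_r$- and $H_s$-stabilized panels have the distinct types $\bar r$ and $\bar s$; but in the remaining case $A_2(2)$ ($m=3$, where opposition swaps types) one is left with the sub-case in which both projections are opposite to their fundamental panels, and excluding oppositeness there requires an explicit computation in $\mathrm{PSL}_3(2)$ (showing each $H_r$ stabilizes exactly two panels, one through $c_+$ and one through $c_-$) that your proof neither performs nor cites. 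The paper avoids this local analysis entirely: assuming $\proj_A B$ and $\proj_A C$ are parallel, hence opposite in $A$, it concatenates the compatible paths joining the projection panels through $A$ (Lemma \ref{Lemma: concatenation of compatible paths}) and derives, via Lemma \ref{Lemma: Projection on compatible paths}, that the $H_r$-stabilized and $H_t$-stabilized projection panels inside the third residue $R_{\epsilon}^{\{r,t\}}$ coincide, contradicting Lemma \ref{Lemma: no panel is stabilized by all root groups} there. You would need either to adopt that global argument or to supply the missing rank-one analysis in the $A_2(2)$ case and the type argument in the even cases.
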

\begin{proof}
	We abbreviate $P_s^r := \proj_{R_{\epsilon}^{\{r, s\}}} R_{\epsilon}^{\{s, t\}}$. Note that $\{ \phi(U_{\pm \alpha_s}), \phi(U_{\pm\alpha_t}) \}$ are root groups corresponding to simple roots and their opposites in the twin apartment $\Sigma_{st}'$. We deduce that $\phi(\langle U_{\alpha_s} \cup U_{-\alpha_s} \rangle)$ stabilizes a panel $P_{\epsilon}^{\{s, t\}}$ contained in $R_{\epsilon}^{\{s, t\}}$ and, moreover, $\phi(\langle U_{\alpha_s} \cup U_{-\alpha_s} \rangle)$ acts transitively on $P_{\epsilon}^{\{s, t\}}$. By Lemma \ref{Lemma: stabilized implies parallel}$(b)$ the residue $\proj_{R_{\epsilon}^{\{r, s\}}} P_{\epsilon}^{\{s, t\}}$ is a panel, which is itself stabilized by $\phi\left( \langle U_{\alpha_s} \cup U_{-\alpha_s} \rangle \right)$. Note that $P_s^r$ contains the panel $\proj_{R_{\epsilon}^{\{r, s\}}} P_{\epsilon}^{\{s, t\}}$. By Lemma \ref{Lemma: different residues} we have $R_{\epsilon}^{\{r, s\}} \neq R_{\epsilon}^{\{s, t\}}$. Now Lemma \ref{Lemma: results about parallel residues}$(c)$ and Lemma \ref{Lemma: 2-dimensional rank 2 residues not parallel}$(a)$ yield $P_s^r = \proj_{R_{\epsilon}^{\{r, s\}}} P_{\epsilon}^{\{s, t\}}$ and (T$1$) holds. Note that $P_r^s$ is a panel which is stabilized by $\phi \langle \left( U_{\alpha_r} \cup U_{-\alpha_r} \rangle \right)$.
	
	To show that (T$2$) holds, we assume by contrary that $P_s^r$ and $P_r^s$ are parallel. Then they are opposite in $R_{\epsilon}^{\{r, s\}}$ by Lemma \ref{Lemma: no panel is stabilized by all root groups}. Note that $P_s^t$ and $P_s^r$ (resp.\ $P_r^s$ and $P_r^t$) are parallel by Lemma \ref{Lemma: results about parallel residues}$(a)$. Using Lemma \ref{Lemma: results about parallel panels}$(a)$ and Lemma \ref{Lemma: concatenation of compatible paths} we obtain that the concatenation of compatible paths from $P_s^t$ to $P_s^r$ and from $P_r^s$ to $P_r^t$ (or rather an extension by one panel in $R_{\epsilon}^{\{r, t\}}$ as in Convention \ref{Convention: triangle}) is again a compatible path. We deduce $P_r^t = \proj_{R_{\epsilon}^{\{r, t\}}} P_s^t \subseteq P_t^r$ which is a contradiction to the facts that $P_t^r$ is a panel and $P_r^t \neq P_t^r$ by Lemma \ref{Lemma: no panel is stabilized by all root groups}. Hence $P_s^r$ and $P_r^s$ are not parallel and $\{ R_{\epsilon}^{\{ r, s \}}, R_{\epsilon}^{\{ r, t \}}, R_{\epsilon}^{\{ s, t \}} \}$ is a triangle.
\end{proof}

\begin{lemma}
	For $\epsilon \in \{+, -\}$ we have $\bigcap_{x\neq y \in S} R_{\epsilon}^{\{x, y\}} = \{ d_{\epsilon} \}$ for some chamber $d_{\epsilon} \in \Delta_{\epsilon}'$. Moreover, $d_+$ and $d_-$ are opposite, $\Sigma(d_+, d_-) = \Sigma_{st}'$ and for all $s\neq t \in S$, and $\{ \phi(U_{\pm \alpha_s}), \phi(U_{\pm \alpha_t}) \}$ are the root groups corresponding to the simple roots and their opposites in $\Sigma(d_+, d_-)$.
\end{lemma}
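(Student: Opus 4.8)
The plan is to manufacture the chamber $d_\epsilon$ as the common intersection point of the triangles provided by Lemma \ref{Lemma: triangle}, and then to promote this purely local information first to the global intersection statement and finally to the twin-apartment description. Fix $\epsilon \in \{+,-\}$. By Lemma \ref{Lemma: triangle} every triple of pairwise distinct $r,s,t \in S$ gives a triangle $\{R_\epsilon^{\{r,s\}}, R_\epsilon^{\{r,t\}}, R_\epsilon^{\{s,t\}}\}$, so Theorem \ref{Theorem: 2-complete and A_2 tilde free triangle contains unique chamber} yields a unique chamber $d_\epsilon$ in $R_\epsilon^{\{r,s\}} \cap R_\epsilon^{\{r,t\}} \cap R_\epsilon^{\{s,t\}}$. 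Using the panels produced in the proof of Lemma \ref{Lemma: triangle}, I would record that inside $R_\epsilon^{\{r,s\}}$ the projections $\proj_{R_\epsilon^{\{r,s\}}} R_\epsilon^{\{r,t\}}$ and $\proj_{R_\epsilon^{\{r,s\}}} R_\epsilon^{\{s,t\}}$ are exactly the panels stabilized transitively by $\phi(\langle U_{\alpha_r}\cup U_{-\alpha_r}\rangle)$ and by $\phi(\langle U_{\alpha_s}\cup U_{-\alpha_s}\rangle)$, and that by Theorem \ref{Theorem: intersection of two residues is a chamber} these two panels meet precisely in $d_\epsilon$. In particular, for each generator $s$ of the triple, the panel $\P_{\overline{s}}(d_\epsilon)$ of the appropriate type is stabilized transitively by $\phi(\langle U_{\alpha_s}\cup U_{-\alpha_s}\rangle)$.

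The key reduction is a coherence statement: for fixed distinct $r,s$ the panel $\proj_{R_\epsilon^{\{r,s\}}} R_\epsilon^{\{s,t\}}$ does not depend on the third generator $t$. Granting this, the triangle-chamber of $\{r,s,u\}$ coincides with that of $\{r,s,3\}$, hence equals $d_\epsilon$, which therefore lies in $R_\epsilon^{\{r,u\}}$ and $R_\epsilon^{\{s,u\}}$. Propagating this through the graph whose vertices are the triples of $S$ and whose edges join triples sharing a pair (which is connected since the rank is at least $3$) places $d_\epsilon$ in every $R_\epsilon^{\{x,y\}}$. Since already a single triple forces the intersection of three residues down to one chamber, this gives $\bigcap_{x\neq y} R_\epsilon^{\{x,y\}} = \{d_\epsilon\}$.

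The independence is the step I expect to be the genuine obstacle. Both candidate panels lie in the single parallel class of panels stabilized by $\phi(\langle U_{\alpha_s}\cup U_{-\alpha_s}\rangle)$ (Lemma \ref{Lemma: stabilized implies parallel}(a)). When the Moufang polygon $R_\epsilon^{\{r,s\}}$ is of type $A_2(2)$ this class meets $R_\epsilon^{\{r,s\}}$ in exactly one panel of type $\overline{s}$, so there is nothing to prove; but for type $B_2(2)$ or $G_2(2)$ the group $\phi(\langle U_{\alpha_s}\cup U_{-\alpha_s}\rangle)$ stabilizes \emph{two} opposite panels of type $\overline{s}$ in $R_\epsilon^{\{r,s\}}$, so a priori the projection, and with it $d_\epsilon$, could jump to the opposite chamber as $t$ varies. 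To rule this out I would pin the chamber globally: Lemma \ref{Lemma: uniqueness of stabilized panels} confines the triangle-chamber of $R_\epsilon^{\{r,s\}}$ to the two fundamental chambers of the twin apartment $\Sigma_{rs}'$, Lemma \ref{Lemma: no panel is stabilized by all root groups} records that two distinct stabilized panels of equal type are opposite, and the codistance then decides the sign, because the chambers obtained for $\epsilon = +$ and $\epsilon = -$ must be compatible with the twinning of the twin residue $(R_+^{\{r,s\}}, R_-^{\{r,s\}})$; this selects the fundamental chamber of $\Sigma_{rs}'$ consistently and kills the jump.

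Finally, once $d_\epsilon$ is recognised as the fundamental chamber of $\Sigma_{st}'$ inside $R_\epsilon^{\{s,t\}}$ for all $s \neq t$, the pair $(d_+, d_-)$ is the fundamental opposite pair of $\Sigma_{st}'$, so $d_+$ and $d_-$ are opposite; uniqueness of the twin apartment through an opposite pair forces $\Sigma(d_+, d_-) = \Sigma_{st}'$ for every $s \neq t$, whence all these twin apartments coincide. Reading the identity $\phi(U_{\pm\alpha_r}) = U'_{\pm\epsilon_{st}\alpha_{\overline{r}}}$ with $\epsilon_{st}\alpha_{\overline{r}} \in \Phi^{\Sigma_{st}'}$ from the previous lemma inside the common apartment $\Sigma(d_+, d_-)$, with $d_\pm$ as its fundamental chambers, says precisely that $\{\phi(U_{\pm\alpha_s}), \phi(U_{\pm\alpha_t})\}$ are the root groups of the simple roots and their opposites in $\Sigma(d_+, d_-)$.
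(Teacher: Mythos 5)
Your overall architecture is the same as the paper's: produce the chamber as the triple intersection of the triangles from Lemma \ref{Lemma: triangle} via Theorem \ref{Theorem: 2-complete and A_2 tilde free triangle contains unique chamber}, reduce everything to the coherence statement that the chamber attached to a triple does not move when the third generator varies, and propagate through the connected graph of triples. You also correctly identify coherence as the crux. The gap is that your proposed proof of coherence does not work. Lemma \ref{Lemma: uniqueness of stabilized panels} confines the triangle chamber of every triple containing $\{r,s\}$ to a two-element set $\{e_+, e_-\}$ of opposite chambers of $R_\epsilon^{\{r,s\}}$, and the only global constraint you then invoke is ``compatibility with the twinning'', i.e.\ at best that $d_+^T$ and $d_-^T$ are opposite for each triple $T$. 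But the opposition map of the twin apartment $\Sigma_{rs}'$ carries the fundamental pair of $R_+^{\{r,s\}}$ to the fundamental pair of $R_-^{\{r,s\}}$, so \emph{both} candidate assignments yield opposite pairs; the condition ``$(d_+^T, d_-^T)$ is an opposite pair'' is satisfied either way and cannot prevent the chamber from jumping from $e_+$ to $e_-$ as $t$ varies. There is also a circularity: the opposition of $d_+$ and $d_-$ is something you only derive at the end, from the very recognition of $d_\epsilon$ as ``the'' fundamental chamber that the twinning argument was supposed to justify. (In the paper, opposition of $d_+$ and $d_-$ is obtained independently from $\delta_*'(d_+, d_-) \in \langle r', s' \rangle \cap \langle r', t' \rangle \cap \langle s', t' \rangle = \{1\}$, using that the three panel types at $d_\epsilon$ are pairwise distinct; this neither uses nor implies the coherence you need.)

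The paper closes the gap by a combinatorial argument rather than a sign argument. For two triples $\{a,b,c\}$ and $\{b,c,d\}$ with chambers $d_\epsilon \neq d_\epsilon'$, these are opposite in $R_\epsilon^{\{b,c\}}$ by Lemma \ref{Lemma: uniqueness of stabilized panels}; one then plays the panels $R_\epsilon^{\{a,b\}} \cap R_\epsilon^{\{b,c\}}$ (through $d_\epsilon$) and $R_\epsilon^{\{b,d\}} \cap R_\epsilon^{\{b,c\}}$ (through $d_\epsilon'$) against the non-emptiness of $R_\epsilon^{\{a,b\}} \cap R_\epsilon^{\{a,d\}} \cap R_\epsilon^{\{b,d\}}$, coming from the triangle for the \emph{fourth} pair set $\{a,b,d\}$: if all cross-distances between the two panels are at least $2$, Lemma \ref{Lemma: BiCoxGrowth2.8} forces $R_\epsilon^{\{a,b\}} \cap R_\epsilon^{\{b,d\}} = \emptyset$, a contradiction; if some cross-distance equals $1$, the connecting generator is $c'$ but also lies in $\langle a', b', d' \rangle$, contradicting the pairwise distinctness of the types by \cite[Exercise $2.26$]{AB08}. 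An argument of this kind, using a fourth generator and actual gallery distances rather than stabilizers and signs, is what your proposal is missing.
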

\begin{proof}
	Let $r, s, t \in S$ be pairwise distinct. Then $\{ R_{\epsilon}^{\{ r, s \}}, R_{\epsilon}^{\{ r, t \}}, R_{\epsilon}^{\{ s, t \}} \}$ is a triangle by the previous lemma. Theorem \ref{Theorem: 2-complete and A_2 tilde free triangle contains unique chamber} implies that $R_{\epsilon}^{\{ r, s \}} \cap R_{\epsilon}^{\{ r, t \}} \cap R_{\epsilon}^{\{ s, t \}}$ is non-empty and contains a unique chamber, which we denote by $d_{\epsilon}$. Suppose that $R_{\epsilon}^{\{r, s\}} \cap R_{\epsilon}^{\{s, t\}}$ is just a chamber. Then \cite[Lemma $2.15$]{AB08} would imply that $\proj_{R_{\epsilon}^{\{r, s\}}} R_{\epsilon}^{\{s, t\}}$ is also just a chamber, which is a contradiction to the fact that $\proj_{R_{\epsilon}^{\{r, s\}}} R_{\epsilon}^{\{s, t\}}$ is a panel. This, together with Lemma \ref{Lemma: different residues}, implies that $R_{\epsilon}^{\{r, s\}} \cap R_{\epsilon}^{\{s, t\}}$ is a panel, and it coincides with $\proj_{R_{\epsilon}^{\{s, t\}}} R_{\epsilon}^{\{r, s\}}$. Let $s' \in S'$ be such that $R_{\epsilon}^{\{r, s\}} \cap R_{\epsilon}^{\{s, t\}} = \P_{s'}(d_{\epsilon})$ (similarly for $r'$ and $t'$). Then $r', s', t' \in S'$ are pairwise distinct. Note that $\delta_*'(d_+, d_-) \in \langle s', t' \rangle$, as $R_+^{\{s, t\}}$ and $R_-^{\{s, t\}}$ are opposite and of type $\{s', t'\}$. Using \cite[Exercise $2.26$]{AB08} we deduce
	\[ \delta_*'(d_+, d_-) \in \langle r', s' \rangle \cap \langle r', t' \rangle \cap \langle s', t' \rangle = \{1\}. \]
	This implies that $d_+$ and $d_-$ are opposite. As $\P_{s'}(d_{\epsilon}) = \proj_{R_{\epsilon}^{\{s, t\}}} R_{\epsilon}^{\{r, s\}}$ is stabilized by $\phi(\langle U_{\alpha_s} \cup U_{-\alpha_s} \rangle)$ (cf.\ proof of Lemma \ref{Lemma: triangle}) and $\P_{t'}(d_{\epsilon}) = \proj_{R_{\epsilon}^{\{s, t\}}} R_{\epsilon}^{\{r, t\}}$ is stabilized by $\phi(\langle U_{\alpha_t} \cup U_{-\alpha_t} \rangle)$, it follows from Lemma \ref{Lemma: uniqueness of stabilized panels} that $d_+, d_- \in \Sigma_{st}'$ for all $s\neq t \in S$ and, hence, $\Sigma_{st}' = \Sigma(d_+, d_-)$. Moreover, $\{ \phi(U_{\pm \alpha_s}), \phi(U_{\pm \alpha_t}) \}$ are the root groups corresponding to the simple roots and their opposites in $\Sigma(d_+, d_-)$, i.e.\ there exists $\epsilon_{st} \in \{+, -\}$ and a bijection $\pi: \langle s, t \rangle \to \langle s', t' \rangle$ with $\pi(\{s, t\}) = \{s', t'\}$ such that $\phi(U_{\pm \alpha_r}) = U_{\pm \epsilon_{st} \alpha_{\pi(r)}}'$ for $r\in \{s, t\}$ where $\pm \epsilon_{st} \alpha_{\pi(r)} \in \Phi^{\Sigma(d_+, d_-)}$.
	
	\emph{Claim: If $a, b, c, d \in S$ are pairwise distinct with $R_{\epsilon}^{\{a, b\}} \cap R_{\epsilon}^{\{a, c\}} \cap R_{\epsilon}^{\{b, c\}} = \{ d_{\epsilon} \}$, then $d_{\epsilon} \in R_{\epsilon}^{\{c, d\}}$.}
	
	As before, $R_{\epsilon}^{\{b, c\}} \cap R_{\epsilon}^{\{b, d\}} \cap R_{\epsilon}^{\{c, d\}}$ is non-empty and contains a unique chamber, which we denote by $d_{\epsilon}'$. Note that $R_{\epsilon}^{\{a, b\}} \cap R_{\epsilon}^{\{a, d\}} \cap R_{\epsilon}^{\{b, d\}} \neq \emptyset$. Assume $d_{\epsilon}' \neq d_{\epsilon}$. Then, by Lemma \ref{Lemma: uniqueness of stabilized panels}, we know that $d_{\epsilon}'$ and $d_{\epsilon}$ are opposite in $R_{\epsilon}^{\{b, c\}}$. If $\ell(c_{bd}, c_{ab}) \geq 2$ for all $c_{bd} \in R_{\epsilon}^{\{b, d\}} \cap R_{\epsilon}^{\{b, c\}}$ and $c_{ab} \in R_{\epsilon}^{\{a, b\}} \cap R_{\epsilon}^{\{b, c\}}$, then it follows from Lemma \ref{Lemma: BiCoxGrowth2.8} that $R_{\epsilon}^{\{a, b\}} \cap R_{\epsilon}^{\{b, d\}} = \emptyset$ which is a contradiction. Thus we can assume that there exist $c_{bd} \in R_{\epsilon}^{\{b, d\}} \cap R_{\epsilon}^{\{b, c\}}$ and $c_{ab} \in R_{\epsilon}^{\{a, b\}} \cap R_{\epsilon}^{\{b, c\}}$ with $\ell(c_{bd}, c_{ab}) = 1$. We deduce $\delta_{\epsilon}'(c_{bd}, c_{ab}) = c'$. Let $e\in R_{\epsilon}^{\{a, b\}} \cap R_{\epsilon}^{\{a, d\}} \cap R_{\epsilon}^{\{b, d\}}$. Then we have $\delta_{\epsilon}'(e, c_{bd}) \in \langle b', d' \rangle$ and $\delta_{\epsilon}'(e, c_{ab}) \in \langle a', b' \rangle$. In particular, we have $c' = \delta_{\epsilon}'(c_{bd}, c_{ab}) \in \langle a', b', d' \rangle$ and \cite[Exercise $2.26$]{AB08} implies $c' \in \{ a', b', d' \}$. As $a', b', c'$ as well as $b', c', d'$ are paiswise distinct, we obtain a contradiction.
	
	Now we are in the position to prove the claim. Let $u \neq v \in S$. Note that it suffices to show $d_{\epsilon} \in R_{\epsilon}^{\{u, v\}}$. For $u, v \in \{r, s, t\}$ there is nothing to show. Thus we can assume without loss of generality that $v\notin \{r, s, t\}$. If $u \in \{r, s, t\}$, then $d_{\epsilon} \in R_{\epsilon}^{\{u, v\}}$ follows from the claim and we can assume $u, v \notin \{r, s, t\}$. The claim applied to $r, s, t, u$ implies $R_{\epsilon}^{\{u, s\}} \cap R_{\epsilon}^{\{u, t\}} \cap R_{\epsilon}^{\{s, t\}} = \{ d_{\epsilon} \}$. Applying the claim now to $s, t, u, v$, we infer $d_{\epsilon} \in R_{\epsilon}^{\{u, v\}}$.
\end{proof}

\begin{theorem}\label{Theorem: Main theorem}
	$\phi$ induces an isomorphism between the RGD-systems $\mathcal{D}$ and $\mathcal{D}'$.
\end{theorem}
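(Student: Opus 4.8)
The plan is to verify the hypothesis of Theorem \ref{Theorem: Theorem 2.2 in CM05b}: I must produce a single element $x\in G^{\mathcal{D}'}$ with $\{\phi(U_\alpha)\mid\alpha\in\Phi\}=\{xU_\gamma'x^{-1}\mid\gamma\in\Phi'\}$. Since $(W,S)$ and $(W',S')$ are $2$-complete of rank $\geq 3$, their diagrams are complete, hence connected, so both types are irreducible and the cited theorem will apply. By strong transitivity of $G^{\mathcal{D}'}$ on pairs of opposite chambers I first choose $x$ carrying the fundamental twin apartment of $\Delta'$ onto $\Sigma':=\Sigma(d_+,d_-)$, where $d_+,d_-$ are the opposite chambers produced by the preceding lemma; then $\{xU_\gamma'x^{-1}\mid\gamma\in\Phi'\}$ is exactly the family $(V_\delta)_{\delta\in\Phi^{\Sigma'}}$ of root groups of $\Delta'$ attached to $\Sigma'$. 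The preceding lemma supplies the base case: for every $s\in S$ the subgroups $\phi(U_{\pm\alpha_s})$ are the two root groups of $\Sigma'$ attached to the simple panel $\P_{\pi(s)}(d_\epsilon)$, for a well-defined map $\pi\colon S\to S'$ with $m_{st}=m'_{\pi(s)\pi(t)}$; writing $\phi(U_{\alpha_s})=V_{\gamma(s)}$ with $\gamma(s)=\pm\alpha_{\pi(s)}$. As $\pi$ is injective and the argument applied to $\phi^{-1}$ gives the symmetric injection, $\pi$ is a bijection extending to a Coxeter isomorphism $\pi\colon W\to W'$.

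It remains to propagate from simple roots to all roots. Since $\mathcal{D}$ is centered with trivial torus, the subgroup $N:=\langle m(u_s)\mid s\in S\rangle$, where $u_s$ is the nontrivial element of $U_{\alpha_s}$, maps onto $W$, and by axiom (RGD$2$) every root group $U_\alpha$ with $\alpha=w\alpha_s$ equals $nU_{\alpha_s}n^{-1}$ for a lift $n\in N$ of $w$; hence $\{U_\alpha\mid\alpha\in\Phi\}$ is precisely the set of $N$-conjugates of the simple root groups. The key point is that $\phi$ carries Weyl lifts to Weyl lifts: because every root group has order $2$, the element $m(u_s)$, which over $\FF_2$ equals $v_su_sv_s$ with $v_s$ the nontrivial element of $U_{-\alpha_s}$, is sent by $\phi$ to $\phi(v_s)\phi(u_s)\phi(v_s)$, and this is exactly the $m$-element $m'(\phi(u_s))$ of the order-$2$ root group $\phi(U_{\alpha_s})=V_{\gamma(s)}$ of $\Delta'$. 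By the target (RGD$2$) it conjugates $V_\delta$ onto $V_{r_{\gamma(s)}\delta}$, where $r_{\gamma(s)}$ is the reflection of $\Sigma'$ through $\P_{\pi(s)}(d_\epsilon)$, i.e.\ the simple reflection $\pi(s)\in W'$; note that this reflection, and hence the action on the $V_\delta$, is independent of the sign of $\gamma(s)$.

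Consequently $\phi(N)=\langle m'(\phi(u_s))\mid s\in S\rangle$ acts on $(V_\delta)_{\delta\in\Phi^{\Sigma'}}$ through $\pi(W)=W'$, so (using that $\mathcal{D}'$ also has trivial torus) it is the full Weyl lift group of $\mathcal{D}'$ relative to $\Sigma'$. Applying $\phi$ to the description of $\{U_\alpha\}$ above then gives
\[ \{\phi(U_\alpha)\mid\alpha\in\Phi\}=\{\phi(n)\,V_{\gamma(s)}\,\phi(n)^{-1}\mid n\in N,\ s\in S\}, \]
and since $\phi(N)$ realizes all of $W'$, $\pi$ is onto $S'$, and each root is $W'$-conjugate to its opposite, the right-hand side exhausts $\{V_\delta\mid\delta\in\Phi^{\Sigma'}\}=\{xU_\gamma'x^{-1}\mid\gamma\in\Phi'\}$. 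This is the hypothesis of Theorem \ref{Theorem: Theorem 2.2 in CM05b}, which yields that $\phi$ induces an isomorphism of $\mathcal{D}$ onto $\mathcal{D}'$. The main obstacle is the middle step --- showing that $\phi$ sends the $m$-elements to $m$-elements, so that conjugation by $\phi(N)$ implements the $W'$-action on the root groups of $\Sigma'$; this is precisely where the hypothesis that all root groups have cardinality $2$ is used, since it forces $m(u_s)=v_su_sv_s$ and makes the rank-one restriction of $\phi$ automatically an isomorphism of RGD-systems.
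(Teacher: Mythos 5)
Your proposal is correct and follows essentially the same route as the paper: both arguments take the simple root groups handled by the preceding lemma, show that $\phi$ carries the elements $m(u_s)$ to the corresponding $m$-elements of $\mathcal{D}'$ (you via the forced form $m(u_s)=v_su_sv_s$ over $\FF_2$, the paper via the uniqueness of the decomposition $u'uu''$ -- the same mechanism), propagate by conjugation to all roots, and then invoke Theorem \ref{Theorem: Theorem 2.2 in CM05b}. Your observation that the sign consistency of the $\epsilon_{st}$ is not actually needed for the set equality is a minor simplification of the paper's write-up, but the substance of the proof is identical.
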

\begin{proof}
	Recall from the proof of the previous lemma that for all $s\neq t \in S$ there exists an isomorphism $\pi_{st}: \langle s, t \rangle \to \langle s', t' \rangle$ with $\pi(\{s, t\}) = \{s', t'\}$ and $\epsilon_{st} \in \{+, -\}$ such that $\phi(U_{\pm \alpha_s}) = U_{\pm \epsilon_{st} \alpha_{\pi_{st}(s)}}'$ and $\phi(U_{\pm \alpha_t}) = U_{\pm \epsilon_{st} \alpha_{\pi_{st}(t)}}'$ where $\pm \epsilon_{st} \alpha_{\pi_{st}(s)}, \pm \epsilon_{st} \alpha_{\pi_{st}(t)} \in \Phi^{\Sigma(d_+, d_-)}$.
	
	Let $a, b, c, d \in S$.	We first see that $\epsilon_{ab} = \epsilon_{ac}$ for all $a, b, c \in S$ pairwise distinct. Now it follows directly, that $\epsilon_{ab} = \epsilon_{ac} = \epsilon_{cd}$. We define $\epsilon := \epsilon_{st}$ for some $s\neq t \in S$. Note that $\pi_{st}$ extends to an isomorphism $\pi: W \to W'$ with $\pi(S) = S'$.
	
	\emph{Claim: We have $\{ \phi(U_{\alpha}) \mid \alpha \in \Phi^{\Sigma} \} = \{ U_{\alpha}' \mid \alpha \in \Phi^{\Sigma(d_+, d_-)} \}$.}
	
	Note that for each $u\in U_{\pm\alpha_s}$ with $u\neq 1$ there are unique elements $u', u'' \in U_{\mp\alpha_s}$ such that $m(u) = u'uu''$ conjugates $U_{\beta}$ onto $U_{s\beta}$ for each $\beta \in \Phi$. This implies
	\[ U_{-\epsilon \alpha_{\pi(s)}}' = \phi( U_{-\alpha_s} ) = \phi( m(u)^{-1} U_{\alpha_s} m(u) ) = \phi(m(u))^{-1} U_{\epsilon \alpha_{\pi(s)}}' \phi(m(u)). \] 
	We infer $\phi(m(u)) = m(\phi(u))$. Let $\alpha \in \Phi$ with $\alpha = s_1 \cdots s_k \alpha_s$ for some $s_1, \ldots, s_k, s\in S$, then $\phi$ maps $U_{\alpha} = U_{\alpha_s}^{m(u_k) \cdots m(u_1)}$ to $(U_{\epsilon \alpha_{\pi(s)}}')^{m(\phi(u_k)) \cdots m(\phi(u_1))} = U_{\epsilon\pi(s_1) \cdots \pi(s_k) \alpha_{\pi(s)}}$.
	
	Let $\Sigma'$ be the fundamental twin apartment of $\Delta'$. As $G^{\mathcal{D}'}$ acts strongly transitively on $\Delta'$, there exists $x\in G^{\mathcal{D}'}$ with $\{ U_{\alpha}' \mid \alpha \in \Phi^{\Sigma(d_+, d_-)} \} = \{ xU_{\alpha}' x^{-1} \mid \alpha \in \Phi^{\Sigma'} \}$. Now the claim follows from the claim together with Theorem \ref{Theorem: Theorem 2.2 in CM05b}.
\end{proof}

\bibliographystyle{amsalpha}
\bibliography{references}
\end{document}